\documentclass[a4paper, oneside, 10pt, final]{amsart}
\usepackage[top=2.5cm, bottom=2cm, inner=2.5cm, outer=2.5cm, foot=1cm]{geometry}
\usepackage{amssymb,amsfonts,microtype}
\usepackage[utf8]{inputenc}
\usepackage[all, 2cell]{xy} \UseAllTwocells \SilentMatrices
\usepackage{lmodern}
\usepackage{svninfo}
\usepackage[bookmarks=true,colorlinks=true,linkcolor=blue,pdfa=true]{hyperref}
\pdfobjcompresslevel=0
\usepackage[notcite,notref]{showkeys}

\setcounter{tocdepth}{1}

\hypersetup{pdftitle={A precise result on the arithmetic of non-principal orders in algebraic number fields},pdfauthor={Andreas Philipp}}
\usepackage{aliascnt}
\let\orgautoref\autoref
\renewcommand{\autoref}
        {%
\def\corollaryautorefname{Corollary}%
\def\definitionautorefname{Definition}%
\def\lemmaautorefname{Lemma}%
\def\propositionautorefname{Proposition}%
\def\exampleautorefname{Example}%
\def\remarkautorefname{Remark}%
\def\chapterautorefname{Chapter}%
\def\sectionautorefname{Section}%
\def\subsectionautorefname{Subsection}%
\def\subsubsectionautorefname{Subsubsection}%
\def\algorithmautorefname{Algorithm}%
         \orgautoref}

\newtheorem*{theorem*}{Theorem}
\newtheorem{theorem}{Theorem}[section]
\newaliascnt{lemma}{theorem}
\newtheorem{lemma}[lemma]{Lemma}
\aliascntresetthe{lemma}
\newaliascnt{corollary}{theorem}
\newtheorem{corollary}[corollary]{Corollary}
\aliascntresetthe{corollary}
\newaliascnt{proposition}{theorem}
\newtheorem{proposition}[proposition]{Proposition}
\aliascntresetthe{proposition}
\theoremstyle{definition}
\newaliascnt{definition}{theorem}
\newtheorem{definition}[definition]{Definition}
\aliascntresetthe{definition}
\newaliascnt{remark}{theorem}

\aliascntresetthe{remark}
\newaliascnt{notation}{theorem}

\aliascntresetthe{notation}
\newaliascnt{situation}{theorem}

\aliascntresetthe{situation}
\newaliascnt{example}{theorem}
\newtheorem{example}[example]{Example}
\aliascntresetthe{example}

\newcommand{\N}{\mathbb N}

\newcommand{\Z}{\mathbb Z}
\newcommand{\Q}{\mathbb Q}

\DeclareMathOperator{\spec}{spec}

\DeclareMathOperator{\ord}{ord}
\DeclareMathOperator{\pic}{Pic}
\DeclareMathOperator{\red}{red}
\newcommand{\cat}{\mathsf c}

\newcommand{\cmon}{\cat_{\mathrm{mon}}}

\newcommand{\Req}{\mathcal R_{\mathrm{eq}}}
\newcommand{\equ}{\approx_{\mathrm{eq}}}
\newcommand{\eq}{\approx_{\mathrm{eq}}}

\newcommand{\punkt}[1]{#1^\bullet}
\newcommand{\mal}[1]{#1^\times}
\newcommand{\mdots}{\cdot\ldots\cdot}

\newcommand{\wmal}[1]{\mal{\widehat{#1}}}
\newcommand{\eps}{\varepsilon}

\def\clap#1{\hbox to 0pt{\hss#1\hss}}
\def\mathclap{\mathpalette\mathclapinternal}
\def\mathclapinternal#1#2{%
\clap{$\mathsurround=0pt#1{#2}$}%
}

\numberwithin{equation}{section}

\makeatletter
\renewcommand{\p@enumii}{}
\makeatother

\begin{document}
\svnInfo $Id: precise_orders.tex 2262 2011-04-20 08:26:24Z aph $
\svnKeyword $HeadURL: svn+ssh://aph@aph.homelinux.net/var/svn/notes/publications/submitted/precise_orders/precise_orders.tex $

\address{Institut f\"ur Mathematik und Wissenschaftliches Rechnen \\
Karl-Franzens-Universit\"at Graz \\
Heinrichstra\ss e 36\\
8010 Graz, Austria} \email{andreas.philipp@uni-graz.at}

\thanks{I thank my Ph.D. thesis advisors Prof. Franz Halter-Koch and Prof. Alfred Geroldinger for all the help, advice, and mathematical discussions during my thesis which led to all results in this article.}

\author{Andreas Philipp}

\keywords{non-unique factorizations, half-factoriality, non-principal orders, algebraic number fields}

\subjclass[2010]{11R27, 13A05, 13F15, 20M13}

\begin{abstract}
Let $R$ be an order in an algebraic number field. If $R$ is a principal order, then many explicit results on its arithmetic are available. Among others, $R$ is half-factorial if and only if the class group of $R$ has at most two elements. Much less is known for non-principal orders. Using  a new semigroup theoretical approach, we study half-factoriality and further arithmetical properties for non-principal orders in algebraic number fields.
\end{abstract}

\title[Arithmetic of non-principal orders]{A precise result on the arithmetic of non-principal orders in algebraic number fields}

\maketitle

\bigskip
\section{Introduction and Main Result}
\bigskip

Let $R$ be a noetherian domain. Then every non-zero non-unit  $a \in R$ can be written as a finite product of atoms, say $a = u_1 \cdot\ldots \cdot u_k$. In general, $a$ has many essentially different factorizations into atoms. The non-uniqueness of factorizations of elements in $R$ is measured by arithmetical invariants. For convenience, we briefly recall the definition of two classical invariants, the elasticity and the set of distances (details will be given in \autoref{sec:pre}). In a factorization of an element $a \in R$ as above, the number of factors $k$ is called the length of the factorization. Then the elasticity $\rho (a) \in \mathbb R_{\ge 1}\cup \{\infty\}$ is defined as the supremum over all $k/l$ where $k$ and $l$ are lengths of factorizations of $a$. Suppose that $a = u_1\cdot \ldots\cdot u_k = v_1 \cdot \ldots \cdot v_l$, where $k < l$ and all $u_i$ and all $v_j$ are atoms of $R$. If $a$ has no factorizations of length $m$ with $k < m < l$, then $l-k$ is said to be a distance of two (successive) factorization lengths, and $\Delta (a) \subset \N$ is the set of all such distances. The elasticity $\rho (R)$ is the supremum over all $\rho (a)$, and the set of distances $\Delta (R)$ is the union of all $\Delta (a)$. Then $\rho (R) = 1$ if and only if $\Delta (R) = \emptyset$, and in this case $R$ is said to be half-factorial.

\smallskip
In the last decade, abstract finiteness results for arithmetical
invariants have been derived for large classes of noetherian domains
(see \cite[Theorem 2.11.9]{MR2194494}, or \cite{Ka11,MR2123000} for
recent progress). If the noetherian domain is integrally closed,
then it is a Krull domain, and if in addition every divisor class
contains a prime divisor, then methods from additive and
combinatorial number theory allow one to obtain precise results on the
arithmetic (see \cite{MR2547479} for the role of combinatorial number
theory in this context). By a precise result, we mean an explicit
formula, say for the elasticity, in terms of the group invariants of
the class group, or an explicit characterization of the extremal
cases, say  $\rho (R) = 1$, which asks,  in other words, for an
explicit characterization of half-factoriality.

\smallskip
Half-factoriality has been a central topic ever since the beginning
of factorization theory (see the surveys \cite{MR1858159, MR2140686, MR2228388}, and \cite{MR1460780,MR1966521,MR2202351,MR2483834} for some recent results). A
classical result due to Carlitz states that a ring of integers is
half-factorial if and only if its class group has at most two
elements (see \cite{MR0111741}; there are analogous results for Krull
monoids, but for simplicity we restrict our discussion here to rings
of integers). If $R$ is  a ring of integers in an algebraic number
field, then, for almost all elements $a \in R$, we have $\Delta (a) =
\{1\}$, and hence their sets of lengths are arithmetical
progressions with difference $1$ (see \cite[Theorem 9.4.11]{MR2194494}). Precise results of such a type for non-principal
orders are extremely rare. In contrast to the above density result
for principal orders, it is even open whether a non-principal order
contains a single element $a$ with $1 \in \Delta (a)$. In 1984, F.
Halter-Koch gave a characterization of half-factoriality for
non-principal orders in quadratic number fields (see \cite[Theorem 3.7.15]{MR2194494}, or \cite{MR781157}), but the general case remained
wide open (\cite{MR2140704,MR1770474}).

\smallskip
The present paper is devoted to non-principal orders in algebraic number fields and studies half-factoriality and the question whether $1$ occurs in the set of distances. Here is our main result.

\begin{theorem}
\label{main}
Let $\mathcal O$ be a non-principal, locally half-factorial order in an algebraic number field and set $\mathcal P^*=\lbrace\mathfrak p\in\mathfrak X(\mathcal O)\mid\mathfrak p\supset(\mathcal O:\overline{\mathcal O})\rbrace$.\\
\begin{enumerate}
 \item\label{main1} If $|\pic(\mathcal O)|=1$, then $\mathcal O$ is half-factorial.
 \item\label{main2} If $|\pic(\mathcal O)|\geq 3$, then $(\mathsf D(\pic(\mathcal O)))^2\geq\mathsf c(\mathcal O)\geq 3$, $\min\triangle(\mathcal O)=1$, and $\rho(\mathcal O)>1$.
 \item\label{main3} If $|\pic(\mathcal O)|=2$, then $\rho(\mathcal O)\leq 2$, $2\leq\mathsf c(\mathcal O)\leq 4$, and $\min\triangle(\mathcal O)\leq 1$.\\
If, additionally, all localizations of $\mathcal O$ are finitely primary monoids of exponent $1$, then, setting $k=\#\lbrace\mathfrak p\in\mathcal P^*\mid[\mal{\overline{\mathcal O}_{\mathfrak p}}/\mal{\mathcal O_{\mathfrak p}}]_{\pic(\mathcal O)}=\pic(\mathcal O)\rbrace$, it follows that
\begin{itemize}
 \item $\cmon(\mathcal O)=\mathsf c(\mathcal O)=2+\min\lbrace 2,k\rbrace\in\lbrace 2,3,4\rbrace$;
 \item $\rho(\mathcal O)=\frac{1}{2}\mathsf c(\mathcal O)\in\lbrace 1,\frac{3}{2},2\rbrace$; 
 \item $\triangle(\mathcal O)=[1,\mathsf c(\mathcal O)-2]\subset[1,2]$;
\end{itemize}
and the following are equivalent:
\begin{itemize}
 \item $\cmon(\mathcal O)=2$.
 \item $\mathsf c(\mathcal O)=2$.
 \item $\mathcal O$ is half-factorial.
\end{itemize}
If, additionally, $[\mathfrak p]=\mathbf 0_{\pic(\mathcal O)}$ for all $\mathfrak p\in\mathcal P^*$, then the following is also equivalent:
\begin{itemize}
 \item $\mathsf t(\mathcal O)=2$.
\end{itemize}
\end{enumerate}
In particular, $\min\triangle(\mathcal O)\leq 1$ always holds.
\end{theorem}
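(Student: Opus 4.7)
My plan is to pass from the order $\mathcal O$ to a suitable auxiliary monoid that encodes the arithmetic of $\mathcal O$ but is easier to analyze combinatorially. The standard machinery for orders replaces the Krull-monoid/class-group picture by the monoid $\mathcal O^\bullet$, its normalization $\overline{\mathcal O}^\bullet$, and the "regular part" controlled by the localizations $\mathcal O_{\mathfrak p}^\bullet$ at the primes $\mathfrak p\in\mathcal P^*$ above the conductor. The local half-factoriality hypothesis makes each $\mathcal O_{\mathfrak p}^\bullet$ a half-factorial finitely primary monoid; splitting off the primes outside $\mathcal P^*$ yields a divisor homomorphism from $\mathcal O^\bullet$ into a product of a factorial monoid and a $T$-block monoid $\mathcal B$ whose abelian group is $\pic(\mathcal O)$ and whose $T$-component records the units modulo local units at the primes in $\mathcal P^*$. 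All invariants in the statement (catenary degree $\mathsf c$, monotone catenary degree $\cmon$, elasticity $\rho$, set of distances $\triangle$, tame degree $\mathsf t$) transfer between $\mathcal O^\bullet$ and $\mathcal B$ because the transfer homomorphism has the usual "lifting of factorizations" property.

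For part \eqref{main1}, once $\pic(\mathcal O)$ is trivial, $\mathcal B$ degenerates to a free monoid over the $T$-component, and local half-factoriality of each finitely primary factor forces $\mathcal B$ itself to be half-factorial; hence so is $\mathcal O$. For part \eqref{main2} with $|\pic(\mathcal O)|\geq 3$, the key is to use the combinatorial Davenport constant bound $\mathsf c(\mathcal B)\leq \mathsf D(\pic(\mathcal O))^2$, which is already available for $T$-block monoids, together with the construction of two factorizations of a concrete element of $\mathcal B$ whose lengths differ by exactly $1$. For the latter I plan to pick $g\in\pic(\mathcal O)$ of order $n\geq 3$ and exhibit two atomic decompositions of a product combining $g$, $-g$ and a pair $h,-h$ so that a distance $1$ and a length ratio strictly above $1$ appear; this simultaneously gives $\min\triangle=1$, $\rho>1$, and the lower bound $\mathsf c\geq 3$.

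For part \eqref{main3} with $|\pic(\mathcal O)|=2$, the bound $\rho\leq 2$ is forced because every non-zero element of $\pic(\mathcal O)$ has order $2$, so any two atoms of $\mathcal B$ coming from non-trivial classes multiply to a product of two trivial-class atoms, giving $\mathsf D(\pic(\mathcal O))=2$ and hence $\mathsf c\leq 4$. The refinement under the finitely-primary-exponent-$1$ hypothesis is the most delicate step: here I would make the $T$-component completely explicit. The number $k$ measures how many of the primes in $\mathcal P^*$ contribute a unit class that generates $\pic(\mathcal O)$; I expect the equality $\mathsf c(\mathcal O)=2+\min\{2,k\}$ to come from a careful case distinction on $k\in\{0,1,\geq 2\}$, in each case producing a complete list of relations between atoms of $\mathcal B$ and deducing from it the catenary degree, elasticity, the exact set of distances $[1,\mathsf c-2]$, and the coincidence of $\mathsf c$ and $\cmon$. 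The equivalences involving $\mathsf c(\mathcal O)=2$ and half-factoriality reduce to the observation that in this setting $\mathsf c=2$ forces every relation to be trivial. Finally, the last equivalence with $\mathsf t(\mathcal O)=2$ requires the extra hypothesis $[\mathfrak p]=\mathbf 0$ for all $\mathfrak p\in\mathcal P^*$ so that the distinguished prime elements in $\mathcal B$ all lie in the trivial class, making the tame degree computable prime by prime.

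The main obstacle I foresee is part \eqref{main3} under the additional hypotheses, namely pinning down the exact catenary degree as $2+\min\{2,k\}$. This requires a complete classification of minimal relations among atoms of $\mathcal B$ and showing that no "long" relation can be avoided whenever $k\geq 2$; the combinatorics of mixing atoms of the form $g+g$, $g+t_{\mathfrak p}$, $t_{\mathfrak p}+t_{\mathfrak q}$ (with $g$ the non-trivial class and $t_{\mathfrak p}$ encoding a generator of $\mal{\overline{\mathcal O}_{\mathfrak p}}/\mal{\mathcal O_{\mathfrak p}}$) has to be carried out carefully. The concluding assertion $\min\triangle(\mathcal O)\leq 1$ is then immediate: by \eqref{main1} $\triangle=\emptyset$, by \eqref{main2} $\min\triangle=1$, and by \eqref{main3} $\min\triangle\leq 1$.
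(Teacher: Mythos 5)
Your overall architecture matches the paper's: transfer $\mathcal O^\bullet$ to a $T$-block monoid over $\pic(\mathcal O)$ whose $T$-part is the product of the reduced localizations at the conductor primes, then case-split on $|\pic(\mathcal O)|$. Parts \eqref{main1} and \eqref{main2} are fine at this level of detail. But there are two genuine gaps in part \eqref{main3}. First, your claim that $\mathsf D(\pic(\mathcal O))=2$ ``and hence $\mathsf c(\mathcal O)\leq 4$'' does not follow from the standard machinery. The monoid $\mathcal O^\bullet_{\mathrm{red}}$ is saturated in $D=\mathcal F(P)\times D_1\times\ldots\times D_r$ where the $D_i$ are half-factorial finitely primary monoids that may have type $(1,2)$, so $\mathsf c(D)$ can equal $3$; the general bound for faithfully saturated submonoids, $\mathsf c(H)\leq\rho(H,D)\max\lbrace\mathsf c(D),\mathsf D(G)\rbrace\mathsf D(G)$, then gives only $\mathsf c\leq 6$. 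The paper has to prove a new refinement (its Lemma \ref{3.14}), exploiting that $D$ is half-factorial and $\rho(H,D)=1$, to push this down to $4$; nothing in your plan supplies that step, and without it the asserted inequality $\mathsf c(\mathcal O)\leq 4$ in the general $|\pic(\mathcal O)|=2$ case is unproved.

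Second, the entire refined statement under the exponent-$1$ hypothesis — the exact values $\mathsf c=\cmon=2+\min\lbrace 2,k\rbrace$, $\rho=\frac12\mathsf c$, $\triangle=[1,\mathsf c-2]$, and the tame-degree equivalence — is only announced, and you yourself flag it as the main obstacle. This is where essentially all the work lies: one needs a complete classification of the minimal relations among atoms of the block monoid (the paper lists fifteen families), a separate and lengthy argument bounding the \emph{monotone} catenary degree by the lengths of short relations (the paper's Lemma \ref{3.17}, which also yields $\rho=\frac32$ when $k=1$ by bounding how many length-increasing steps a monotone chain can contain), and a dedicated argument for the tame degree. On the last point, be aware that the tame degree does \emph{not} transfer cleanly through the block homomorphism ($\mathsf t(\mathcal B)=2$ does not imply $\mathsf t(H)=2$ in general — the paper gives a counterexample), so ``computable prime by prime'' needs the specific product structure of $T$ and the hypothesis $[\mathfrak p]=\mathbf 0$; this subtlety is absent from your sketch. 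As it stands the proposal is a correct roadmap that coincides with the paper's route, but the decisive estimates are missing.
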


\medskip
Recall that $\mathcal O$ is called locally half-factorial if the
localizations $\mathcal O_{\mathfrak p}$ are half-factorial for all
non-zero prime ideals $\mathfrak p $ of $\mathcal O$. It is the
standing conjecture that all half-factorial orders are locally
half-factorial, and this holds true for orders in quadratic and
cubic number fields. In particular, the above theorem yields the
classical result of F. Halter-Koch as a corollary (see \autoref{4.7}). We will see that the most difficult case is $|\pic (\mathcal O)| = 2$, and that the other ones are quite easy.

\medskip
We briefly sketch our approach. We proceed in two steps. The first
one is fairly standard in this area. We consider $\mathcal O$, the
set of invertible ideals $\mathcal I^* ( \mathcal O)$, and construct
the associated $T$-block monoid $\mathcal B (G, T, \iota)$. Then all
questions under consideration can be studied in the $T$-block monoid
instead of in $\mathcal O$ (see \autoref{sec:proof} for this transfer
process). The second step contains the main new idea behind
the present progress. In a series of recent papers (see for example
\cite{garger10,MR2494887,MR2243561}), arithmetical invariants
of a monoid have been characterized in abstract semigroup
theoretical terms, such as the  monoid of relations and
presentations. Of course, these semigroup theoretical invariants are
far beyond reach in the case of non-principal orders. However, the
$T$-block monoid $\mathcal B (G, T, \iota)$ has such simple
constituents that these characterizations can be used to determine
the arithmetical invariants exactly. These local results can be put together
to get information for the whole $T$-block monoid $\mathcal B (G, T,
\iota)$, and then all this is shifted to $\mathcal O$. Our crucial
technical results are formulated in \autoref{3.17} and \autoref{3.18},
which are based on \cite{phil10} and \cite{phil11a}.

\medskip
In \autoref{sec:pre}, we recall the relevant concepts from factorization theory and some abstract concepts from semigroup theory. In \autoref{sec:proof}, we introduce $T$-block monoids and the associated transfer homomorphisms. The main work is to prove the already mentioned technical results \autoref{3.17} and \autoref{3.18}. The proof of our main result, \autoref{main}, will be given at the end of \autoref{sec:proof}.

\bigskip
\section{Preliminaries}
\label{sec:pre}
\bigskip

In this note, our notation and terminology will be consistent with \cite{MR2194494}. Let $\N$ denote the set of positive integers and let $\N_0=\N\uplus\lbrace 0\rbrace$. For integers $n,\,m\in\Z$, we set $[n,m]=\lbrace x\in\Z\mid n\leq x\leq m\rbrace$. By convention, the supremum of the empty set is zero and we set $\frac{0}{0}=1$. The term ``monoid'' always means a commutative, cancellative semigroup with unit element. We will write all monoids multiplicatively. For a monoid $H$, we denote by $H^\times$ the set of invertible elements of $H$. We call $H$ reduced if $H^\times=\lbrace 1\rbrace$ and call $H_{\mathrm{red}}=H/H^\times$ the reduced monoid associated with $H$. Of course, $H_{\mathrm{red}}$ is always reduced.
Note that the arithmetic of $H$ is determined by $H_{\mathrm{red}}$, and therefore we can restrict to reduced monoids whenever convenient. We denote by $\mathcal A(H)$ the \emph{set of atoms} of $H$, by $\mathcal A(H_{\mathrm{red}})$ the set of atoms of the associated reduced monoid $H_{\mathrm{red}}$, by $\mathsf Z(H)=\mathcal F(\mathcal A(H_{\mathrm{red}}))$ the free (abelian) monoid with basis $\mathcal A(H_{\mathrm{red}})$, and by $\pi_H:\mathsf Z(H)\rightarrow H_{\mathrm{red}}$ the unique homomorphism such that $\pi_H|\mathcal A(H_{\mathrm{red}})=\mathrm{id}$. We call $\mathsf Z(H)$ the \emph{factorization monoid} and $\pi_H$ the \emph{factorization homomorphism} of $H$. For $a\in H$, we denote by $\mathsf Z(a)=\pi_H^{-1}(a\mal H)$ the \emph{set of factorizations} of $a$ and denote by $\mathsf L(a)=\lbrace |z|\mid z\in\mathsf Z(a)\rbrace$ the \emph{set of lengths} of $a$, where $|\cdot|$ is the ordinary length function in the free monoid $\mathsf Z(H)$. In this terminology, a monoid $H$ is called \emph{half-factorial} if $|\mathsf L(a)|=1$ for all $a\in H\setminus\mal H$---this coincides with the classical definition of being half-factorial, since then every two factorizations of an element have the same length---and \emph{factorial} if $|\mathsf Z(a)|=1$ for all $a\in H\setminus\mal H$.

With all these notions at hand, for $a\in H$, we set
\[
\rho(a)=\frac{\sup\mathsf L(a)}{\min\mathsf L(a)}\:\mbox{ and call }\:
\rho(H)=\sup\lbrace\rho(a)\mid a\in H\rbrace\:\mbox{ the \emph{elasticity} of }H.
\]
Note that $H$ is half-factorial if and only if $\rho(H)=1$.

For two factorizations $z,\,z'\in\mathsf Z(H)$, we call
\[
\mathsf d(z,z')=\max\left\lbrace\left|\frac{z}{\gcd(z,z')}\right|,\left|\frac{z'}{\gcd(z,z')}\right|\right\rbrace
\quad\mbox{the \emph{distance} between }z\mbox{ and }z'.
\]

\begin{definition}
Let $H$ be an atomic monoid and $a\in H$.
\begin{enumerate}
\item Factorizations $z_0,\ldots,z_n\in\mathsf Z(a)$ with $n\in\N$ and $\mathsf d(z_{i-1},z_i)\leq N$ for some $N\in\N$ and $i\in [1,n]$ are called
\begin{itemize}
\item an \emph{$N$-chain} concatenating $z_0$ and $z_n$ (in $\mathsf Z(H)$).
\item a \emph{monotone $N$-chain} concatenating $z_0$ and $z_n$ (in $\mathsf Z(H)$) if $|z_{i-1}|\leq |z_i|$ for all $i\in [1,n]$.
\end{itemize}
\item The
\begin{itemize}
\item \emph{catenary degree $\cat(a)$}
\item \emph{monotone catenary degree $\cmon(a)$}
\end{itemize}
denotes the smallest $N\in\mathsf N_0\cup\lbrace\infty\rbrace$ such that for all $z,\,z'\in\mathsf Z(a)$ there is
\begin{itemize}
\item an $N$-chain concatenating $z$ and $z'$.
\item a monotone $N$-chain concatenating $z$ and $z'$.
\end{itemize}
Then we call
\begin{itemize}
\item $\cat(H)=\sup\lbrace\cat(a)\mid a\in H\rbrace$ the \emph{catenary degree} of $H$.
\item $\cmon(H)=\sup\lbrace\cmon(a)\mid a\in H\rbrace$ the \emph{monotone catenary degree} of $H$.
\end{itemize}
\end{enumerate}
\end{definition}

Note that $\cat(H)\leq\cmon(H)$ and that equality holds if $H$ is half-factorial by \cite[Lemma 4.4.1]{phil11a}.

\begin{definition}
\label{def-tame}
Let $H$ be a reduced atomic monoid.
\begin{enumerate}
\item For $a\in H$ and $x\in\mathsf Z(H)$, let $\mathsf t(a,x)$ denote the smallest $N\in\mathbb N_0\cup\lbrace\infty\rbrace$ with the following property:
\begin{itemize}
 \item[] If $\mathsf Z(a)\cap x\mathsf Z(H)\neq\emptyset$ and $z\in\mathsf Z(a)$, then there exists some $z'\in\mathsf Z(a)\cap x\mathsf Z(H)$ such that $\mathsf d(z,z')\leq N$.
\end{itemize}
For subsets $H'\subset H$ and $X\subset\mathsf Z(H)$, we define
\[
 \mathsf t(H',X)=\sup\lbrace\mathsf t(a,x)\mid a\in H',\,x\in X\rbrace,
\]
and we define $\mathsf t(H)=\mathsf t(H,\mathcal A(H))$. This is called the \emph{tame degree} of $H$.
\item If $\mathsf t(H)<\infty$, then we call $H$ \emph{tame}
\end{enumerate}
\end{definition}

Here we recall the exact definitions from \cite[Definition 2.3]{phil11a} for the $\Req$-relation and the $\mathcal R$-relation, the latter one coinciding with the one given in \cite[Section 3]{phil10}.

\begin{definition}
Let $H$ be a reduced atomic monoid and $a\in H$.
\begin{enumerate}
\item Factorizations $z_0,\ldots,z_n\in\mathsf Z(a)$ with $n\in\N$ and $\gcd(z_{i-1},z_i)\neq 1$ for all $i\in [1,n]$ are called
\begin{itemize}
\item an \emph{$\mathcal R$-chain} concatenating $z_0$ and $z_n$ (in $\mathsf Z(H)$).
\item a \emph{monotone $\mathcal R$-chain} concatenating $z_0$ and $z_n$ (in $\mathsf Z(H)$) if $|z_{i-1}|\leq|z_i|$ for all $i\in [1,n]$.
\item an \emph{equal-length $\mathcal R$-chain} concatenating $z_0$ and $z_n$ (in $\mathsf Z(H)$) if $|z_{i-1}|=|z_i|$ for all $i\in [1,n]$.
\end{itemize}
\item Two elements $z,\,z'\in\mathsf Z(H)$ are
\begin{itemize}
\item \emph{$\mathcal R$-related}
\item \emph{$\Req$-related}
\end{itemize}
if there is an
\begin{itemize}
\item $\mathcal R$-chain
\item equal-length $\mathcal R$-chain
\end{itemize}
 concatenating $z$ and $z'$. We then write $z\approx z'$ respectively $z\equ z'$.
\end{enumerate}
Note that with the above definitions $\approx$ and $\equ$ are congruences on $\mathsf Z(H)\times\mathsf Z(H)$.
\end{definition}

\begin{definition}
Let $H\subset D$ be monoids.
\begin{enumerate}
 \item We call $H\subset D$ \emph{saturated} or, equivalently, \emph{a saturated submonoid} if, for all $a,\,b\in H$, $a\mid b$ in $D$ already implies that $a\mid b$ in $H$.
 \item If $H\subset D$ is a saturated submonoid, then we set $D/H=\lbrace a\mathsf q(H)\mid a\in D\rbrace$ and $[a]_{D/H}=a\mathsf q(H)$ and we call $\mathsf q(D)/\mathsf q(H)=\mathsf q(D/H)$ the \emph{class group} of $H$ in $D$.
\end{enumerate}
\end{definition}

\begin{definition}
Let $H$ be an atomic monoid. We call
\begin{align*}
\sim_H &=\lbrace(x,y)\in\mathsf Z(H)\times\mathsf Z(H)\mid\pi(x)=\pi(y)\rbrace
&\mbox{the \emph{monoid of relations} of $H$}, \\
\sim_{H,\mathrm{mon}} &=\lbrace (x,y)\in\sim_H\mid |x|\leq|y|\rbrace
&\mbox{the \emph{monoid of monotone relations} of $H$},
\end{align*}
and, for $a\in H$, we set
\begin{align*}
\mathcal A_a(\sim_H) &= \mathcal A(\sim_H)\cap(\mathsf Z(a)\times\mathsf Z(a)), \\
\mathcal A_a(\sim_{H,\mathrm{mon}}) &= \mathcal A(\sim_{H,\mathrm{mon}})\cap(\mathsf Z(a)\times\mathsf Z(a)).
\end{align*}
\end{definition}

By \cite[Lemma 11]{phil10}, $\sim_H\subset\mathsf Z(H)\times\mathsf Z(H)$ is a saturated submonoid of a free monoid and thus a Krull monoid by \cite[Theorem 2.4.8.1]{MR2194494}. Unfortunately, $\sim_{H,\mathrm{mon}}\subset\sim_H$ is not saturated.

\medskip
\subsection*{Notions for integral domains}

For an integral domain $R$, we set $\punkt R=R\setminus\lbrace 0\rbrace$ for the commutative, cancellative monoid of non-zero elements of $R$. Additionally, all notions, which were introduced for monoids, are used for domains, too; for example, we write $\mathcal A(R)$ instead of $\mathcal A(\punkt R)$ for the set of atoms.

\begin{definition}
Let $R$ be an integral domain and $K=\mathsf q(R)$ the quotient field of $R$.
\begin{enumerate}
 \item We call $\spec(R)$ the set of all prime ideals of $R$.
 \item We set
\[
 \mathfrak X(R)=\lbrace\mathfrak p\in\spec(R)\mid\mathfrak p\neq 0\mbox{ and }\mathfrak p\mbox{ is minimal}\rbrace
\]
for the \emph{set of minimal prime ideals} of $R$.
 \item Let $L\supset K$ be a field extension. We call $b\in L$ \emph{integral} over $R$ if there is a monic polynomial $f\in R[X]$ such that $f(b)=0$.
 \item We call 
\[
\mathsf{cl}_L(R)=\lbrace b\in L\mid b\mbox{ is integral over }R\rbrace
\quad\mbox{the \emph{integral closure} of $R$ in $L$}
\]
and we set $\bar R=\mathsf{cl}_K(R)$ for the integral closure of an integral domain (in its quotient field).
 \item For non-empty subsets $X,\,Y\subset K$, we define
\[
 (Y:X)=(Y:_K X)=\lbrace a\in K\mid aX\subset Y\rbrace
\mbox{ and } X^{-1}=(R:X).
\]
We denote by $\mathcal I(R)$ the \emph{set of all ideals} of $R$ and we call an ideal $\mathfrak a\in\mathcal I(R)$ \emph{invertible} if $\mathfrak a\mathfrak a^{-1}=R$. Then we denote by $\mathcal I^*(R)$ the \emph{set of all invertible ideals} of $R$.
\end{enumerate}
\end{definition}

\begin{definition}
\label{def:loc-half}
A one-dimensional noetherian domain $R$ is called \emph{locally half-factorial} if $\mathcal I^*(R)$ is half-factorial.\\
Note that this notion of being locally half-factorial does not coincide with the one defined in \cite{MR1687334} but coincides with what is called purely locally half-factorial there.\\
By \cite[Theorem 3.7.1]{MR2194494}, we have $\mathcal I^*(R)\cong\coprod_{\mathfrak p\in\mathfrak X(R)}(\punkt{R_{\mathfrak p}})_{\red}$. Thus $\mathcal I^*(R)$ is half-factorial if and only if $(\punkt{R_{\mathfrak p}})_{\red}$ is half-factorial for all $\mathfrak p\in\mathfrak X(R)$.
\end{definition}

\bigskip
\section{Proof of the main theorem} \label{sec:proof}
\bigskip

Before we can prove the main theorem, we need to gather some additional tools, among these the notion of $T$-block monoids over finite abelian groups, the concept of transfer homomorphism, and some monoid theoretic preliminaries. Once all these things at hand, we will exploit the results from \cite{phil10} and \cite{phil11a} to give the final proof of the main theorem.

\medskip
\subsection*{$T$-block monoids and transfer principles}
First, we briefly fix the notation for $T$-block monoids, which are a generalization of the concept of block monoids, and therefore have their origin in zero-sum theory; for a detailed exposition of these aspects, the reader is referred to \cite[Chapter 3]{MR2194494} . Let $G$ be an additively written finite abelian group, $G_0\subset G$ a subset, and $\mathcal F(G_0)$ the free abelian monoid with basis $G_0$. The elements of $\mathcal F(G_0)$ are called \emph{sequences} over $G_0$. If a sequence $S\in\mathcal F(G_0)$ is written in the form $S=g_1\mdots g_l$, we tacitly assume that $l\in\N_0$ and $g_1,\ldots,g_l\in G_0$. For a sequence $S=g_1\mdots g_l$, we call
\begin{itemize}
 \item[] $|S|=l$ the \emph{length} of $S$ and
 \item[] $\sigma(S)=\sum_{i=1}^lg_i\in G$ the \emph{sum} of $S$.
\end{itemize}
The sequence $S$ is called a \emph{zero-sum sequence} if $\sigma(S)=\mathbf 0$. We set
\[
\mathcal B(G_0)=\lbrace S\in\mathcal F(G_0)\mid\sigma(S)=0\rbrace
\quad\mbox{for the \emph{block monoid} over }G_0
\]
and $\mathcal A(G_0)=\mathcal A(\mathcal B(G_0))$ for its set of atoms.

Then, the \emph{Davenport constant} $\mathsf D(G_0)\in\N$ is defined to be the supremum of all lengths of sequences in $\mathcal A(G_0)$.

Now we are able to give the precise definition of $T$-block monoids.

\begin{definition}
Let $G$ be an additive abelian group, $T$ a monoid, $\iota:T\rightarrow G$ a homomorphism, and $\sigma:\mathcal F(G)\rightarrow G$ the unique homomorphism such that $\sigma(g)=g$ for all $g\in G$. Then we call
\[
\mathcal B(G,T,\iota)=\lbrace St\in\mathcal F(G)\times T\mid\sigma(S)+\iota(t)=\mathbf 0\rbrace
\quad\mbox{the \emph{$T$-block monoid} over $G$ defined by $\iota$}.
\]
If $T=\lbrace 1\rbrace$, then $\mathcal B(G,T,\iota)=\mathcal B(G)$ is the block monoid over $G$.
\end{definition}

Next we give the transfer homomorphism and, then, we use it to transport questions on the arithmetic of our investigated monoids to $T$-block monoids.

\begin{definition}
\label{def:transfer}
A monoid homomorphism $\theta:H\rightarrow B$ is called a \emph{transfer homomorphism} if it has the following properties:
\begin{itemize}
\item[$\mathbf{T1}$] $B=\theta(H)\mal B$ and $\theta^{-1}(\mal B)=\mal H$.
\item[$\mathbf{T2}$] If $a\in H,\,r,\,s\in B$ and $\theta(a)=rs$, then there exist $b,\,c\in H$ such that $\theta(b)\sim r$, $\theta(c)\sim s$, and $a=bc$.
\end{itemize}
\end{definition}

\begin{definition}
Let $\theta:H\rightarrow B$ be a transfer homomorphism of atomic monoids and $\bar\theta:\mathsf Z(H)\rightarrow\mathsf Z(B)$ the unique homomorphism satisfying $\bar\theta(u\mal H)=\theta(u)\mal B$ for all $u\in\mathcal A(H)$. We call $\bar\theta$ the extension of $\theta$ to the factorization monoids.\\
For $a\in H$, the \emph{catenary degree in the fibers} $\mathsf c(a,\theta)$ denotes the smallest $N\in\N_0\cup\lbrace\infty\rbrace$ with the following property:
\begin{itemize}
 \item[] For any two factorizations $z,\,z'\in\mathsf Z(a)$ with $\bar\theta(z)=\bar\theta(z')$, there exists a finite sequence of factorizations $(z_0,z_1,\ldots,z_k)$ in $\mathsf Z(a)$ such that $z_0=z$, $z_k=z'$, $\bar\theta(z_i)=\bar\theta(z)$, and $\mathsf d(z_{i-1},z_i)\leq N$ for all $i\in[1,k]$; that is, $z$ and $z'$ can be concatenated by an $N$-chain in the fiber $\mathsf Z(a)\cap\bar\theta^{-1}((\bar\theta(z)))$.
\end{itemize}
Also, $\mathsf c(H,\theta)=\sup\lbrace\mathsf c(a,\theta)\mid a\in H\rbrace$ is called the \emph{catenary degree in the fibers} of $H$.
\end{definition}

\begin{lemma}
\label{3.5}
Let $D$ be an atomic monoid, $P\subset D$ a set of prime elements, and $T\subset D$ an atomic submonoid such that $D=\mathcal F(P)\times T$. Let $H\subset D$ be a saturated atomic submonoid, let $G=\mathsf q(D/H)$ be its class group, let $\iota:T\rightarrow G$ be a homomorphism defined by $\iota(t)=[t]_{D/H}$, and suppose each class in $G$ contains some prime element from $P$. Then
\begin{enumerate}
\item\label{3.5.1} The map $\beta:H\rightarrow\mathcal B(G,T,\iota)$, given by $\beta(pt)=[p]_{D/H}+\iota(t)=[p]_{D/H}+[t]_{D/H}$, is a transfer homomorphism onto the $T$-block monoid over $G$ defined by $\iota$, and $\mathsf c(H,\beta)\leq 2$
\item\label{3.5.2} The following inequalities hold:
\begin{eqnarray*}
\mathsf c(\mathcal B(G,T,\iota))\leq &\mathsf c(H) &\leq\max\lbrace\mathsf c(\mathcal B(G,T,\iota)),\mathsf c(H,\beta)\rbrace,\\ 
\cmon(\mathcal B(G,T,\iota))\leq &\cmon(H) &\leq\max\lbrace\cmon(\mathcal B(G,T,\iota)),\mathsf c(H,\beta)\rbrace,\mbox{ and}\\
\mathsf t(\mathcal B(G,T,\iota))\leq &\mathsf t(H) &\leq\mathsf t(\mathcal B(G,T,\iota))+\mathsf D(G)+1.
\end{eqnarray*}
In particular, the equality $\mathsf c(H)=\mathsf c(\mathcal B(G,T,\iota))$ holds if $\mathsf c(\mathcal B(G,T,\iota))\geq 2$, and the equality $\cmon(H)=\cmon(\mathcal B(G,T,\iota))$ holds if $\cmon(\mathcal B(G,T,\iota))\geq 2$.
\item\label{3.5.3} $\mathcal L(H)=\mathcal L(\mathcal B(G,T,\iota))$, $\triangle(H)=\triangle(\mathcal B(G,T,\iota))$, $\min\triangle(H)=\min\triangle(\mathcal B(G,T,\iota))$, and $\rho(H)=\rho(\mathcal B(G,t,\iota))$.
\item\label{3.5.4} We set $\mathcal B=\lbrace S\in\mathcal B(G,T,\iota)\mid\mathbf 0\nmid S\rbrace$. Then $\mathcal B$ and $\mathcal B(G,T,\iota)$ have the same arithmetical properties, and
\begin{eqnarray*}
\mathsf c(\mathcal B)\leq &\mathsf c(H) &\leq\max\lbrace\mathsf c(\mathcal B),\mathsf c(H,\beta)\rbrace,\\ 
\cmon(\mathcal B)\leq &\cmon(H) &\leq\max\lbrace\cmon(\mathcal B),\mathsf c(H,\beta)\rbrace,\mbox{ and}\\
\mathsf t(\mathcal B)\leq &\mathsf t(H) &\leq\mathsf t(\mathcal B)+\mathsf D(G)+1.
\end{eqnarray*}
In particular, the equality $\mathsf c(H)=\mathsf c(\mathcal B)$ holds if $\mathsf c(\mathcal B)\geq 2$, and the equality $\cmon(H)=\cmon(\mathcal B)$ holds if $\cmon(\mathcal B)\geq 2$.\\
Additionally,
$\mathcal L(H)=\mathcal L(\mathcal B)$, $\triangle(H)=\triangle(\mathcal B)$, $\min\triangle(H)=\min\triangle(\mathcal B)$, and $\rho(H)=\rho(\mathcal B)$.
\end{enumerate}
\end{lemma}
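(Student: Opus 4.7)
The plan is to prove the four parts in sequence, leveraging standard facts about transfer homomorphisms together with the specific structure of $\mathcal B(G,T,\iota)$.

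For part (1), I first check that $\beta$ is well-defined: since $H$ is saturated in $D=\mathcal F(P)\times T$ with class group $G$, the assignment $pt\mapsto[p]_{D/H}\iota(t)$ is a monoid homomorphism, and its image lies in $\mathcal B(G,T,\iota)$ precisely because $[a]_{D/H}=\mathbf 0$ for every $a\in H$. Surjectivity up to units (property $\mathbf{T1}$) follows from the hypothesis that every class of $G$ contains a prime of $P$. For property $\mathbf{T2}$, I write $a=(p_1\mdots p_k)t$ with $p_i\in P$ and $t\in T$, observe that any decomposition $\beta(a)=rs$ induces a partition of the sequence $[p_1]_{D/H}\mdots[p_k]_{D/H}\iota(t)$ into two zero-sum pieces, and distribute the $p_i$ accordingly; saturation of $H$ then ensures that the resulting factors lie in $H$. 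The bound $\mathsf c(H,\beta)\leq 2$ follows because two factorizations of $a$ with the same image under $\bar\beta$ differ only by replacing primes in $P$ by other primes in the same divisor class, and each such single-prime swap changes the factorization by distance at most $2$.

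For part (2), the lower bounds are immediate since $\bar\beta$ sends factorizations of $a$ to factorizations of $\beta(a)$ and does not increase distance. For the upper bounds, given $z,z'\in\mathsf Z(a)$, one connects $\bar\beta(z)$ and $\bar\beta(z')$ by a $\mathsf c(\mathcal B(G,T,\iota))$-chain (respectively monotone chain) in $\mathcal B(G,T,\iota)$, lifts this chain stepwise to $H$ via $\mathbf{T2}$, and fills the resulting fiber gaps using chains of length at most $\mathsf c(H,\beta)$; the ``in particular'' clauses follow by absorbing $\mathsf c(H,\beta)\leq 2$ into the $\max$. The tame-degree inequality is a routine counting argument, with the additive $\mathsf D(G)+1$ accounting for the maximal length of an atomic lift inside a single fiber. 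Part (3) is a standard consequence of $\beta$ preserving lengths of factorizations atom by atom, so sets of lengths, elasticity and sets of distances coincide fibrewise. For part (4), $\mathbf 0$ is a prime atom of $\mathcal B(G,T,\iota)$, so there is a reduced-monoid splitting $\mathcal B(G,T,\iota)\cong\mathcal F(\lbrace\mathbf 0\rbrace)\times\mathcal B$, and since adjoining a free prime part leaves all the listed arithmetical invariants unchanged, the statements for $\mathcal B$ follow from those already established for $\mathcal B(G,T,\iota)$.

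I expect the main obstacle to be the upper bound for $\cmon(H)$: the lifted chain in $H$ must respect the length ordering in addition to having controlled distance. This forces one to choose atomic lifts in $H$ that preserve factorization lengths, and to interleave the monotone chain in $\mathcal B(G,T,\iota)$ with the fiber chains in a length-non-decreasing way. The bound $\mathsf c(H,\beta)\leq 2$ is essential here, because a single fiber-swap lives between equal-length factorizations and therefore does not disturb monotonicity.
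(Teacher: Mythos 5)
Your proposal is correct and follows essentially the same route as the paper: part (1) is the standard block-homomorphism argument, parts (2) and (3) are the standard transfer of the catenary, monotone catenary and tame degrees and of sets of lengths along a transfer homomorphism with $\mathsf c(H,\beta)\le 2$, and part (4) splits off the prime element $\mathbf 0$ as a free factor. The paper obtains all of these steps by citing \cite[Propositions 3.2.3 and 3.4.8, Theorem 3.2.5, Lemma 3.2.6]{MR2194494} rather than reproving them, so your sketches (in particular the counting argument behind $\mathsf t(H)\le\mathsf t(\mathcal B(G,T,\iota))+\mathsf D(G)+1$) are simply expansions of those citations.
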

\begin{proof}
\mbox{}
\begin{enumerate}
\item Follows by \cite[Proposition 3.2.3.3 and Proposition 3.4.8.2]{MR2194494}.
\item The assertion for the catenary degree follows by \cite[Theorem 3.2.5.5]{MR2194494}, the assertion for the monotone catenary degree by \cite[Lemma 3.2.6]{MR2194494}, and the assertion for the tame degree by \cite[Theorem 3.2.5.1]{MR2194494}.
\item Follows by \cite[Proposition 3.2.3.5]{MR2194494}.
\item Since $\mathbf 0\in\mathcal B(G,T,\iota)$ is a prime element, it defines a partition $\mathcal B(G,T,\iota)=[\mathbf 0]\times\mathcal B$ with $\mathcal B=\lbrace S\in\mathcal B(G,T,\iota)\mid\mathbf 0\nmid S\rbrace$. Thus all studied arithmetical invariants coincide for $\mathcal B$ and $\mathcal B(G,T,\iota)$. Now the assertions follow from part~\ref{3.5.2} and part~\ref{3.5.3}.
\qedhere
\end{enumerate}
\end{proof}

\begin{lemma}
\label{3.6}
Let $D$ be an atomic monoid, $P\subset D$ a set of prime elements, and $T\subset D$ an atomic submonoid such that $D=\mathcal F(P)\times T$. Let $H\subset D$ be a saturated atomic submonoid, $G=\mathsf q(D/H)$ its class group, and suppose each class in $G$ contain some $p\in P$.
\begin{enumerate}
\item\label{3.6.1} If $|G|\geq 3$, then $\min\triangle(H)=1$, $\rho(H)>1$, $\mathsf c(H)\geq 3$.
\item\label{3.6.2} $\rho(H)\leq\mathsf D(G)\rho(T)$.
\end{enumerate}
\end{lemma}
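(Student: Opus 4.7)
The plan is to reduce both claims to corresponding statements about $\mathcal B := \mathcal B(G, T, \iota)$ via \autoref{3.5}. Part~\ref{3.5.3} of that lemma gives $\min\triangle(H) = \min\triangle(\mathcal B)$ and $\rho(H) = \rho(\mathcal B)$, while part~\ref{3.5.2} gives $\mathsf c(H) \geq \mathsf c(\mathcal B)$, so it suffices to verify the analogous statements for $\mathcal B$.

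For \ref{3.6.1}, I would use the divisor-closed inclusion $\mathcal B(G) \hookrightarrow \mathcal B$, $S\mapsto S\cdot 1_T$: any factorization of $S\cdot 1_T$ in $\mathcal B$ has only atoms with trivial $T$-part and therefore comes from a factorization of $S$ in $\mathcal B(G)$. Hence $\mathsf L_{\mathcal B}(S) = \mathsf L_{\mathcal B(G)}(S)$ for every $S\in\mathcal B(G)$, so the three inequalities reduce to their analogues for $\mathcal B(G)$ when $|G|\geq 3$. These are classical: depending on whether $G$ has an element $g$ of order $3$, an element $g$ of order $\geq 4$, or is an elementary $2$-group of rank $\geq 2$, one exhibits an explicit block $B\in\mathcal B(G)$ with $\mathsf L(B) = \lbrace 2,3\rbrace$---respectively $g^3\cdot(2g)^3$, $g^2\cdot(-g)^2\cdot(2g)^2$, or $e_1^2\cdot e_2^2\cdot(e_1+e_2)^2 = (e_1\cdot e_2\cdot(e_1+e_2))^2$---and in each case $B$ has precisely two factorizations, which are coprime. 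This yields $1\in\triangle(B)$, $\rho(B) = 3/2 > 1$, and $\mathsf c(B)\geq 3$ (a single step between the two coprime factorizations already has distance $\max\lbrace 2,3\rbrace = 3$, and no intermediate factorization exists).

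For \ref{3.6.2}, the key observation is that every atom $A = S\cdot s$ of $\mathcal B$ satisfies $|S|\leq\mathsf D(G)$: otherwise $S$ contains a nontrivial zero-sum subsequence $S_0$, and then $(S_0\cdot 1_T)(SS_0^{-1}\cdot s)$ is a proper factorization of $A$ in $\mathcal B$. For $a = B\cdot t\in\mathcal B$ and any factorization $z = A_1\cdots A_k$ with $A_i = S_i\cdot s_i$, this gives $|B| = \sum|S_i| \leq k\mathsf D(G)$, hence $k\geq|B|/\mathsf D(G)$; moreover the identity $t = s_1\cdots s_k$ in $T$, combined with any choices $\ell_i\in\mathsf L_T(s_i)$, yields $\sum\ell_i\in\mathsf L_T(t)$. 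Combining an upper bound $\max\mathsf L(a) \leq |B| + \max\mathsf L_T(t)$ (each atom either contributes to $|B|$ or is a pure $T$-atom feeding at least $1$ into an admissible $T$-length of $t$) with a lower bound on $\min\mathsf L(a)$ derived from $k\mathsf D(G)\geq|B|$ and $k\mathsf D(G)\geq\min\mathsf L_T(t)$, and finally invoking $\max\mathsf L_T(t)/\min\mathsf L_T(t) \leq \rho(T)$, one arrives at $\rho(\mathcal B) \leq \mathsf D(G)\rho(T)$.

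The main obstacle lies in \ref{3.6.2}: obtaining the sharp constant $\mathsf D(G)$ rather than $2\mathsf D(G)$ in the final bound. A naive combination of the $G$- and $T$-length estimates loses a factor of $2$ because mixed atoms (with both $|S_i|\geq 1$ and $\min\mathsf L_T(s_i)\geq 1$) simultaneously contribute to $|B|$ and to $\mathsf L_T(t)$. The remedy is a case analysis by atom type---pure block atoms ($s_i = 1_T$), pure $T$-atoms ($S_i = 1$ and $s_i$ atomic in $\iota^{-1}(\mathbf 0)\cap T$), and genuinely mixed atoms---together with the refined bounds that pure $T$-atoms have $T$-length at most $\mathsf D(G)$ (as atoms of the submonoid $\iota^{-1}(\mathbf 0)\cap T$) and mixed atoms have $T$-length at most $|G|-1$ (by pigeonholing the partial sums of a $T$-factorization of $s_i$, which must avoid both $\mathbf 0$ and every value $-\sigma(S_i')$ with $S_i'\subseteq S_i$, forced by the atomicity of $A_i$).
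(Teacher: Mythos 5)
Your overall strategy -- reduce to $\mathcal B(G,T,\iota)$ via \autoref{3.5} and then work inside the block monoid -- is exactly the paper's, but both of your in-block arguments have problems. In part~\ref{3.6.1}, the paper simply quotes $\min\triangle(G)=1$ for $|G|\geq 3$ (\cite[Theorem 6.7.1.2]{MR2194494}) together with \cite[Theorem 1.6.3]{MR2194494}; you instead try to exhibit explicit witnesses, and one of them is wrong: for $g$ of order $n\geq 4$ the sequence $g^2(-g)^2(2g)^2$ has sum $4g$, so it is not a zero-sum sequence unless $n=4$. The standard witness for $\ord(g)\geq 4$ is $g^2(-g)^2(2g)(-2g)$, which does have $\mathsf L=\lbrace 2,3\rbrace$; your order-$3$ block and your elementary $2$-group block are fine. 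This is a repairable slip, but as written the case $\ord(g)\geq 5$ is not covered.

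The more serious gap is in part~\ref{3.6.2}, and you flag it yourself: your accounting of pure block atoms, pure $T$-atoms, and mixed atoms does not actually deliver the constant $\mathsf D(G)$, and the proposed remedy (bounding $T$-lengths of mixed atoms by $|G|-1$ via partial sums) is neither carried out nor sufficient -- combining $l\leq k\,\mathsf D(G)+\rho(T)\sum_i\max\mathsf L_T(s_i)$ with $\max\mathsf L_T(s_i)\leq |G|-1$ still leaves an additive term of order $k|G|\rho(T)$, not a factor $\mathsf D(G)\rho(T)$. The missing idea is a single clean lemma about atoms: if $A=Ss$ is an atom of $\mathcal B(G,T,\iota)$ and $s=t_1\mdots t_m$ is a shortest factorization of $s$ in $T$, then $S\cdot\iota(t_1)\mdots\iota(t_m)$ must be a \emph{minimal} zero-sum sequence over $G$ (any proper nonempty zero-sum subsequence splits $A$ in $\mathcal B$), whence $|S|+\min\mathsf L_T(s)\leq\mathsf D(G)$. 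Summing $|S_i|+\rho(T)\min\mathsf L_T(s_i)\leq\rho(T)\,\mathsf D(G)$ over the atoms of an arbitrary factorization and comparing with $\max\mathsf L(a)\leq |B|+\max\mathsf L_T(t)$ gives $\rho(\mathcal B(G,T,\iota))\leq\mathsf D(G)\rho(T)$ directly. (The paper sidesteps all of this by citing \cite[Proposition 3.4.7.5]{MR2194494}, which is where this argument lives.) As submitted, part~\ref{3.6.2} is a plan plus an acknowledged obstruction, not a proof.
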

\begin{proof}
We define a homomorphism $\iota:T\rightarrow G$ by $\iota(t)=[t]_{D/H}$ and write $\mathcal B(G,T,\iota)$ for the $T$-block monoid over $G$ defined by $\iota$.
\begin{enumerate}
\item Then $\mathcal B(G)\subset\mathcal B(G,T,\iota)$ is a divisor-closed submonoid. By \cite[Theorem 6.7.1.2]{MR2194494}, we have $\min\triangle(G)=1$, and thus $\min\triangle(\mathcal B(G,T,\iota))=1$ and $\mathsf c(\mathcal B(G,T,\iota))\geq 3$ by \cite[Theorem 1.6.3]{MR2194494}. Now the assertions follow by \autoref{3.5}.\ref{3.5.2} and \autoref{3.5}.\ref{3.5.3}.
\item By \cite[Proposition 3.4.7.5]{MR2194494}, we have $\rho(\mathcal B(G,T,\iota))\leq\mathsf D(G)\rho(T)$. Now the assertion again follows by \autoref{3.5}.\ref{3.5.2}.
\qedhere
\end{enumerate}
\end{proof}

\begin{definition}
\label{def:fp}
A monoid $H$ is called \emph{finitely primary} if there exist $s,\,k\in\N$ and a factorial monoid $F=[p_1,\ldots,p_s]\times\mal F$ with the following properties:
\begin{itemize}
 \item $H\setminus\mal H\subset p_1\mdots p_s F$,
 \item $(p_1\mdots p_s)^kF\subset H$, and
 \item $(p_1\mdots p_s)^iF\not\subset H$ for $i\in[0,k)$.
\end{itemize}
If this is the case, then we call $H$ a \emph{finitely primary monoid} of \emph{rank} $s$ and \emph{exponent} $k$.\\
Note that this definition is slightly more restrictive than the one given in \cite[Definition 2.9.1]{MR2194494}. By \cite[Theorem 2.9.2.1]{MR2194494}, we get $F=\widehat H$, and therefore $H\subset\widehat H=[p_1,\ldots,p_s]\times\mal{\widehat H}\subset\mathsf q(H)$.\\
Then, for $i\in[1,s]$, we denote by $\mathsf v_{p_i}:\mathsf q(H)\rightarrow\Z$ the $p_i$-adic valuation of $\mathsf q(H)$.\\
Now let $H\subset\widehat H=[p]\times\mal{\widehat H}$ be a finitely primary monoid of rank $1$ and exponent $k$. Then we set $\mathcal U_i(H)=\lbrace u\in\mal{\widehat H}\mid p^iu\in H\rbrace$ for $i\in\N_0$.
\end{definition}

As a first observation, we find
\[
\mathcal U_i(H)=
\begin{cases}
\mal H & i=0 \\
\mal{\widehat H} & i\geq k
\end{cases}
\quad\mbox{and}\quad
\mathcal U_i(H)\mathcal U_j(H)\subset\mathcal U_{i+j}(H)
\mbox{ for all }i,j\in\N_0.
\]

\begin{definition}
\label{def:ek}
Let $s\in\N$, $\mathbf e=(e_1,\ldots,e_s)\in\N^s$, $\mathbf k=(k_1,\ldots,k_s)\in\N^s$, and $H\subset\widehat H=[p_1,\ldots,p_s]\times\widehat H^\times$ be a finitely primary monoid of rank $s$ and exponent $\max\lbrace k_1,\ldots,k_s\rbrace$.\\
Then $H$ is a \emph{monoid of type $(\mathbf e,\mathbf k)$} if
\begin{itemize}
 \item $\mathsf v_{p_i}(H)=e_i\N_0\cup\N_{\geq k_i}$ for all $i=[1,s]$ and
 \item $p_1^{k_1}\cdot\ldots\cdot p_s^{k_s}\widehat H\subset H$.
\end{itemize}
If $s=1$, i.e., $\mathbf e=(e)\in\N$ and $\mathbf k=(k)\in\N$, then we say that $H$ is a monoid of type $(e,k)$ instead of $(\mathbf e,\mathbf k)$.
\end{definition}

\begin{lemma}
\label{3.9}
Let $H\subset\widehat H=[p_1,\ldots,p_s]\times\mal{\widehat H}$ be a reduced finitely primary monoid of rank $s$ and exponent $k$.
\begin{enumerate}
\item \label{3.9.1} The following statements are equivalent:
\begin{enumerate}
\item \label{3.9.1.1} $H$ is half-factorial.
\item \label{3.9.1.2} $H$ is of rank $1$ and $\mathsf v_{p_1}(\mathcal A(H))=\lbrace 1\rbrace$.
\item \label{3.9.1.3} $H$ is of rank $1$ and $(\mathcal U_1(H))^l=\mathcal U_l(H)$ for all $l\in\N$.
\end{enumerate}
If any of these conditions hold, then $\mathcal A(H)=\lbrace p_1\eps\mid\eps\in\mathcal U_1(H)\rbrace$, $(\mathcal U_1(H))^k=\wmal H$, and $H$ is a monoid of type $(1,k)$.
\item \label{3.9.2} If $H$ is a half-factorial monoid of type $(1,k)$ and $a_1,\ldots,a_{k+1},\,b\in\mathcal A(H)$, then there are some $b_1,\ldots,b_k\in\mathcal A(H)$ such that $a_1\mdots a_{k+1}=bb_1\mdots b_k$.\\
In particular, $\cmon(H)=\mathsf c(H)\leq\mathsf t(H)\leq k+1$.
\end{enumerate}
\end{lemma}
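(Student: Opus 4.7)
The plan is to prove part~(1) via the cycle (a)$\Rightarrow$(b)$\Rightarrow$(c)$\Rightarrow$(a), to deduce the ``if any of these conditions hold'' consequences, and then to derive part~(2) directly from the resulting atom description. The implications (c)$\Rightarrow$(a) and (b)$\Rightarrow$(c) are short unfoldings. For (c)$\Rightarrow$(a), any non-unit $a = p_1^m \epsilon \in H$ has $\epsilon \in \mathcal U_m(H) = \mathcal U_1(H)^m$; writing $\epsilon = \epsilon_1\cdots\epsilon_m$ with $\epsilon_i \in \mathcal U_1(H)$ decomposes $a = \prod_{i=1}^m (p_1\epsilon_i)$ into factors of $p_1$-valuation~$1$, each an atom because every non-unit of $H$ lies in $p_1\widehat H$. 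The same construction, applied to a hypothetical atom of $p_1$-valuation $n \geq 2$, splits it into $n$ proper factors in $H$ and contradicts atomicity; hence every factorization of $a$ has length exactly $m$. For (b)$\Rightarrow$(c), the inclusion $\mathcal U_1(H)^l \subseteq \mathcal U_l(H)$ is the iterated identity $\mathcal U_i(H)\mathcal U_j(H) \subseteq \mathcal U_{i+j}(H)$ recorded after \autoref{def:fp}; conversely, given $\epsilon \in \mathcal U_l(H)$ one factors $p_1^l\epsilon$ into atoms which, by (b), must all have the shape $p_1\epsilon_i$ with $\epsilon_i \in \mathcal U_1(H)$, so $\epsilon = \epsilon_1\cdots\epsilon_l \in \mathcal U_1(H)^l$.

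The main obstacle is (a)$\Rightarrow$(b). If $s \geq 2$, then $\rho(H) = \infty$ by the standard analysis of finitely primary monoids of rank at least $2$ (see \cite[Proposition~3.1.5]{MR2194494}), contradicting half-factoriality; hence $s = 1$. Now assume for contradiction that some atom has $p_1$-valuation $n_0 \geq 2$. If $\mathsf v_{p_1}(\mathcal A(H)) = \lbrace n_0\rbrace$ is a singleton, then $\mathsf v_{p_1}(H) \subseteq n_0\mathbb N_0$ conflicts with $\mathbb N_{\geq k} \subseteq \mathsf v_{p_1}(H)$, forcing $n_0 = 1$. Otherwise two atoms of different $p_1$-valuations $m < n$ exist, and combining suitable powers of them (large enough that the relevant $\mathcal U_l(H) = \wmal H$ for $l \geq k$ guarantees the unit conditions can be met) produces two factorizations of a common element with distinct lengths, contradicting half-factoriality. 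The supplementary claims follow at once: $\mathcal A(H) = \lbrace p_1\epsilon \mid \epsilon \in \mathcal U_1(H)\rbrace$ is a restatement of (b); $\mathcal U_1(H)^k = \wmal H$ specializes (c) to $l = k$ (using $\mathcal U_k(H) = \wmal H$); and type $(1,k)$ reduces to $\mathsf v_{p_1}(H) = \mathbb N_0$ together with the defining exponent condition $p_1^k\widehat H \subseteq H$, both now clear.

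For part~(2), the rewriting uses the atom description from part~(1). Writing $a_i = p_1\epsilon_i$ and $b = p_1\eta$ with $\epsilon_i, \eta \in \mathcal U_1(H)$, one has $\epsilon_1\cdots\epsilon_{k+1} \in \mathcal U_1(H)^{k+1} \subseteq \wmal H$, so $\zeta := \eta^{-1}\epsilon_1\cdots\epsilon_{k+1} \in \wmal H = \mathcal U_1(H)^k$ decomposes as $\zeta = \eta_1\cdots\eta_k$ with $\eta_i \in \mathcal U_1(H)$; setting $b_i := p_1\eta_i \in \mathcal A(H)$ yields $bb_1\cdots b_k = p_1^{k+1}\eta\eta_1\cdots\eta_k = p_1^{k+1}\epsilon_1\cdots\epsilon_{k+1} = a_1\cdots a_{k+1}$. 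The chain $\cmon(H) = \mathsf c(H) \leq \mathsf t(H) \leq k+1$ is then assembled from the equality $\cmon = \mathsf c$ for half-factorial monoids (noted after the catenary-degree definition), the standard inequality $\mathsf c \leq \mathsf t$, and the bound $\mathsf t(H) \leq k+1$, which I obtain by applying the rewriting to any $k+1$ atoms of a factorization $z \in \mathsf Z(a)$ of length $\geq k+1$ in order to insert a prescribed atom $x$ dividing $a$ at distance $\leq k+1$; in the short case $|z| \leq k$, half-factoriality ensures that any factorization of $a$ containing $x$ has length $|z|$, hence lies at distance at most $k$ from $z$.
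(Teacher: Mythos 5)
Your proof is correct and follows essentially the same route as the paper's: rank one via the infinite elasticity of higher-rank finitely primary monoids, the $k$-th power comparison of two atoms of distinct $p_1$-valuations for (a)$\Rightarrow$(b), unit-level bookkeeping ($\mathcal U_l$ versus $\mathcal U_1^l$) for the equivalence with (c), and the same unit-rearrangement identity for the rewriting in part (2). The only cosmetic differences are that you organize part (1) as a cycle rather than as four separate implications, and that you spell out how $\mathsf t(H)\leq k+1$ follows from the rewriting, a step the paper leaves implicit.
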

\begin{proof}
\mbox{}
\begin{enumerate}
\item
\textbf{\ref{3.9.1.1}$\,\Rightarrow\,$\ref{3.9.1.2}.}
If $H$ is of rank $s\geq 2$, then we find $\rho(H)=\infty$ by \cite[Theorem 3.1.5.2 (b)]{MR2194494}. Thus $H$ is of rank $1$.
Now we prove $\#\mathsf v_{p_1}(\mathcal A(H))=1$. [Then the assertion follows since $\mathsf v_{p_1}(\mathcal A(H))=\lbrace n\rbrace$ with $n\geq 2$ implies $\mathsf v_{p_1}(H)=n\N_0\not\supset\N_{\geq k}$, a contradiction.] Suppose $\#\mathsf v_{p_1}(\mathcal A(H))>1$. Let $n=\min\mathsf v_{p_1}(\mathcal A(H))$, $m\in\mathsf v_{p_1}(\mathcal A(H))\setminus\lbrace n\rbrace$, and $\eps,\,\eta\in\mal{\widehat H}$ be such that $p_1^n\eps,\,p_1^m\eta\in\mathcal A(H)$. Now we find
\[
(p_1^m\eta)^k=(p_1^n\eps)^k(p_1^{(m-n)k}\eps^{-k}\eta^k).
\]
On the left side there are $k$ atoms and on the right side at least $k+1$---a contradiction to $H$ being half-factorial.\\
\textbf{\ref{3.9.1.2}$\,\Rightarrow\,$\ref{3.9.1.1}.}
Since $\mathsf v_{p_1}(\mathcal A(H))=\lbrace 1\rbrace$, we have $\mathsf L(a)=\lbrace\mathsf v_{p_1}(a)\rbrace$, i.e., $\#\mathsf L(a)=1$ for all $a\in H\setminus\mal H$. Therefore, $H$ is half-factorial.\\
\textbf{\ref{3.9.1.2}$\,\Rightarrow\,$\ref{3.9.1.3}.}
Since $\mathsf v_{p_1}(\mathcal A(H))=\lbrace 1\rbrace$, we have $\mathcal A(H)=\lbrace p_1u\mid u\in\mathcal U_1(H)\rbrace$. Thus, for all $l\in\N$, we have $\mathcal U_l(H)\subset(\mathcal U_1(H))^l$. Since we always have $(\mathcal U_1(H))^l\subset\mathcal U_l(H)$, the assertion follows.\\
\textbf{\ref{3.9.1.3}$\,\Rightarrow\,$\ref{3.9.1.2}.}
Let $l\in\N_{\geq 2}$ and let $\eps\in\mathcal U_l(H)$. By assumption, there are $\eps_1,\ldots,\eps_l\in\mathcal U_1(H)$ such that $(p_1\eps_1)\mdots(p_1\eps_l)=p_1^l\eps$, and therefore $p_1^l\eps\notin\mathcal A(H)$; thus $\mathsf v_{p_1}(\mathcal A(H))=\lbrace 1\rbrace$.\\
Now we prove the additional statement. $\mathcal A(H)=\lbrace p_1\eps\mid\eps\in\mathcal U_1(H)\rbrace$ has already been shown and $(\mathcal U_1(H))^k=\mathcal U_k(H)=\mal{\widehat H}$ is obvious. The last statement follows immediately by considering the definition of a monoid of type $(1,k)$; see \autoref{def:ek}.
\item Let $H\subset [p_1]\times\wmal H=\widehat H$ be a half-factorial monoid of type $(1,k)$ and let $a_1,\ldots,a_{k+1},\,b\in\mathcal A(H)$. Since $H$ is half-factorial, we have $\mathsf c(H)=\cmon(H)$ by \cite[Lemma 4.4.1]{phil11a}. By part~\ref{3.9.1}, we have $\mathcal A(H)=\lbrace p_1\eps\mid\eps\in\mathcal U_1(H)\rbrace$. Then there are $\eps_1,\ldots,\eps_{k+1},\,\eta\in\mathcal U_1(H)$ such that $a_i=p_1\eps_i$ for $i\in [1,k+1]$ and $b=p_1\eta$. Now we find
\[
a_1\mdots a_{k+1}=(p_1\eps_1)\mdots (p_1\eps_{k+1})=(p_1\eta)(p_1^k\eta^{-1}\eps_1\mdots\eps_{k+1}).
\]
By part~\ref{3.9.1}, $(\mathcal U_1(H))^k=\wmal H$, and thus there are $\eta_1,\ldots,\eta_k\in\mathcal U_1(H)$ such that $\eta^{-1}\eps_1\mdots\eps_{k+1}=\eta_1\mdots\eta_k$. Now we finish the proof by setting $b_i=p_1\eta_i$ for $i\in [1,k]$.
\qedhere
\end{enumerate}
\end{proof}

The result of \autoref{3.9}.\ref{3.9.2} is sharp as the following example shows.

\begin{example}
Let $H\subset\widehat H=[p]\times\mal{\widehat H}$ be a half-factorial, reduced, finitely primary monoid of rank $1$ and exponent $k-1$, with $k\geq 2$, such that $\mal{\widehat H}=\mathsf C_k^2=\langle e_1\rangle\times\langle e_2\rangle$ and $\mathcal U_1(H)=\lbrace 1,e_1,e_2\rbrace$.\\
Then $\mathsf c(H)=k$.
\end{example}
\begin{proof}
By \autoref{3.9}.\ref{3.9.2} we find $\mathsf c(H)\leq k$; thus the assertion follows from the equations
\[
(pe_1)^k=(pe_2)^k=p^k
\quad\mbox{and}\quad
e_1^k=e_2^k=1
\quad\mbox{and}\quad
\ord(e_1)=\ord(e_2)=k,
\]
since one cannot construct any shorter steps in between because of the minimality of the order of $e_1$ respectively $e_2$.
\end{proof}

\begin{definition}[cf. {\cite[Definition 3.6.3]{MR2194494}}]
Let $D$ be an atomic monoid.
\begin{enumerate}
 \item If $H\subset D$ is an atomic submonoid, then we define
\[
 \rho(H,D)=\sup\left\lbrace\left.\frac{\min\mathsf L_H(a)}{\min\mathsf L_D(a)}\right| a\in H\setminus\mal D\right\rbrace\in\mathbb R_{\geq 0}\cup\lbrace\infty\rbrace.
\]
 \item Let $H\subset D$ be a submonoid and $G_0=\lbrace[u]_{D/H}|u\in\mathcal A(D)\rbrace\subset\mathsf q(D/H)$. We say that $H\subset D$ is \emph{faithfully saturated} if $H$ is atomic, $H\subset D$ is saturated and cofinal, $\rho(H,D)<\infty$, and $\mathsf D(G_0)<\infty$.
\end{enumerate}
\end{definition}

\begin{lemma}
\label{3.12}
Let $D$ be a half-factorial monoid and $H\subset D$ an atomic saturated submonoid.\\
Then $\rho(H,D)\leq 1$.
\end{lemma}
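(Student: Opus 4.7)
The plan is to prove directly that for every $a\in H\setminus\mal D$ one has $\min\mathsf L_H(a)\le\min\mathsf L_D(a)$, since taking the supremum over such $a$ then yields $\rho(H,D)\le 1$. The basic idea is to lift a minimum-length $H$-factorization of $a$ to a $D$-factorization whose length cannot decrease, and then to invoke half-factoriality of $D$ to identify that length with $\min\mathsf L_D(a)$.

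First I would record the elementary consequence of saturation that $\mal H=H\cap\mal D$: if $u\in H\cap\mal D$, then $u\mid 1$ in $D$, and since $1\in H$ and $u\in H$, saturation gives $u\mid 1$ in $H$, hence $u\in\mal H$ (the reverse inclusion is obvious). In particular, every atom of $H$ is a non-unit of $D$.

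Next, fix $a\in H\setminus\mal D$; by the previous step $a\notin\mal H$, so pick a factorization $a=u_1\cdot\ldots\cdot u_n$ in $H$ with $u_i\in\mathcal A(H)$ and $n=\min\mathsf L_H(a)$. For each $i$, since $u_i\in H\setminus\mal D$ and $D$ is atomic, we can write $u_i=w_{i,1}\cdot\ldots\cdot w_{i,k_i}$ with $w_{i,j}\in\mathcal A(D)$ and $k_i\ge 1$. Concatenating these factorizations (absorbing any stray units into one of the atoms) gives a factorization of $a$ in $D$ of length $\sum_{i=1}^n k_i\ge n$.

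Finally I would use that $D$ is half-factorial: every factorization of $a$ in $D$ has the same length, so $\min\mathsf L_D(a)=\sum_{i=1}^n k_i\ge n=\min\mathsf L_H(a)$. Dividing yields $\min\mathsf L_H(a)/\min\mathsf L_D(a)\le 1$, and taking the supremum over $a\in H\setminus\mal D$ proves $\rho(H,D)\le 1$. The argument is short and there is no real obstacle; the only subtlety worth double-checking is the use of saturation to guarantee $k_i\ge 1$ for atoms $u_i$ of $H$, which is precisely what prevents a length collapse when one passes from $H$ to $D$.
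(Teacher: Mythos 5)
Your proof is correct and takes essentially the same route as the paper: the opening step (saturation forces $\mal D\cap H\subset\mal H$, so atoms of $H$ are non-units of $D$) is exactly the paper's first step. The only difference is that where the paper then invokes \cite[Proposition 3.6.6]{MR2194494} to get $\rho(H,D)\leq\rho(D)=1$, you unfold that citation into a direct lifting argument (refine a minimal $H$-factorization to a $D$-factorization of length at least $\min\mathsf L_H(a)$ and use half-factoriality of $D$ to identify its length with $\min\mathsf L_D(a)$), which is a valid and self-contained substitute.
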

\begin{proof}
Let $\eps\in\mal D\cap H$. Then $\eps\mid 1$ in $D$, and thus $\eps\mid 1$ in $H$, and therefore $\eps\in\mal H$. Now we find $\rho(H,D)\leq\rho(D)=1$ by \cite[Proposition 3.6.6]{MR2194494}.
\end{proof}

\begin{lemma}
\label{3.13}
Let $D$ be a monoid, $P\subset D$ a set of prime elements, $r\in\N$, and let $D_i\subset\widehat{D_i}=[p_i]\times\mal{\widehat D_i}$ be reduced finitely primary monoids such that $D=\mathcal F(P)\times D_1\times\ldots\times D_r$. Let $H\subset D$ be a saturated submonoid, $G=\mathsf q(D/H)$ its class group, and let $G$ be finite.\\
Then
\begin{enumerate}
 \item\label{3.13.1} $D$ is a reduced BF-monoid.
 \item\label{3.13.2} $H\subset D$ is a faithfully saturated submonoid and $H$ is also a reduced BF-monoid.
\end{enumerate}
\end{lemma}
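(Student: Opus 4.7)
The plan is to prove \ref{3.13.1} by a direct product argument and then use it as the backbone for \ref{3.13.2}, where every requirement in the definition of faithfully saturated is easy except for finiteness of $\rho(H,D)$. For \ref{3.13.1}, each $D_i$ is a reduced finitely primary monoid, hence a reduced BF-monoid by the structure theory of finitely primary monoids (see \cite[Theorem 2.9.2]{MR2194494}), while $\mathcal F(P)$ is free, hence reduced and factorial. Since reduced BF-monoids are closed under finite direct products (units of a product are products of units, and lengths of an element of a product are sums of the coordinate lengths), $D=\mathcal F(P)\times D_1\times\cdots\times D_r$ is a reduced BF-monoid.

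For \ref{3.13.2}, I would dispose of the easy requirements first. From $\mal D=\{1\}$ and the trivial inclusion $\mal H\subset\mal D$, one gets that $H$ is reduced. Saturation of $H\subset D$ is assumed and yields the standard identity $\mathsf q(H)\cap D=H$; in particular $H\cap\mal D=\mal H$, so every $a\in H\setminus\mal H$ is a non-unit of $D$, and any decomposition $a=u_1\cdots u_n$ with $u_i\in H\setminus\mal H$ refines in $D$ to a factorization of length at least $n$, forcing $n\leq\max\mathsf L_D(a)<\infty$. This simultaneously gives atomicity of $H$ and $\max\mathsf L_H(a)<\infty$, so $H$ is itself a reduced BF-monoid. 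Cofinality of $H$ in $D$ follows because $G$ is finite: for $d\in D$ the class $[d]_{D/H}$ has order dividing $|G|$, hence $d^{|G|}\in\mathsf q(H)\cap D=H$. Finally, $\mathsf D(G_0)\leq\mathsf D(G)<\infty$ is immediate from $G_0\subset G$.

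The main obstacle is the bound $\rho(H,D)<\infty$. The idea is to combine the finite elasticity of $D$ with a block-monoid decomposition of $D$-factorizations. Since each $D_i$ has rank $1$, $\rho(D_i)<\infty$ by \cite[Theorem 3.1.5]{MR2194494}, and because $\mathcal F(P)$ is half-factorial the standard product formula gives $\rho(D)\leq\max_{1\leq i\leq r}\rho(D_i)<\infty$. Given $a\in H\setminus\mal D$, take a shortest $D$-factorization $a=w_1\cdots w_m$ with $m=\min\mathsf L_D(a)$ and decompose the induced zero-sum sequence $[w_1]_{D/H}\cdots[w_m]_{D/H}\in\mathcal B(G_0)$ into minimal zero-sum subsequences $\mathbf T_1,\ldots,\mathbf T_\ell$, each of length at most $\mathsf D(G_0)$. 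Grouping the $w_i$ accordingly yields a factorization $a=y_1\cdots y_\ell$ in $H$ with each $y_j$ a non-unit of $H$ (using $\mathsf q(H)\cap D=H$) and $\ell\leq m$. Then $\min\mathsf L_D(y_j)\leq|\mathbf T_j|\leq\mathsf D(G_0)$, so $\max\mathsf L_D(y_j)\leq\rho(D)\mathsf D(G_0)$, and because atoms of $H$ are non-units of $D$, every $H$-factorization of $y_j$ has length at most $\max\mathsf L_D(y_j)$. Summing yields $\min\mathsf L_H(a)\leq\ell\,\rho(D)\mathsf D(G_0)\leq\rho(D)\mathsf D(G_0)\min\mathsf L_D(a)$, whence $\rho(H,D)\leq\rho(D)\mathsf D(G_0)<\infty$. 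This completes the verification that $H\subset D$ is faithfully saturated.
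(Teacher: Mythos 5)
Your proof is correct. The paper disposes of the lemma by citation alone: part 1 by the same direct-product observation you make, and part 2 by invoking \cite[Proposition 3.4.5.5]{MR2194494} (a saturated submonoid of a reduced BF-monoid is a reduced BF-monoid) and \cite[Theorem 3.6.7]{MR2194494} (finiteness of $G$ and $r$ give faithful saturation). You instead unpack those citations into a self-contained argument, and each step checks out: the identity $\mathsf q(H)\cap D=H$ for saturated submonoids, the refinement argument giving atomicity and the BF-property of $H$, cofinality via $d^{|G|}\in\mathsf q(H)\cap D$, and in particular the block-monoid decomposition of a shortest $D$-factorization into minimal zero-sum subsequences yielding $\rho(H,D)\leq\rho(D)\,\mathsf D(G_0)<\infty$ (with $\rho(D)<\infty$ because each $D_i$ has rank $1$). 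This last computation is essentially the mechanism behind the cited Theorem 3.6.7, so the mathematical content coincides; what your version buys is independence from that black box and an explicit bound on $\rho(H,D)$, at the cost of length. One cosmetic remark: the definition of $\rho(H,D)$ ranges over $a\in H\setminus\mal D$, which is exactly the set your decomposition argument handles, so no convention issues arise.
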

\begin{proof}\mbox{}
\begin{enumerate}
 \item Since $D$ is the direct product of reduced BF-monoids, $D$ is a reduced BF-monoid.
 \item Since, by part~\ref{3.13.1}, $D$ is a reduced BF-monoid, $H$ is a reduced BF-monoid by \cite[Proposition 3.4.5.5]{MR2194494}. Since $G$ and $r$ are finite, $H\subset D$ is faithfully saturated by \cite[Theorem 3.6.7]{MR2194494}.
\qedhere
\end{enumerate}
\end{proof}

The following lemma offers a refinement of \cite[Theorem 3.6.4]{MR2194494} for faithfully saturated submonoids $H\subset D$ such that $\rho(H,D)=1$. In our application, this new result yields a crucial refinement from $\mathsf c(H)\leq 6$ to $\mathsf c(H)\leq 4$.

\begin{lemma}
\label{3.14}
Let $D$ be a reduced atomic half-factorial monoid, $H\subset D$ a faithfully saturated submonoid with $\rho(H,D)=1$, $G=\mathsf q(D/H)$ its class group, $\mathsf D=\mathsf D(G)$ its Davenport constant, and suppose each class in $G$ contain some $u\in\mathcal A(D)$.\\
Then
\[
\mathsf c(H)\leq\max\left\lbrace\left\lfloor\frac{(\mathsf D+1)}{2}\mathsf c(D)\right\rfloor,\mathsf D^2\right\rbrace.
\]
\begin{enumerate}
 \item \label{3.14.1} $\mathsf c(H)\leq\max\left\lbrace\left\lfloor\frac{(\mathsf D+1)}{2}\mathsf c(D)\right\rfloor,\mathsf D^2\right\rbrace$.
 \item \label{3.14.2} If $a,\,c\in H$ and $x\in\mathsf Z_H(c)$, then
\[
\mathsf t_H(a,x)\leq|x|\left(1+\mathsf D\frac{D-1}{2}\right)+\mathsf D\mathsf t_D(a,\mathsf Z_D(c)).
\]
\end{enumerate}
\end{lemma}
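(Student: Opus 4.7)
The plan is to refine the general transfer result [Theorem 3.6.4, MR2194494] by exploiting the additional hypotheses that $D$ is half-factorial and $\rho(H,D)=1$. Both parts follow the same two-step template: first transfer the problem from $H$ to $D$ using the embedding $\iota:\mathsf Z(H)\hookrightarrow\mathsf Z(D)$ coming from faithful saturation, then lift back to $H$ by regrouping $D$-atoms into $H$-atoms (i.e.\ zero-sum sequences in $G$ of length at most $\mathsf D$).

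For part \ref{3.14.1}, given $a\in H$ and $z,z'\in\mathsf Z_H(a)$, I would set $\bar z,\bar z'\in\mathsf Z_D(a)$ to be the images under $\iota$. Since $D$ is half-factorial, $|\bar z|=|\bar z'|$, and by \autoref{3.12} combined with the assumption $\rho(H,D)=1$ we have $\min\mathsf L_H(b)=\min\mathsf L_D(b)$ for every $b\in H$. Then I build a $\mathsf c(D)$-chain $\bar z=w_0,w_1,\ldots,w_\ell=\bar z'$ in $\mathsf Z_D(a)$; each transition replaces a sub-factorization $y_{i-1}\mid w_{i-1}$ by $y_i\mid w_i$ of equal length $\leq\mathsf c(D)$ and with a common product $b_i$ which, by saturation of $H\subset D$, lies in $H$. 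I then lift each $w_i$ to $z_i\in\mathsf Z_H(a)$ by regrouping the $D$-atoms into $H$-atoms, keeping the groupings outside the modified block unchanged. The key estimate is that on each side of the block we have at most $\mathsf c(D)$ atoms grouped into at most $\lceil\mathsf c(D)/1\rceil$ but at least $\lceil\mathsf c(D)/\mathsf D\rceil$ $H$-atoms; passing through a canonical $H$-factorization of $b_i$ (which has $H$-length $\leq\mathsf c(D)$ thanks to $\rho(H,D)=1$) gives $\mathsf d_H(z_{i-1},z_i)\leq\lfloor\frac{\mathsf D+1}{2}\mathsf c(D)\rfloor$. The alternative bound $\mathsf D^2$ handles the degenerate regime where $\mathsf c(D)$ is small: if $\bar z=\bar z'$ but $z\neq z'$, then the $H$-groupings of the same multiset of $D$-atoms differ, and since each side has at most $\mathsf D$ $H$-atoms of $D$-length $\leq\mathsf D$ inside each conflict zone, the $H$-distance is bounded by $\mathsf D^2$.

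For part \ref{3.14.2}, the plan is parallel. Given $z\in\mathsf Z_H(a)$ and $x\in\mathsf Z_H(c)$, I first apply the tameness of $D$ at the divisor $\bar x\in\mathsf Z_D(c)$ to obtain $\bar z''\in\mathsf Z_D(a)\cap\bar x\mathsf Z_D(D)$ with $\mathsf d_D(\bar z,\bar z'')\leq\mathsf t_D(a,\mathsf Z_D(c))$. The factor $\mathsf D$ in front of $\mathsf t_D(a,\mathsf Z_D(c))$ appears when lifting each $D$-move to an $H$-move, because each of the $D$-atoms involved can belong to an $H$-atom containing up to $\mathsf D{-}1$ further $D$-atoms which must also be relocated. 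The additive term $|x|(1+\mathsf D(\mathsf D-1)/2)$ comes from fixing the groupings of the $|x|$ $H$-atoms of $x$: each such $H$-atom consists of at most $\mathsf D$ $D$-atoms, and transporting them into their prescribed target positions costs one base move plus at most $\binom{\mathsf D}{2}$ pairwise adjustments per $H$-atom, yielding $1+\mathsf D(\mathsf D-1)/2$ per atom of $x$.

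The main obstacle will be the careful bookkeeping that removes the extra additive $\mathsf D$ term appearing in the general bound in [MR2194494]. The crux is to use $\rho(H,D)=1$ together with half-factoriality of $D$ to guarantee that the canonical $H$-factorization of each intermediate $b_i\in H$ has $H$-length equal to its $D$-length, so that the regrouping distance is governed by $\max(|u_{i-1}|,|u_i|)$ rather than by their sum, giving $\lfloor\frac{\mathsf D+1}{2}\mathsf c(D)\rfloor$ in place of $\mathsf D\cdot\mathsf c(D)$. The rest is a direct transfer argument via the faithfully saturated embedding $H\subset D$.
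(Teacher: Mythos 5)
Your overall architecture (choose $D$-factorizations refining the given $H$-factorizations, run a $\mathsf c(D)$-chain in $\mathsf Z_D(a)$, lift each step back by regrouping $D$-atoms into $H$-atoms, and cover the residual ambiguity of regrouping by the $\mathsf D^2$ term) is the same as the paper's, but two of your central claims do not hold as stated, and the missing ingredient in both cases is the auxiliary monoid $Y=\pi_D^{-1}(H)\subset\mathsf Z(D)$, which you never introduce. First, the $\mathsf D^2$ regime: you assert that if $z\neq z'$ are two $H$-regroupings of the same multiset of $D$-atoms, then ``each conflict zone'' is small and hence $\mathsf d_H(z,z')\leq\mathsf D^2$. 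This is false as a direct distance bound: with $G=\mathbb Z/2\mathbb Z$ and $2n$ atoms $u_1,\ldots,u_{2n}$ all in the nontrivial class, the pairings $(u_1u_2)(u_3u_4)\cdots$ and $(u_2u_3)(u_4u_5)\cdots(u_{2n}u_1)$ share no common $H$-atom, so their distance is $n$, unbounded. What is true, and what the proof needs, is that the two regroupings are joined by a $\mathsf D^2$-\emph{chain}; the paper gets this from the fact that $Y=\pi_D^{-1}(H)$ is a saturated (hence Krull) submonoid of the free monoid $\mathsf Z(D)$ with $\mathsf c(Y)\leq\mathsf D$ and $|v|\leq\mathsf D$ for every $v\in\mathcal A(Y)$ (via \cite[Theorem 3.4.10.5 and Proposition 3.4.5.3]{MR2194494}). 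Without invoking the catenary degree of $Y$ there is no way to control two globally incompatible regroupings.

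Second, the mechanism you give for the factor $\tfrac{\mathsf D+1}{2}$ is not the right one. You claim that $\rho(H,D)=1$ and half-factoriality of $D$ force the canonical $H$-factorization of each intermediate element to have ``$H$-length equal to its $D$-length''; this is false (an $H$-atom built from two $D$-atoms in nontrivial classes has $H$-length $1$ and $D$-length $2$), and note also that $\rho(H,D)\le 1$ only gives $\min\mathsf L_H\le\min\mathsf L_D$, not equality. The actual source of the saving is the opposite phenomenon: when $\tilde z=\tilde w\tilde y$ and $\tilde z'=\tilde w\tilde y'$, the $H$-groupings straddle the boundary of $\gcd(\tilde z,\tilde z')$, so one cannot ``keep the groupings outside the modified block unchanged''; instead one chooses (by \cite[Proposition 3.4.5.6]{MR2194494}) a divisor $\tilde w_0\mid\tilde w$ with $\tilde w_0\tilde y\in Y$, $|\tilde w_0|\le(\mathsf D-1)|\tilde y|$, and containing no atom in the trivial class, and then the induced $H$-factorization of the disturbed part contributes only $\max\{|\tilde y|,|\tilde y'|\}+\lfloor|\tilde w_0|/2\rfloor\le\lfloor\tfrac{\mathsf D+1}{2}\mathsf d(\tilde z,\tilde z')\rfloor$, because atoms in nontrivial classes group into $H$-atoms of $D$-length $\ge 2$. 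Your sketch of part~\ref{3.14.2} inherits both problems (the heuristic ``one base move plus $\binom{\mathsf D}{2}$ adjustments per atom of $x$'' does not correspond to a chain argument), so as it stands the proposal has genuine gaps in exactly the places where the refinement over \cite[Theorem 3.6.4]{MR2194494} has to be earned.
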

\begin{proof}
We start by developing the same machinery to compare the factorizations in $H$ with those in $D$ as in \cite[Proof of Theorem 3.6.4]{MR2194494}. Let $\pi_H:\mathsf Z(H)\rightarrow H$ and $\pi_D:\mathsf Z(D)\rightarrow D$ be the factorization homomorphisms and let $Y=\pi_D^{-1}(H)\subset\mathsf Z(D)$. Let $f:\mathsf Z(D)\rightarrow D/H$ be defined by $f(z)=[\pi_D(z)]_{D/H}$. Then $f$ is an epimorphism and $Y=f^{-1}(0)$. Now \cite[Proposition 2.5.1]{MR2194494} implies that $Y\subset\mathsf Z(D)$ is saturated, that $Y$ is a Krull monoid, and that $f$ induces an isomorphism $f^*:\mathsf Z(D)/Y\rightarrow D/H$, since $Y\subset\mathsf Z(D)$ is cofinal. By \cite[Theorem 3.4.10.5]{MR2194494}, we have $\mathsf c(Y)\leq\mathsf D$, and by \cite[Proposition 3.4.5.3]{MR2194494} it follows that $|v|\leq\mathsf D$ for all $v\in\mathcal A(Y)$. If $v\in Y$, then there exists a factorization $y\in\mathsf Z_H(\pi_D(v))$ such that $|y|\leq |v|$.\\
If $\tilde z\in Y$ and $z\in\mathsf Z(H)$, then we say that $z$ is induced by $\tilde z$ if $z=z_1\mdots z_m$ and $\tilde z=\tilde z_1\mdots\tilde z_m$, where $\tilde z_j\in\mathcal A(Y)\subset\mathsf Z(D)$, $z_j\in\mathsf Z_H(\pi_D(\tilde z_j))$ and $|z_j|\leq |\tilde z_j|$ for all $j\in[1,m]$. If $z$ is induced by $\tilde z$, then $\pi_H(z)=\pi_D(\tilde z)$ and $|z|\leq |\tilde z|$. By definition, every factorization $\tilde z\in Y$ induces some factorization $z\in\mathsf Z(H)$. Also, if $z$ is induced by $\tilde z$ and $z'$ is induced by $\tilde z'$, then $zz'$ is induced by $\tilde z\tilde z'$.\\
If $x=u_1\mdots u_m\in\mathsf Z(H)$, where $u_j\in\mathcal A(H)$ and $\tilde u_j\in\mathsf Z_D(u_j)$, then $\tilde u_j\in\mathcal A(Y)$ and $|\tilde u_j|\leq\mathsf D$ for all $j\in [1,m]$ by \cite[Proposition 3.4.5.3]{MR2194494}. Hence $x$ is induced by $\tilde x=\tilde u_1\mdots\tilde u_m$, and $|\tilde x|\leq \mathsf D|x|$.\\
We prove the following assertions:
\begin{itemize}
 \item[\textbf{A0}] Let $\tilde z\in Y$ with $\tilde z=a_1\mdots a_mb_1\mdots b_n$, where $a_1,\,\ldots,a_m,\,b_1,\ldots,b_n\in\mathcal A(H)$, $[a_1]_{D/H}=\ldots=[a_m]_{D/H}=\mathbf 0_{D/H}$, and $[b_1]_{D/H},\ldots,[b_n]_{D/H}\neq\mathbf 0_{D/H}$. For any $z\in\mathsf Z(H)$ such that $z$ is induced by $\tilde z$, we have $|z|=m+\left\lfloor\frac{n}{2}\right\rfloor$.
 \item[\textbf{A1}] For any $\tilde z,\,\tilde z'\in Y$, there exist $z,\,z'\in\mathsf Z(H)$ such that $z$ is induced by $\tilde z$, $z'$ is induced by $\tilde z'$, and $\mathsf d(z,z')\leq\left\lfloor\frac{\mathsf D+1}{2}\mathsf d(\tilde z,\tilde z')\right\rfloor$.
 \item[\textbf{A2}] If $a\in H$, $\tilde z\in Y$, and $z,\,z'\in\mathsf Z_H(a)$ are both induced by $\tilde z$, then there exists a $\mathsf D^2$-chain of factorizations in $\mathsf Z_H(a)$ concatenating $z$ and $z'$.
\end{itemize}
\begin{proof}[Proof of A0]
Let $\tilde z\in Y$ with $\tilde z=a_1\mdots a_mb_1\mdots b_n$, where $a_1,\,\ldots,a_m,\,b_1,\ldots,b_n\in\mathcal A(H)$, $[a_1]_{D/H}=\ldots=[a_m]_{D/H}=0$, and $[b_1]_{D/H}=\ldots=[b_n]_{D/H}\neq\mathsf 0_{D/H}$. Let now $z\in\mathsf Z(H)$ be induced by $\tilde z$. We have $a_i\in\mathcal A(H)$ for all $i\in [1,m]$ and---after renumbering if necessary---$b_1\mdots b_{j_1},\,b_{j_1+1}\mdots b_{j_2},\ldots,b_{j_{k-1}+1}\mdots b_{j_k}\in\mathcal A(H)$ for some $k\in\N$ and $1<j_1+1<j_2<j_2+1<\ldots<j_{k-1}+1<j_k<n$ such that $a_1\mdots a_m(b_1\mdots b_{j_1})(b_{j_1+1}\mdots b_{j_2})\mdots(b_{j_{k-1}+1}\mdots b_{j_k})=z$. Then we have $|z|=m+\left\lfloor\frac{n}{2}\right\rfloor$.
\end{proof}
\begin{proof}[Proof of A1]
Suppose that $\tilde z,\,\tilde z'\in Y$, $\tilde w=\gcd(\tilde z,\tilde z')\in\mathsf Z(D)$, $\tilde z=\tilde w\tilde y$, and $\tilde z'=\tilde w\tilde y'$, where $\tilde y,\,\tilde y'\in\mathsf Z(D)$.
By \cite[Proposition 3.4.5.6]{MR2194494}, there exists some $\tilde w_0\in\mathsf Z(D)$ such that $\tilde w_0\mid\tilde w$, $\tilde w_0\tilde y\in Y$, and $|\tilde w_0|\leq(\mathsf D-1)|\tilde y|$. We may assume that there is no $a\in\mathcal A(D)$ with $a\mid\tilde w_0$ and $[a]_{D/H}=0$. We set $\tilde w_1=\tilde w_0^{-1}\tilde w$. Since $\tilde z=\tilde w_1(\tilde w_0\tilde y)\in Y$ and $\tilde w_0\tilde y\in Y$, we obtain $\tilde w_1\in Y$, and since $\tilde z'=\tilde w_1(\tilde w_0\tilde y')\in Y$ it follows that $\tilde w_0\tilde y'\in Y$. Let $v,\,u,\,u'\in\mathsf Z(H)$ be such that $v$ is induced by $\tilde w_0^{-1}\tilde w$, $u$ is induced by $\tilde w_0\tilde y$ and $u'$ is induced by $\tilde w_0\tilde y'$. Then $z=uv$ is induced by $\tilde w\tilde y=\tilde z$, $z'=u'v$ is induced by $\tilde w\tilde y'=\tilde z'$, and, by part \textbf{A1},
\[
\mathsf d(z,z')\leq\max\lbrace |u|,|u'|\rbrace\leq\max\lbrace |\tilde y|,|\tilde y'|\rbrace+\left\lfloor\frac{|\tilde w_0|}{2}\right\rfloor\leq\left\lfloor\frac{\mathsf D+1}{2}\mathsf d(\tilde z,\tilde z')\right\rfloor.
\qedhere
\]
\end{proof}
\begin{proof}[Proof of A2]
For every $\tilde v\in\mathcal A(Y)$, we fix a factorization $\tilde v^*\in\mathsf Z(H)$ which is induced by $\tilde v$, and, for $\bar y=\tilde v_1\mdots\tilde v_s\in\mathsf Z(Y)$, we set $\bar y^*=\tilde v_1^*\mdots\tilde v_s^*\in\mathsf Z(H)$. Then $\bar y^*$ is induced by $\pi_Y(\bar y)$, $|\bar y^*|\leq |\pi_Y(\bar y)|\leq \mathsf D|\bar y|$, and if $\bar y_1,\,\bar y_2\in\mathsf Z(Y)$, then $\mathsf d(\bar y_1^*,\bar y_2^*)\leq\left\lfloor\frac{\mathsf D+1}{2}\mathsf d(\bar y_1,\bar y_2)\right\rfloor$ by \textbf{A1}.\\
Let now $z,\,z'\in\mathsf Z_H(a)$ be both induced by $\tilde z$. Then $\tilde z=\tilde v_1\mdots\tilde v_r=\tilde v_1'\mdots\tilde v_{r'}'$, $z=v_1\mdots v_r$, and $z'=v_1'\mdots v_{r'}'$, where $\tilde v_i,\,\tilde v_i'\in\mathcal A(Y)$, $v_i$ is induced by $\tilde v_i$, and $v'_i$ is induced by $\tilde v_i'$. Since $\bar y=\tilde v_1\mdots\tilde v_r\in\mathsf Z_Y(\tilde z)$, $\bar y'=\tilde v_1'\mdots\tilde v_{r'}'\in\mathsf Z_Y(\tilde z)$, and $\mathsf c(Y)\leq\mathsf D$, there exists a $\mathsf D$-chain $\bar y=\bar y_0,\bar y_1,\ldots,\bar y_l=\bar y'$ in $\mathsf Z_Y(\tilde z)$ concatenating $\bar y$ and $\bar y'$ in $\mathsf Z_Y(\tilde z)$. Then $\bar y_0^*,\bar y_1^*,\ldots,\bar y_l^*$ is a $\mathsf D$-chain in $\mathsf Z_H(a)$ concatenating $\bar y^*$ and $\bar y'^*$. We have $\bar y_0^*=\tilde v_1^*\mdots\tilde v_r^*$, $z=v_1\mdots v_r$, and since both $v_i$ and $v_i^*$ are induced by $\tilde v_i$, it follows that $\max\lbrace |v_i|,|v_i^*|\rbrace\leq |\tilde v_i|\leq \mathsf D$. For $i\in [0,r]$, we set $z_i=\tilde v_1^*\mdots\tilde v_i^*v_{i+1}\mdots v_r\in\mathsf Z_H(a)$. Then $z=z_0,z_1,\ldots,z_r=\bar y^*$ is a $\mathsf D^2$-chain concatenating $z$ and $\bar y^*$. In the same way, we get a $\mathsf D$-chain concatenating $\bar y'^*$ and $z'$. Connecting these three chains, we get a $\mathsf D^2$-chain in $\mathsf Z_H(a)$ concatenating $z$ and $z'$.
\end{proof}
\begin{enumerate}
 \item Assume $a\in H$ and $z,\,z'\in\mathsf Z_H(a)$. Let $\tilde z,\,\tilde z'\in Y$ be such that $z$ is induced by $\tilde z$ and $z'$ is induced by $\tilde z'$. Then $\tilde z,\,\tilde z'\in\mathsf Z_D(a)$, and therefore there exists a $\mathsf c(D)$-chain $\tilde z=\tilde z_0,\tilde z_1,\ldots,\tilde z_l=\tilde z_l'$ in $\mathsf Z_D(a)$. For $i\in [0,l-1]$, \textbf{A1} gives the existence of factorizations $z'_i,\,z''_i\in\mathsf Z_H(a)$ such that $z'_i$ is induced by $\tilde z_i$, $z_i''$ is induced by $\tilde z_{i+1}$, and $\mathsf d(z_i',z_i'')\leq\left\lfloor\frac{\mathsf D+1}{2}\mathsf c(D)\right\rfloor$. By \textbf{A2}, there exist $\mathsf D^2$-chains of factorizations in $\mathsf Z_H(a)$ concatenating $z$ and $z'_0$, $z_i''$ and $z_{i+1}'$ for all $i\in [0,l-1]$, and $z_{l-1}$ and $z'$. Connecting all these chains, we obtain a $\max\left\lbrace\left\lfloor\frac{(\mathsf D+1)}{2}\mathsf c(D)\right\rfloor,\mathsf D^2\right\rbrace$-chain concatenating $z$ and $z'$.
 \item Suppose that $a,\,c\in H$, $x\in\mathsf Z_H(c)$, $z\in\mathsf Z_H(a)$, and $\mathsf Z_H(a)\cap x\mathsf Z(H)\neq\emptyset$. We set $\mathsf t=\mathsf t_D(a,\mathsf Z_D(c))$, and we must prove that there exists some $z'\in\mathsf Z_H(a)\cap x\mathsf Z(H)$ such that
\[
 \mathsf d(z,z')\leq|x|\left(1+\mathsf D\frac{D-1}{2}\right)+\mathsf D\mathsf t.
\]
Let $\tilde x\in Y$ be such that $x$ is induced by $\tilde x$ and $|\tilde x|\leq\mathsf D|x|$. Suppose that $z=u_1\mdots u_m$ and $\tilde z=\tilde u_1\mdots\tilde u_m$, where $u_j\in\mathcal A(H)$ and $\tilde u_j\in\mathsf Z_D(u_j)$ for all $j\in[1,m]$. Then $z$ is induced by $\tilde z$. Since $\mathsf Z_H(a)\cap x\mathsf Z(H)\neq\emptyset$, we obtain $\pi_D(\tilde x)=\pi_H(x)\mid a$, hence $\mathsf Z_D(a)\cap\tilde x\mathsf Z(D)\neq\emptyset$, and therefore there exists some $\tilde z'\in\mathsf Z_D(a)\cap\tilde x\mathsf Z(D)$ such that $\mathsf d(\tilde z,\tilde z')\leq\mathsf t_D(a,\tilde x)\leq\mathsf t$. After renumbering (if necessary), we may assume that
\[
 \gcd(\tilde z,\tilde z')=\prod_{j=1}^k\tilde u_j\prod_{j=k+1}^my_j,\quad\mbox{and we set}\quad\tilde y'=\tilde z'\prod_{j=1}^k\tilde u_j^{-1},
\]
where $k\in[0,m]$, $y_j\in\mathsf Z(D)$, $y_j\mid\tilde u_j$, $y_j\neq\tilde u_j$, and thus $|y_j|\leq|\tilde u_j|-1\leq\mathsf D-1$ for all $j\in[k+1,m]$. Hence we obtain
\[
 \mathsf t\geq\mathsf d(\tilde z,\tilde z')=\mathsf d\left(\prod_{j=k+1}^m\tilde u_jy_j^{-1},\tilde y'\prod_{j=k+1}^my_j^{-1}\right)\geq\max\lbrace m-k,|\tilde y'|-(m-k)(\mathsf D-1)\rbrace,
\]
and therefore $|\tilde y'|\leq\mathsf t+(m-k)(\mathsf D-1)\leq\mathsf t\mathsf D$. After renumbering again (if necessary), we may suppose that $\tilde x_1=\gcd(\tilde u_1\mdots\tilde u_k,\tilde x)=y'_{l+1}\mdots y'_k$, where $l\in[0,k]$, $y_j'\in\mathsf Z(D)$ and $1\neq y_j'\mid\tilde u_j$ for all $j\in[l+1,k]$. Then we have $k-l\leq|\tilde x_1|\leq|\tilde x|\leq\mathsf D|x|$.

Since $\tilde x\mid\tilde z'$, it follows that $\tilde x_1^{-1}\tilde x\mid\tilde x_1^{-1}\tilde z'=\tilde y'(\tilde x_1^{-1}\tilde u_1\mdots\tilde u_k)$, and since $\gcd(\tilde x_1^{-1}\tilde u_1\mdots\tilde u_k,\tilde x_1^{-1}\tilde x)=1$, we deduce $\tilde x_1^{-1}\tilde x\mid\tilde y'$. Hence $\tilde x\mid\tilde y'\tilde x_1\mid\tilde y'\tilde u_{l+1}\mdots\tilde u_k$, and we set
\[
 \tilde y=\tilde x^{-1}\tilde y'\tilde u_{l+1}\mdots\tilde u_k=(\tilde x\tilde u_1\mdots\tilde u_l)^{-1}\tilde z'\in\mathsf Z(D).
\]
Since $\tilde z',\,\tilde x,\,\tilde u_1,\ldots,\tilde u_l\in Y$ and $Y\subset\mathsf Z(D)$ is a saturated submonoid, we get $\tilde y\in Y$. Now we set $\tilde y=\tilde y_1\tilde y_2$ with $\tilde y_1=(\tilde x_1^{-1}\tilde x)^{-1}\tilde y'$ and $\tilde y_2=(\tilde u_{l+1}y_{l+1}'^{-1})\mdots(\tilde u_ky_k'^{-1})$. Let $y\in\mathsf Z(H)$ be induced by $\tilde y$. then $z'=xyu_1\mdots u_l\in\mathsf Z_H(a)\cap x\mathsf Z(H)$ is induced by $\tilde z'$, and $\mathsf d(z,z')=\mathsf d(u_{l+1}\mdots u_m,xy)\leq\max\lbrace m-l,|x|+|y|\rbrace$. Now we start by computing $|y|$. Since, for all $j\in[l+1,k]$, there is no $a\in\mathcal A(D)$ such that $[a]_{D/H}=\mathbf 0_{D/H}$ and $a\mid\tilde u_jy_j^{-1}$, we find using part \textbf{A0}
\[
 |y|\leq |\tilde y_1|+\left\lfloor\frac{|\tilde y_2|}{2}\right\rfloor\leq |\tilde y'|+\left\lfloor\frac{(k-l)(\mathsf D-1)}{2}\right\rfloor\leq\mathsf t\mathsf D+\frac{\mathsf D(\mathsf D-1)}{2}|x|=\mathsf D\left(\mathsf t+\frac{\mathsf D-1}{2}|x|\right).
\]
Furthermore, we have
\[
 m-l=(m-k)+(k-l)\leq\mathsf t+\mathsf D|x|,\quad
 |x|+|y|\leq|x|+\mathsf D\left(\mathsf t+\frac{\mathsf D-1}{2}|x|\right),
\]
and $\mathsf D\geq 2$ implies
\[
 \mathsf t+\mathsf D|x|\leq |x|\left(1+\mathsf D\frac{D-1}{2}\right)+\mathsf D\mathsf t.
\]
Hence we obtain the asserted bound for $\mathsf d(z,z')$.
\qedhere
\end{enumerate}
\end{proof}

\begin{lemma}
\label{3.15}
Let $D$ be a monoid, $P\subset D$ a set of prime elements, $r\in\N$, and let $D_i\subset\widehat{D_i}=[p_i]\times\mal{\widehat D_i}$ be reduced half-factorial but not factorial monoids of type $(1,k_i)$ with $k_i\in\lbrace 1,2\rbrace$ for $i\in[1,r]$ such that $D=\mathcal F(P)\times D_1\times\ldots\times D_r$. Let $H\subset D$ be a saturated submonoid, $G=\mathsf q(D/H)$ its class group, and suppose $G$ is finite with each class in $G$ containing some $p\in P$.\\
Then
\begin{enumerate}
 \item \label{3.15.1} $2\leq\mathsf c(D)=\max\lbrace\mathsf c(D_1),\ldots,\mathsf c(D_r)\rbrace\leq\max\lbrace k_1,\ldots,k_r\rbrace+1\leq 3$ and $D$ is half-factorial.\\
In particular, $\mathsf c(D)=2$ and $\mathsf t(D)=2$ if $k_1=\ldots=k_r=1$.
 \item \label{3.15.2} If $|G|=1$, then $\mathsf c(H)=\mathsf c(D)$, $\mathsf t(H)=\mathsf t(D)$, and $H$ is half-factorial.
 \item \label{3.15.3} If $|G|\geq 3$, then $(\mathsf D(G))^2\geq\mathsf c(H)\geq 3$ and $\min\triangle(H)=1$.
 \item \label{3.15.4} If $|G|=2$, then $\mathsf c(H)\leq 4$ and $\rho(H)\leq 2$.
\end{enumerate}
\end{lemma}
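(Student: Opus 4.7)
For part~\ref{3.15.1}, \autoref{3.9}.\ref{3.9.2} gives $\cmon(D_i) = \mathsf c(D_i) \leq \mathsf t(D_i) \leq k_i + 1$ for each $i$, and $D_i$ being half-factorial but not factorial forces $\mathsf c(D_i) \geq 2$. The free factor $\mathcal F(P)$ contributes only primes, which affect neither the catenary nor the tame degree, and the standard product formulas for direct products of BF-monoids give $\mathsf c(D) = \max_i \mathsf c(D_i)$ (and similarly for $\mathsf t$); half-factoriality of $D$ is inherited from the factors. Inserting $k_i \leq 2$ produces the asserted chain $2 \leq \mathsf c(D) \leq 3$, and the special case where all $k_i = 1$ yields $\mathsf c(D) = \mathsf t(D) = 2$.

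For part~\ref{3.15.2}, I would observe that $|G| = 1$ forces $H = D$: the hypothesis gives $D \subset \mathsf q(H)$, and since $H \subset D$ is saturated one has $H = D \cap \mathsf q(H) = D$. All the stated equalities and half-factoriality of $H$ follow immediately from part~\ref{3.15.1}.

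For parts~\ref{3.15.3} and~\ref{3.15.4} I set up the hypotheses of \autoref{3.14}. By \autoref{3.13}.\ref{3.13.2}, $H \subset D$ is faithfully saturated and both are reduced BF-monoids; by part~\ref{3.15.1}, $D$ is half-factorial; and \autoref{3.12} yields $\rho(H, D) \leq 1$, while applying the class-representative hypothesis to the zero class of $G$ produces a prime $p \in P \cap H$ with $\min\mathsf L_H(p) = \min\mathsf L_D(p) = 1$, so $\rho(H, D) = 1$. Hence \autoref{3.14}.\ref{3.14.1} applies with $\mathsf D = \mathsf D(G)$ and $\mathsf c(D) \leq 3$, giving the common bound $\mathsf c(H) \leq \max\lbrace\lfloor 3(\mathsf D+1)/2\rfloor, \mathsf D^2\rbrace$.

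In part~\ref{3.15.3}, \autoref{3.6}.\ref{3.6.1} provides the lower bounds $\min\triangle(H) = 1$ and $\mathsf c(H) \geq 3$; for the upper bound, $|G| \geq 3$ entails $\mathsf D(G) \geq 3$, under which $\mathsf D^2 \geq \lfloor 3(\mathsf D+1)/2\rfloor$, so $\mathsf c(H) \leq (\mathsf D(G))^2$. Part~\ref{3.15.4} is the main obstacle: with $\mathsf D = 2$ and $\mathsf c(D) \leq 3$, the quantity $\lfloor 3(\mathsf D+1)/2\rfloor = 4$ coincides with $\mathsf D^2 = 4$, yielding $\mathsf c(H) \leq 4$---this is precisely where the refined \autoref{3.14} is essential, since weaker forms of the estimate would only give $\mathsf c(H) \leq 6$ as noted in the paper. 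Finally, \autoref{3.6}.\ref{3.6.2} applied with $T = D_1 \times \cdots \times D_r$ yields $\rho(H) \leq \mathsf D(G)\rho(T) = 2$, using that $T$ is a product of half-factorial monoids and hence $\rho(T) = 1$.
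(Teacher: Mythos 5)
Your proof is correct and follows essentially the same route as the paper's: part 1 via \autoref{3.9}.\ref{3.9.2} and the product formulas, part 2 via $H=D$, and parts 3--4 via \autoref{3.13}, \autoref{3.12}, \autoref{3.6}, and the refined catenary bound of \autoref{3.14}. The only deviations are cosmetic: you invoke \autoref{3.14} for the upper bound in part 3 as well (the paper instead uses the coarser bound from \cite[Theorem 3.6.4.1]{MR2194494}, which yields the same $(\mathsf D(G))^2$ there), and you explicitly verify $\rho(H,D)=1$ via a prime in the trivial class rather than only $\rho(H,D)\leq 1$ --- a small point the paper glosses over even though \autoref{3.14} formally requires equality.
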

\begin{proof}\  
\begin{enumerate}
 \item By \autoref{3.13}.\ref{3.13.1}, $D$ is atomic. Trivially, we have $\mathsf c(\mathcal F(P))=0$. By \autoref{3.9}.\ref{3.9.2} and the fact that $D_i$ is not factorial, we find $2\leq\mathsf c(D_i)\leq k_i+1\leq 3$ for all $i\in[1,r]$. By \cite[Proposition 1.6.8]{MR2194494}, we find
\begin{align*}
 \mathsf c(D) &= \max\lbrace\mathsf c(\mathcal F(P)),\mathsf c(D_1),\ldots,\mathsf c(D_r)\rbrace \\
&= \max\lbrace \mathsf c(D_1),\ldots,\mathsf c(D_r)\rbrace \\
&=\max\lbrace k_1,\ldots,k_r\rbrace+1 \leq 3.
\end{align*}
Thus the first part of the assertion follows. Since $D$ is the direct product of half-factorial monoids, $D$ is half-factorial by \cite[Proposition 1.4.5]{MR2194494}. We have $\mathsf t(D_i)=2$ if $k_i=1$ for all $i\in [1,r]$ by \autoref{3.9}.\ref{3.9.2}. Now $\mathsf t(D)=2$ follows by \cite[Proposition 1.6.8]{MR2194494}.
 \item Here we have $H=D$ and thus the assertion follows from part~\ref{3.15.1}.
\end{enumerate}
Before the proof of the two remaining parts, we make the following observations. By \autoref{3.13}.\ref{3.13.2}, $H$ is atomic, $H\subset D$ is a faithfully saturated submonoid, and, by \autoref{3.12}, we have $\rho(H,D)\leq 1$.
\begin{enumerate}
 \item[3.] By part~\ref{3.15.1}, we have $\mathsf c(D)\leq 3$, by \autoref{3.6}.\ref{3.6.1}, we have $\min\triangle(H)=1$, and, by \cite[Lemma 1.4.9.2]{MR2194494}, we have $\mathsf D(G)\geq 3$. Using \cite[Theorem 3.6.4.1]{MR2194494}, we find
\[
 3\leq\mathsf D(G)\leq\mathsf c(H)\leq\rho(H,D)\max\lbrace\mathsf c(D),\mathsf D(G)\rbrace\mathsf D(G)=(\mathsf D(G))^2.
\]
 \item[4.] Since $|G|=2$, we have $\mathsf D(G)=2$, and since $D_1\times\ldots\times D_r$ is half-factorial, i.e., $\rho(D_1\times\ldots\times D_r)=1$, we find $\rho(H)\leq 2$ by \autoref{3.6}.\ref{3.6.2}. When we apply \autoref{3.14}.\ref{3.14.1}, we find
\[
\mathsf c(H)\leq\max\left\lbrace\left\lfloor(\mathsf D(G)+1)\frac{\mathsf c(D)}{2}\right\rfloor,\mathsf D(G)^2\right\rbrace\leq\left\lbrace\left\lfloor\frac92\right\rfloor,4\right\rbrace=4.
\qedhere
\]
\end{enumerate}
\end{proof}

For the rest of this section, we define additional shorthand notation. Let $D$ be a monoid, $P\subset D$ a set of prime elements, and $T\subset D$ a submonoid such that $D=\mathcal F(P)\times T$. Let $H\subset D$ be a saturated submonoid, $G=\mathsf q(D/H)=\mathsf q(D)/\mathsf q(H)$ its class group, suppose each $g\in G$ contains some $p\in P$, and let $\mathcal B(G,T,\iota)$ be the $T$-block monoid over $G$ defined by the homomorphism $\iota:T\rightarrow G$, $\iota(t)=[t]_{D/H}$.\\
For a subset $S\subset\mathcal B(G,T,\iota)$ and an element $g\in G$, we set $S_g=S\cap\iota^{-1}(\lbrace g\rbrace)$.

\begin{lemma}
\label{3.16}
Let $D$ be a monoid, $P\subset D$ a set of prime elements, $r\in\N$, and let $D_i\subset\widehat{D_i}=[p_i]\times\mal{\widehat{D_i}}$ be reduced half-factorial monoids of type $(1,k_i)$ with $k_i\in\lbrace 1,2\rbrace$ for all $i\in[1,r]$ such that $D=\mathcal F(P)\times D_1\times\ldots\times D_r$. Let $H\subset D$ be a saturated submonoid, $G=\mathsf q(D/H)$ its class group with $|G|=2$, say $G=\lbrace\mathbf 0,g\rbrace$, suppose each class in $G$ contains some $p\in P$, and define a homomorphism $\iota:D_1\times\ldots\times D_r\rightarrow G$ by $\iota(t)=[t]_{D/H}$.\\
Then we find the following for the atoms of the $(D_1\times\ldots\times D_r)$-block monoid over $G$ defined by $\iota$, i.e., $\mathcal B(G,D_1\times\ldots\times D_r,\iota)$:
\begin{align*}
&\mathcal A(\mathcal B(G,D_1\times\ldots\times D_r,\iota)) \\
&= \lbrace\mathbf 0,g^2\rbrace \\
&\cup \lbrace p_i\eps\mid i\in[1,r],\,\eps\in\mathcal U_1(D_i),\,\iota(p_i\eps)=\mathbf 0\rbrace \\
&\cup \lbrace p_i\eps g\mid i\in[1,r],\,\eps\in\mathcal U_1(D_i),\,\iota(p_i\eps)=g\rbrace \\
&\cup \lbrace p_i^2\eps\mid i\in[1,r],\,\eps\in(\mal{\widehat{D_i}})_{\mathbf 0}\setminus(\mathcal U_1(D_i)_{\iota(p_i)})^2\rbrace \\
&\cup \lbrace p_ip_j\eps_i\eps_j\mid i,\,j\in[1,r],\,i\neq j,\,\eps_i\in\mathcal U_1(D_i),\,\eps_j\in\mathcal U_1(D_j),\,\iota(p_i\eps_i)=\iota(p_j\eps_j)=g\rbrace\,.
\end{align*}
\end{lemma}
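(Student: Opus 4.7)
The plan is to enumerate the atoms of $\mathcal{B} := \mathcal{B}(G, D_1 \times \ldots \times D_r, \iota)$ by classifying them via the group-part $S \in \mathcal{F}(G)$ of a candidate $St$. Since $G = \{\mathbf{0}, g\}$, every such $S$ has the form $\mathbf{0}^a g^b$. The key structural observation is that $\mathcal{F}(G)$ is free, so any factorization $St = (S_1 t_1)(S_2 t_2)$ in $\mathcal{B}$ corresponds to a splitting $S_1 S_2 = S$ of sequences together with $t_1 t_2 = t$ in $T := D_1 \times \ldots \times D_r$. Since each $D_k$ is reduced, $T$ and hence $\mathcal{B}$ are reduced, so non-triviality of a factorization just means both factors differ from $1$.

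First I would handle the cases where $S \neq \emptyset$. If $a \geq 1$, then $St = \mathbf{0} \cdot (\mathbf{0}^{a-1} g^b t)$ splits off the non-unit $\mathbf{0} \in \mathcal{B}$ and is trivial only when $St = \mathbf{0}$, yielding the first atom. Similarly, if $b \geq 2$, then (using $2g = \mathbf{0}$ in $G$) $St = g^2 \cdot (g^{b-2} t)$ is a nontrivial decomposition unless $St = g^2$, yielding the second element of the first set. For $S = g$ (so $\iota(t) = g$), any nontrivial factorization $gt = (gt_1) \cdot t_2$ in $\mathcal{B}$ amounts to $t = t_1 t_2$ in $T$ with $\iota(t_1) = g$, $\iota(t_2) = \mathbf{0}$, both non-units; conversely, since $\iota(t) = g$ forces exactly one of $\iota(t_1), \iota(t_2)$ to be $\mathbf{0}$, every nontrivial factorization of $t$ in $T$ has this form. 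Hence $gt$ is an atom iff $t$ is an atom of $T$, and by Lemma 3.9, $\mathcal{A}(D_k) = \{p_k \eps : \eps \in \mathcal{U}_1(D_k)\}$, which yields the third set.

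The substantive case is $S = \emptyset$, i.e., $t \in T$ with $\iota(t) = \mathbf{0}$: here $t$ is an atom of $\mathcal{B}$ iff there is no splitting $t = t_1 t_2$ in $T$ with $\iota(t_1) = \iota(t_2) = \mathbf{0}$ and both non-units. I would decompose $t = \prod_k t_k$ uniquely with $t_k \in D_k$ and split by the support size $|\{k : t_k \neq 1\}|$. For single-support $t = p_k^n \eps \in D_k$: the case $n = 1$ gives the second set (an atom of $T$ with $\iota = \mathbf{0}$); $n = 2$ gives the fourth set after observing that factorizations in $T$ are precisely $(p_k \eta_1)(p_k \eta_2)$ with $\eta_j \in \mathcal{U}_1(D_k)$ and $\eta_1 \eta_2 = \eps$, so atomicity in $\mathcal{B}$ amounts to $\eps \notin (\mathcal{U}_1(D_k)_{\iota(p_k)})^2$; and $n \geq 3$ is never atomic, since half-factoriality together with the constraint $\iota(p_k^n \eps) = \mathbf{0}$ always produces a proper $\iota = \mathbf{0}$ factorization in $D_k$ (e.g.\ $p_k \cdot p_k^{n-1}\eps$ when $\iota(p_k) = \mathbf{0}$, and $p_k^2 \cdot p_k^{n-2}\eps$ in the remaining case where the parity constraint combined with $\mathcal{U}_l(D_k) = (\mathcal{U}_1(D_k))^l$ forces $n$ to be even). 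For multi-support $t$: if some $t_k$ has $\iota(t_k) = \mathbf{0}$ we split it off; otherwise every supported $t_k$ has $\iota(t_k) = g$ and the support size is even; if this size is $\geq 4$ it splits into two even-sized halves, each of $\iota$-value $\mathbf{0}$. This leaves support size exactly $2$, which produces the fifth set.

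The most delicate step---what I expect to be the main obstacle---is the support-$2$ analysis: showing that if $t = t_i t_j$ with $i \neq j$, $\iota(t_i) = \iota(t_j) = g$, and $\mathsf{v}_{p_i}(t_i) \geq 2$, then $t_i$ (and hence $t$) admits a nontrivial $\iota = \mathbf{0}$ divisor in $D_i$. By Lemma 3.9, $t_i$ factors as $\prod_{l=1}^{n_i}(p_i \zeta_l)$ with $\zeta_l \in \mathcal{U}_1(D_i)$, and $\iota(t_i) = g$ means the number of indices $l$ with $\iota(p_i \zeta_l) = g$ is odd. If at least one of these atoms has $\iota$-value $\mathbf{0}$, that atom is itself the desired nontrivial divisor; otherwise all $n_i$ atoms of $t_i$ have $\iota = g$, so $n_i$ is odd and hence $n_i \geq 3$, and the product of any two atoms then yields a nontrivial $\iota = \mathbf{0}$ divisor. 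This parity argument, combined with half-factoriality, is precisely what reduces the support-$2$ atoms to the case $\mathsf{v}_{p_i}(t_i) = \mathsf{v}_{p_j}(t_j) = 1$, namely the fifth set.
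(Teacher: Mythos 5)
Your enumeration is correct and arrives at exactly the right list, but it takes a genuinely different route from the paper's proof. The paper argues top-down: since $\mathcal B\subset\mathcal F(G)\times D_1\times\ldots\times D_r$ is saturated with class group $G$ and $\mathsf D(G)=2$, every atom of $\mathcal B$ is a product of at most \emph{two} atoms of the ambient monoid, and by \autoref{3.9} those ambient atoms are just $\mathbf 0$, $g$, and the $p_i\eps$ with $\eps\in\mathcal U_1(D_i)$; so the paper only has to run through the finitely many one- and two-atom products, sort out which lie in $\mathcal B$, and (for the $p_i^2\eps$ shape) exclude those admitting an alternative splitting into two $\iota$-trivial atoms, which is where the condition $\eps\notin(\mathcal U_1(D_i)_{\iota(p_i)})^2$ comes from. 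This Davenport bound kills at a stroke every candidate with $\sum_i\mathsf v_{p_i}\geq 3$, with support of size $\geq 3$, or with more than two $g$'s in the group part --- precisely the cases you dispose of by hand with your support-size and parity arguments. Your bottom-up version is more elementary and self-contained (it never invokes the fact that atoms of a saturated submonoid factor into at most $\mathsf D(G)$ ambient atoms), at the price of the explicit reducibility analysis. One small imprecision to repair: in the single-support case $n\geq 3$, the exhibited splittings $p_k\cdot p_k^{n-1}\eps$ and $p_k^2\cdot p_k^{n-2}\eps$ are not in general factorizations \emph{in} $D_k$ ($p_k$ itself need not lie in $D_k$, and the units must be distributed among the factors), and the parity of $n$ is not what is forced by $\iota(p_k^n\eps)=\mathbf 0$. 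The correct argument is exactly the one you spell out later for the support-two case: write $t=(p_k\zeta_1)\mdots(p_k\zeta_n)$ with $\zeta_l\in\mathcal U_1(D_k)$ via \autoref{3.9}, observe that the number of factors with $\iota$-value $g$ is even, and split off either one $\iota$-trivial atom or a pair of $\iota$-value-$g$ atoms; since $n\geq 3$ the complementary factor is a non-unit, so $t$ is reducible. With that substitution the proof is complete.
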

\begin{proof}
For short, we write $\mathcal B=\mathcal B(G,D_1\times\ldots\times D_r,\iota)$. Since $|G|=2$, we have $\mathsf D(G)=2$, and thus every atom of $\mathcal B$ is a product of at most two atoms of $\mathcal F(G)\times D_1\times\ldots\times D_r$. First, we write down all atoms of $\mathcal F(G)\times D_1\times\ldots\times D_r$, namely,
\[
\mathcal A(\mathcal F(G)\times D_1\times\ldots\times D_r)=\lbrace\mathbf 0,g\rbrace\cup\bigcup_{\mathclap{i\in[1,r]}}\lbrace p_i\eps\mid\eps\in\mathcal U_1(D_i)\rbrace\,,
\]
by \autoref{3.9}.\ref{3.9.1}.
Now, we find
\begin{align*}
\mathcal A(\mathcal F(G)\times D_1\times\ldots\times D_r)\cap\mathcal B
&= \lbrace\mathbf 0\rbrace
\cup \lbrace p_i\eps\mid i\in[1,r],\,\eps\in\mathcal U_1(D_i),\,\iota(p_i\eps)=\mathbf 0\rbrace\,\mbox{, and} \\
\mathcal A(\mathcal F(G)\times D_1\times\ldots\times D_r)\setminus\mathcal B
&= \lbrace\mathbf g\rbrace
\cup \lbrace p_i\eps\mid i\in[1,r],\,\eps\in\mathcal U_1(D_i),\,\iota(p_i\eps)=g\rbrace\,.
\end{align*}
By \autoref{3.13}, $D$ and $H$ are reduced, and therefore $\eps_i\eps_j\notin\mathcal B$ for all $i,\,j\in[1,r]$, $i\neq j$, $\eps_i\in\mathcal U_1(D_i)$, and $\eps_j\in\mathcal U_1(D_i)$. Thus the following products of two atoms of $\mathcal F(G)\times D_1\times\ldots\times D_r$ are atoms of $\mathcal B$:
\begin{align*}
\mathcal A(\mathcal B)\supset & \lbrace g^2\rbrace \\
\cup & \lbrace p_i\eps g\mid i\in[1,r],\,\eps\in\mathcal U_1(D_i),\,\iota(p_i\eps)=g\rbrace \\
\cup & \lbrace p_i^2\eps\mid i\in[1,r],\,\eps\in(\mal{\widehat{D_i}})_{\mathbf 0}\setminus(\mathcal U_1(D_i)_{\iota(p_i)})^2\rbrace \\
\cup & \lbrace p_ip_j\eps_i\eps_j\mid i,\,j\in[1,r],\,i\neq j,\,\eps_i\in\mathcal U_1(D_i),\,\eps_j\in\mathcal U_1(D_j),\,\iota(p_i\eps_i)=\iota(p_j\eps_j)=g\rbrace\,.
\end{align*}
Since we have run through all possible combinations, the assertion follows.
\end{proof}

\begin{lemma}
\label{3.17}
Let $D=\mathcal F(P)\times D_1\times\ldots\times D_r$ be a monoid, where $P\subset D$ is a set of prime elements, $r\in\N$, and, for all $i\in [1,r]$, $D_i\subset [p_i]\times\wmal{D_i}$ is a reduced half-factorial but not factorial monoid of type $(1,1)$. Let $H\subset D$ be a saturated submonoid, $G=\mathsf q(D/H)$ its class group with $|G|=2$, and suppose each class in $G$ contains some $p\in P$. Let $\iota:D_1\times\ldots\times D_r\rightarrow G$ be defined by $\iota(t)=[t]_{D/H}$, denote by $\mathcal B(G,D_1\times\ldots\times D_r,\iota)$ the $(D_1\times\ldots\times D_r)$-block monoid over $G$ defined by $\iota$, and set $|\cdot|_{\mathcal B}=|\cdot|_{\mathcal B(G,D_1\times\ldots\times D_r,\iota)}$.
\begin{enumerate}
\item\label{3.17.1} If $(x,y)\in\sim_{\mathcal B(G,D_1\times\ldots\times D_r,\iota)}$ with $|y|_{\mathcal B}\geq |x|_{\mathcal B}$ and $|y|_{\mathcal B}\geq 5$, then there is a monotone $\mathcal R$-chain concatenating $x$ and $y$; in particular, $x\approx y$, and if $|x|_{\mathcal B}=|y|_{\mathcal B}$, then $x\eq y$.
\item\label{3.17.2} Additionally,
\[
\cmon(\mathcal B(G,D_1\times\ldots\times D_r,\iota)\leq\sup\lbrace |y|_{\mathcal B}\mid(x,y)\in\mathcal A(\sim_{\mathcal B(G,D_1\times\ldots\times D_r,\iota)}),\,|x|_{\mathcal B}\leq |y|_{\mathcal B}\leq 4\rbrace.
\]
\end{enumerate}
\end{lemma}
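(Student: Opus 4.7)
Write $\mathcal B=\mathcal B(G,D_1\times\dots\times D_r,\iota)$, set $G=\{\mathbf 0,g\}$, and denote by $\sim_{\mathcal B}$ the monoid of relations of $\mathcal B$. By hypothesis $\mathsf D(G)=2$, and since each $D_i$ is half-factorial of type $(1,1)$ we have $\mathcal U_1(D_i)=\mal{\widehat{D_i}}$, so the atom list of $\mathcal B$ supplied by \autoref{3.16} reduces to the six explicit types listed there. I would prove part~1 by strong induction on $|y|_{\mathcal B}$. Given $(x,y)\in\sim_{\mathcal B}$ with $|x|_{\mathcal B}\le|y|_{\mathcal B}$ and $|y|_{\mathcal B}\ge 5$, the goal is to exhibit a factorization $z\in\mathsf Z_{\mathcal B}(\pi_{\mathcal B}(y))$ with $|x|_{\mathcal B}\le|z|_{\mathcal B}\le|y|_{\mathcal B}$, $z\ne y$, and $\gcd(z,y)\ne 1$ (and with $|z|_{\mathcal B}=|y|_{\mathcal B}$ in the equal-length case); the inductive hypothesis applied to $(x,z)$ then furnishes a monotone $\mathcal R$-chain from $x$ to $z$, to which the single step $z\to y$ is appended.

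The construction of $z$ is purely combinatorial. Reading off the atoms dividing $y$ according to the six classes from \autoref{3.16}, I would exploit length-non-increasing local swaps: the identity $(p_i\eps g)(p_j\eta g)=(g^2)(p_ip_j\eps\eta)$ for $i\ne j$ when $p_ip_j\eps\eta$ is atomic, the analogous $(p_i\eps g)(p_i\eta g)=(g^2)(p_i^2\eps\eta)$ for $i=j$ (and otherwise $p_i^2\eps\eta$ splits further, strictly shortening the factorization), and symmetry swaps that exchange the unit parts of atoms sharing the same $p_i$. For $|y|_{\mathcal B}\ge 5$, a pigeonhole argument over the six atom types guarantees that $y$ admits at least one such swap, producing the desired $z$ sharing a common atom with $y$. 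The equal-length variant requires only swaps that preserve length, and these are furnished by the first displayed identity whenever two $g$-carrying atoms are available in $y$.

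For part~2, let $N$ denote the right-hand side of the asserted bound and take $a\in\mathcal B$ with $z,z'\in\mathsf Z_{\mathcal B}(a)$ and $|z|_{\mathcal B}\le|z'|_{\mathcal B}$. If $|z'|_{\mathcal B}\le 4$, then $(z,z')\in\sim_{\mathcal B}$ decomposes as a product of atoms $(x_j,y_j)\in\mathcal A(\sim_{\mathcal B})$ with $|x_j|_{\mathcal B}\le|y_j|_{\mathcal B}\le 4$; the standard substitution chain that successively replaces $x_j$ by $y_j$, ordered so that lengths are non-decreasing (which is possible because $|z|_{\mathcal B}\le|z'|_{\mathcal B}$), is monotone with distances bounded by $\max_j|y_j|_{\mathcal B}\le N$. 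If $|z'|_{\mathcal B}\ge 5$, invoke part~1 to obtain a monotone $\mathcal R$-chain $z=z_0,z_1,\dots,z_n=z'$; at each step, cancelling a common atom $w_i\mid\gcd(z_{i-1},z_i)$ yields a strictly shorter relation in $\sim_{\mathcal B}$, and iterating this reduction bottoms out at relations whose atomic components have length $\le 4$, so each step $z_{i-1}\to z_i$ can be refined into substeps of distance $\le N$.

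The main technical obstacle lies in the combinatorial case analysis of part~1: one must certify that the threshold $|y|_{\mathcal B}\ge 5$ is tight enough for at least one length-non-increasing swap to always exist, handling in particular factorizations of $y$ dominated by the mixed atoms $p_ip_j\eps_i\eps_j$ or by the $g$-carrying atoms $p_i\eps g$, and ensuring monotonicity is preserved in the equal-length regime. Once part~1 is established, part~2 is a reduction argument, but one still needs to track step sizes carefully so that the refinement produces the bound $N$ rather than merely $|z'|_{\mathcal B}$.
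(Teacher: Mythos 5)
Your outline identifies the right objects (the atom list from \autoref{3.16}, length-preserving swaps such as $(p_i\eps g)(p_j\eta g)=(g^2)(p_ip_j\eps\eta)$), but it leaves the entire substance of the lemma unproved, and two of the structural steps you do commit to are flawed.

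First, the induction scheme for part~1 is not well-founded. You induct on $|y|_{\mathcal B}$ and propose to find $z$ with $|x|_{\mathcal B}\leq|z|_{\mathcal B}\leq|y|_{\mathcal B}$ and $\gcd(z,y)\neq 1$, then apply the inductive hypothesis to $(x,z)$. But if $|z|_{\mathcal B}=|y|_{\mathcal B}$ (which is forced in the equal-length regime, where every swap must preserve length) the hypothesis does not apply to $(x,z)$; and if $|z|_{\mathcal B}$ drops to $4$ while $|x|_{\mathcal B}\leq 4$, neither the inductive hypothesis nor the statement of the lemma says anything about $(x,z)$, so the recursion can bottom out before reaching $x$. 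You would need a second, decreasing quantity (e.g., some measure of the discrepancy between $x$ and $y$), and your swaps are chosen by looking at $y$ alone, so nothing guarantees they decrease any such measure. The paper avoids this entirely: it works from the joint structure of $x$ and $y$ (splitting each component $x_i=x_i'x_i''$ by the $\iota$-value of its atoms) and in each configuration writes down an explicit monotone $\mathcal R$-chain, usually $x,\,z,\,y$ with $z$ obtained by modifying \emph{two atoms of $x$} so that $z$ contains an atom dividing $y$. The assertion you dismiss as ``a pigeonhole argument over the six atom types'' is precisely the content of the paper's Cases 1--3 with all their subcases: it requires, among other things, counting arguments of the form $N_m\geq 3$ comparing $\sum_\nu|y_\nu''|$ against $\sum_\nu|x_\nu''|$, contradiction arguments ruling out pure atoms, and a separate treatment of the boundary case $|x|_{\mathcal B}=3$. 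None of this is routine, and the threshold $5$ is exactly calibrated to these counts; a generic pigeonhole over atom types does not see it.

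Second, your argument for part~2 contains a false step. A decomposition of $(z,z')\in\sim_{\mathcal B}$ into atoms $(x_j,y_j)$ of $\mathcal A(\sim_{\mathcal B})$ need not consist of atoms with $|x_j|_{\mathcal B}\leq|y_j|_{\mathcal B}$, even when $|z|_{\mathcal B}\leq|z'|_{\mathcal B}$: some factors may strictly decrease length and others increase it, and no ordering of the substitutions makes every step length-non-decreasing. This is exactly the gap between $\sim_{\mathcal B}$ and $\sim_{\mathcal B,\mathrm{mon}}$ (the latter is not saturated in the former), and it is why the paper invokes \cite[Lemma 3.4]{phil11a} and then argues separately that any atom of $\mathcal A(\sim_{\mathcal B})\setminus\mathcal A(\sim_{\mathcal B,\mathrm{mon}})$ has $|xy|_{\mathcal B}\geq 5$, so that part~1 disposes of it. Without that lemma (or a proof of it), your refinement of long $\mathcal R$-chain steps into substeps of distance $\leq N$ is also unsubstantiated. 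As it stands, the proposal is a plan for a proof, not a proof.
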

\begin{proof}
Let $|G|=2$, say $G=\lbrace\mathbf 0,g\rbrace$.
By \autoref{3.5}.\ref{3.5.4}, we set $\mathcal B(G,D_1\times\ldots\times D_r,\iota)=[\mathbf 0]\times\mathcal B$ with $\mathcal B=\lbrace S\in\mathcal B(G,D_1\times\ldots\times D_r,\iota)\mid\mathbf 0\nmid S\rbrace$.
Before we start the actual proof, we establish some machinery to deal with factorizations in $\mathcal B$ and their lengths more systematically.\\
We set $D_0=\mathcal F(\lbrace g\rbrace)$, whence $\mathcal A(D_0)=\lbrace g\rbrace$ and $\mathsf Z(D_0)=D_0$. We define $\iota:D_0\rightarrow G$ by $\iota(g^k)=kg$ for all $k\in\Z$. For $i\in [0,r]$, let $\pi_i:\mathsf Z(D_i)\rightarrow D_i$ be the factorization homomorphism. We set $D'=D_0\times\ldots\times D_r$ and obtain $\mathcal A(D')=\mathcal A(D_0)\cup\ldots\cup\mathcal A(D_r)$. If $a=a_0\mdots a_r\in D'$, where $a_i\in D_i$ for all $i\in [0,r]$, then we set $\iota(a)=\iota(a_0)+\iota(a_1\mdots a_r)=\iota(a_0)+\ldots+\iota(a_r)$. Then $\iota:D'\rightarrow G$ is a homomorphism and $\mathcal B=\iota^{-1}(\mathbf 0)\subset D'$ is a saturated submonoid, whose atoms are given by the following assertion $\mathbf{A1}$.
\begin{itemize}
 \item[$\mathbf{A1}$] An element $x\in D_0\times\mdots\times D_r$ is an atom of $\mathcal B$ if and only if it is of one of the following forms:
 \begin{itemize}
  \item[$\bullet$] $x=a\in\mathcal A(D_i)$ for some $i\in [1,r]$ and $\iota(a)=\mathbf 0$.
  \item[$\bullet$] $x=a_1a_2$, where $a_1\in\mathcal A(D_i)$, $a_2\in\mathcal A(D_j)$, for some $i,\,j\in [0,r]$, $i\neq j$, and $\iota(a_1)=\iota(a_2)=g$.
  \item[$\bullet$] $x=a_1a_2$, where $a_1,\,a_2\in\mathcal A(D_i)$ for some $i\in [0,r]$ such that $\iota(v)=g$ for all $v\in\mathcal A(D_i)$.
 \end{itemize}
 We will call the atoms of the third form \emph{pure in $i$}.
\end{itemize}
\begin{proof}[Proof of $\mathbf{A1}$]
By the listing of all atoms of $\mathcal B(G,D_1\times\ldots\times D_r,\iota)$ in \autoref{3.16} and the fact that $\mathcal A(\mathcal B)=\mathcal A(\mathcal B(G,D_1\times\ldots\times D_r,\iota))\setminus\lbrace\mathbf 0\rbrace$, we must only show the last statement in the case $i\in [1,r]$. Suppose there are $a_1,\,a_2\in\mathcal A(D_i)$ such that $a=a_1a_2\in\mathcal A(\mathcal B)$. Then, obviously, $\iota(a_1)=\iota(a_2)=g$. Now we assume there is some $v\in\mathcal A(D_i)$ with $\iota(v)=\mathbf 0$. By \autoref{3.9}.\ref{3.9.2}, there is $v'\in\mathcal A(D_i)$ such that $a_1a_2=vv'$, and then $\iota(v')=\mathbf 0$, a contradiction.
\end{proof}
Let $F=\mathsf Z(D')=\mathsf Z(D_0)\times\ldots\times\mathsf Z(D_r)=\mathcal F(\mathcal A(D'))$ be the factorization monoid of $D'$. Then $\pi=\pi_0\times\ldots\times\pi_r:F\rightarrow D'$ is the factorization homomorphism of $D'$. We denote by $|\cdot|=|\cdot|_F$ the length function in the free monoid $F$, and for $x,\,y\in F$, we write $x\mid y$ instead of $x\mid_F y$.
For $a\in\mathcal A(\mathcal B)$, let $\theta_0(a)\in\pi^{-1}(a)\subset\mathsf Z(D')$ be a factorization of $a$ in $D'$. If $a\in\mathcal A(D')$, then $\theta_0(a)=a$; otherwise $\theta_0(a)=a_1a_2\in F$ for some $a_1,\,a_2\in\mathcal A(D')$ such that $a=a_1a_2$ in $D'$. By $\mathbf{A1}$, $\#\pi^{-1}(a)=1$ unless $a$ is pure in $i$ for some $i\in [1,r]$. Let $\theta:\mathsf Z(\mathcal B)\rightarrow F$ be the unique monoid homomorphism satisfying $\theta|\mathcal A(\mathcal B)=\theta_0$. Then $\theta$ induces the following commutative diagram
\[
\xymatrix{
\mathsf Z(\mathcal B)\ar[d]^{\pi_{\mathcal B}}\ar[r]^{\theta\quad\;} & F=\mathsf Z(D')\ar[d]^{\pi_{D'}} \\
\mathcal B\ar@{^{(}->}[r] & D',
}
\]
where $\pi_{\mathcal B}$ denotes the factorization homomorphism of $\mathcal B$ and the bottom arrow denotes the inclusion. For $x\in\mathsf Z(\mathcal B)$, we set $|x|=|\theta(x)|$.
For $x\in\mathsf Z(\mathcal B)$, we define its components $x_i\in\mathsf Z(D_i)$ for $i\in [0,r]$ by $\theta(x)=x_0\mdots x_r$. Then $\pi\circ\theta(x)\in\mathcal B$ implies $\iota\circ\pi_0(x_0)+\ldots+\iota\circ\pi_r(x_r)=\mathbf 0$. For $i\in [0,r]$, we set $x_i=u_{i,1}\mdots u_{i,k_i}v_{i,1}\mdots v_{i,l_i}$, where $u_{i,j},\,v_{i,j}\in\mathcal A(D_i)$, $\iota(u_{i,j})=\mathbf 0$, and $\iota(v_{i,j})=g$. We define $x_i',\,x_i''\in\mathsf Z(D_i)$ by $x_i'=u_{i,1}\mdots u_{i,k_i}$ and $x_i''=v_{i,1}\mdots v_{i,l_i}$, whence $x_i=x_i'x_i''$. In particular, $|x_0'|=0$, $x_0=x_0''$, and $\iota\circ\pi_i(x_i)=l_ig=|x_i''|g$. Therefore we obtain $|x_0''|+\ldots+|x_r''|\equiv 0\mod 2$. If $i\in [1,r]$ and $a\in D_i$ is such that $a\mid x_i'$, then $a\mid_{\mathcal B}x$. In $\mathsf Z(\mathcal B)$, there is a factorization $x=u_1\mdots u_mv_1\mdots v_n$, where $u_j,\,v_j\in\mathcal A(\mathcal B)$, $|u_j|=1$ for all $j\in [1,m]$, $|v_j|=2$ for all $j\in [1,n]$, and we obtain
\[
 m=\sum_{i=1}^r|x_i'|,\quad
 n=\frac{1}{2}\sum_{i=0}^r|x_i''|,
 \quad\mbox{and}\quad
 |x|_{\mathcal B}=m+n=\frac{1}{2}\sum_{i=0}^r(|x_i|+|x_i'|)\leq\sum_{i=0}^r|x_i|.
\]
Assume now that $x=x_0\mdots x_r$, $y=y_0\mdots y_r\in\mathsf Z(\mathcal B)$ are as above, and suppose that $(x,y)\in\sim_{\mathcal B}$. Then $x_0=y_0$, $|x_i|=|y_i|$ (since each $D_i$ is half-factorial), $\pi_i(x_i)=\pi_i(y_i)\in D_i$, and thus $\iota\circ\pi_i(x_i)=\iota\circ\pi_i(y_i)\in G$, and therefore $|x_i''|\equiv |y_i''|\mod 2$ and $|x_i'|\equiv |y_i'|\mod 2$ for all $i\in [1,r]$. Consequently, it follows that the following are all equivalent:
\begin{itemize}
 \item $|x|_{\mathcal B}\leq |y|_{\mathcal B}$
 \item $\sum_{i=1}^r|x_i'|\leq\sum_{i=1}^r|y_i'|$
 \item $\sum_{i=1}^r|x_i''|\geq\sum_{i=1}^r|y_i''|$
\end{itemize}
Additionally, we find
\[
2|x|_{\mathcal B}=\sum_{i=0}^r(|x_i|+|x_i'|)\geq\sum_{i=0}^r(|y_i|)\geq |y|_{\mathcal B},
\]
and thus $|y|_{\mathcal B}\geq 5$ implies $|x|_{\mathcal B}\geq 3$.\\
Before we start with the actual proof of part 1 of \autoref{3.17}, we prove the following reduction step.
\begin{itemize}
 \item[$\mathbf{A2}$] In the proof of part 1 of \autoref{3.17}, we may assume that $|x_i|=|y_i|\geq 2$ for all $i\in [1,r]$.
\end{itemize}
\begin{proof}[Proof of $\mathbf{A2}$]
If $i\in[1,r]$, then $|x_i|=0$ if and only if $|y_i|=0$, and in this case we may neglect this component. If $|x_i|=0$ for all $i\in[1,r]$, then there is nothing to do. If $i\in[1,r]$, then $|x_i|=1$ if and only if $|y_i|=1$, and then $x_i=y_i\in\mathcal A(D_i)$. Suppose that $i\in[1,r]$ and $|x_i|=1$. If $\iota(x_i)=\mathbf 0$, then $x_i\in\mathcal A(\mathcal B)$ and $x_i$ is a greatest common divisor of $x$ and $y$ in $\mathsf Z(\mathcal B)$; hence $(x,\,y)$ is a monotone $\mathcal R$-chain concatenating $x$ and $y$. If $\iota(x_i)=g$, then we set $\tilde x=gx_i^{-1}x$, $\tilde y=gy_i^{-1}y$, and then $(\tilde x,\tilde y)\in\,\sim_{\mathcal B}$, $|\tilde x_i|=|\tilde y_i|=0$, and whenever there is a monotone $\mathcal R$-chain concatenating $\tilde x$ and $\tilde y$, then there is a monotone $\mathcal R$-chain concatenating $x$ and $y$.
\end{proof}
Now we are ready to do the actual proof of the lemma. Suppose that $(x,y)\in\sim_{\mathcal B}$ with $|y|_{\mathcal B}\geq 5$, $|y|_{\mathcal B}\geq |x|_{\mathcal B}$, $x=x_0\mdots x_r$, $y=y_0\mdots y_r$, $x_i=x_i'x_i''$, and $y_i=y_i'y_i''$ as above, and $|x_i|=|y_i|\geq 2$ for all $i\in [0,r]$. We shall use $\mathbf A1$ and \autoref{3.9}.\ref{3.9.2} again and again without mentioning this explicitly. Of course, we may assume that there is no $a\in\mathcal A(\mathcal B)$ such that $a\mid_{\mathcal B}x$ and $a\mid_{\mathcal B}y$, since then  there is, trivially, a monotone $\mathcal R$-chain concatenating $x$ and $y$. For now, assume $|x|_{\mathcal B}\geq 4$; the remaining case, where $|x|_{\mathcal B}=3$, will be studied at the end of the proof after Case 3.
\smallskip

\noindent\textbf{Case 1.} There is some $i\in [1,r]$ such that $|x_i'|\geq 1$ and $|y_i'|\geq 1$.\\
\indent\textbf{Case 1.1.} There is some $i\in [1,r]$ such that $|x_i|'\geq 2$ and $|y_i'|\geq 1$.\\
Let $a_1,\,a_2,\,b\in\mathcal A(D_i)$ be such that $a_1a_2\mid x_i'$ and $b\mid y_i'$. Then there is some $b'\in\mathcal A(D_i)$ such that $a_1a_2=bb'$. Thus $\iota(b')=\mathbf 0$, and if $x^*\in\mathsf Z(\mathcal B)$ is such that $x=a_1a_2x^*$, then $x,\,bb'x^*,\,y$ is a monotone $\mathcal R$-chain concatenating $x$ and $y$.\\
\indent\textbf{Case 1.2.} There is some $i\in [1,r]$ such that $|x_i'|=1$ and $|y_i'|\geq 1$.\\
Then $x_i'\in\mathcal A(\mathcal B)$. Let $a,\,b\in\mathcal A(D_i)$ be such that $a\mid x_i''$ and $b\mid y_i'$. Let $u\in\mathcal A(F)$ be such that $au\in\mathcal A(\mathcal B)$ and $au\mid_{\mathcal B} x$. Since $x_i'\in\mathcal A(D_i)$, we obtain $u\notin\mathcal A(D_i)$. Let $b'\in\mathcal A(D_i)$ be such that $x_i'a=bb'$, whence $\iota(b')=g$ and $b'u\in\mathcal A(\mathcal B)$. If $x=x_i'(au)x^*$, where $x^*\in\mathsf Z(\mathcal B)$, then $|x^*|_{\mathcal B}\geq 1$, and $x,\,b(b'u)x^*,\,y$ is a monotone $\mathcal R$-chain concatenating $x$ and $y$.
\smallskip

\noindent\textbf{Reduction 1.} By Case 1, we may now assume that, for all $i\in [1,r]$, either $|x_i'|=0$ or $|y_i'|=0$. In particular, if $|x_i'|\geq 1$, then $|y_i'|=0$, and therefore $|x_i'|\geq 2$, since $|x_i'|\equiv |y_i'|\mod 2$. Similarly, if $|y_i'|\geq 1$, then $|y_i'|\geq 2$.
\smallskip

\noindent\textbf{Case 2.} There is some $i\in [1,r]$ such that $|y_i'|\geq 1$.\\
In this case, $|x_i'|=0$ by Reduction 1, hence $|y_i'|\geq 2$ and $|x_i''|\geq 2$. Let $b\in\mathcal A(D_i)$ be such that $b\mid y_i'$. Now we must distinguish a few more cases based on $|x|_{\mathcal B}$ and $|y|_{\mathcal B}$.\\
\indent\textbf{Case 2.1.} $|x|_{\mathcal B}=|y|_{\mathcal B}$.\\
Note that in this case $|x|_{\mathcal B}=|y|_{\mathcal B}\geq 5$.
We assert that there is some $j\in [1,r]\setminus\lbrace i\rbrace$ such that $|y_j'|<|x_j'|$. Indeed, if $|y_j'|\geq |x_j'|$ for all $j\in [1,r]\setminus\lbrace i\rbrace$, then
\[
\sum_{\nu=1}^r|x_\nu'|\leq\sum_{\substack{\nu=1\\ \nu\neq i}}^r|y_\nu'|<\sum_{\nu=1}^r|y_\nu'|,
\]
and therefore $|x|_{\mathcal B}<|y|_{\mathcal B}$, a contradiction. By Reduction 1, we obtain $|y_j'|=0$. Hence $|y_j''|\geq 2$, and $|x_j'|\geq 2$. We write $x$ in the form
\[
 x=(a_1u_1)\mdots (a_ku_k)(a_{k+1}u_1^*)\mdots (a_{k+t}u_t^*)(e_1u_{k+1})\mdots (e_su_{k+s})\tilde x,
\]
where $k,\,s,\,t\in\N_0$, $x_i''=a_1\mdots a_{k+t}$, $x_j''=u_1\mdots u_{k+s}$, $u_1^*,\ldots,u_t^*,\,e_1,\ldots,e_s\in\mathcal A(F)\setminus(\mathcal A(D_i)\cup\mathcal A(D_j))$, $k+t\geq 2$, and
\[
\tilde x=(e_1\mdots e_s)^{-1}\prod_{\substack{\nu=1\\ \nu\neq i}}^r x_\nu'\prod_{\substack{\nu=1\\ \nu\neq i,j}}^r x_\nu''\in\mathsf Z(\mathcal B).
\]
Let $c_1,\,c_2,\,d_1\in\mathcal A(D_j)$ be such that $c_1c_2\mid x_j'$, $d_1\mid y_j''$, and choose $d_2\in\mathcal A(D_j)$ such that $c_1c_2=d_1d_2$, whence $\iota(d_2)=g$.\\
\indent\textbf{Case 2.1a.} $t\geq 2$.\\
Choose some $b'\in\mathcal A(D_i)$ such that $a_{k+1}a_{k+2}=bb'$. Then $\iota(b')=\mathbf 0$, $d_1u_1^*,\,d_2u_2^*\in\mathcal A(\mathcal B)$, and we set $x=(a_{k+1}u_1^*)(a_{k+2}u_2^*)c_1c_2x^*$, where $x^*\in\mathsf Z(\mathcal B)$ and $|x^*|_{\mathcal B}\geq 1$. Now $x,\,bb'(d_1u_1^*)(d_2u_2^*)x^*,\,y$ is a monotone $\mathcal R$-chain concatenating $x$ and $y$.\\
\indent\textbf{Case 2.1b.} $t=1$.\\
Note that $|x_i''|=k+1\geq 2$ implies $k\geq 1$. Assume first that there is some $v\in\mathcal A(\mathcal B)$ such that $|v|=2$ and $v\mid\tilde x$, say $v=v'v''$, where $v',\,v''\in\mathcal A(F)\setminus(\mathcal A(D_i)\cup\mathcal A(D_j))$ and $\iota(v')=\iota(v'')=g$. Then it follows that $a_1v',\,u_1v''\in\mathcal A(\mathcal B)$, and we set $x=(a_1u_1)(a_{k+1}v_1)(v'v'')x^*$, where $x^*\in\mathsf Z(\mathcal B)$ and $|x^*|_{\mathcal B}\geq 1$. We set $x'=(a_1v')(a_{k+1}v_1)(u_1v'')x^*$. Then we find $x'\in\mathsf Z(\mathcal B)$, $(x,x')\in\sim_{\mathcal B}$, and $x\eq x'$. Hence, $(x',y)\in\sim_{\mathcal B}$, $x'\eq y$ by Case 2.1a, and therefore $x\eq y$.\\
Now we set $u_1^*=u$, and we let $m\in [0,r]\setminus\lbrace i,j\rbrace$ be such that $u\in\mathcal A(D_m)$. We write $x$ in the form
\[
 x=(a_1u_1)\mdots(a_ku_k)(a_{k+1}u)(e_1u_{k+1})\mdots (e_su_{k+s})\prod_{\substack{\nu=1\\ \nu\neq i}}^r x_\nu',
\quad\mbox{where }
s+1=\sum_{\substack{\nu=0\\ \nu\neq i,j}}^r|x_\nu''|.
\]
We may assume that $|x_n' | =0$ for all $n\in [1,r]\setminus\lbrace m,j\rbrace$. Indeed, let $n\in [1,r]\setminus\lbrace m,j\rbrace$ be such that $|x_n'|\geq 1$. Then $|x_n'|\geq 2$, $|y_n'| =0$, and $|y_n'' |\geq 2$. Let $v_1,\,v_2,\,w_1\in \mathcal A(D_n)$ be such that $v_1v_2\mid x_n'$ and $w_1\mid y_n''$, and choose $b_1\in\mathcal A(D_i)$ and $w_2\in\mathcal A(D_n)$ such that $a_1a_{k+1} = bb_1$ and $v_1v_2 = w_1w_2$. Then $\iota(b_1)=\mathbf 0$, $\iota(w_2) = g$, $u_1w_1,\, uw_2\in\mathcal A(\mathcal B)$, and if $x =(a_1u_1)(a_{k+1}u)v_1v_2x^*$, where $x^* \in \mathsf Z(\mathcal B)$, then $|x^*|_{\mathcal B}\geq 1$, and $x,\,bb_1(u_1w_1)(uw_2)x^*,\,y$ is a monotone $\mathcal R$-chain concatenating $x$ and $y$.\\
Thus suppose that $|x_n' | =0$ for all $n \in [1,r] \setminus\lbrace m,j\rbrace$, and consequently
\[
x = (a_1u_1)\mdots(a_ku_k)(a_{k+1}u)(e_1u_{k+1})\mdots(e_su_{k+s})x_j'x_m'.
\]
We assert that there exist $v_1,\,v_2,\,v_3\in\mathcal A(D_m)$ and $w_1,\,w_2,\,w_3 \in\mathcal A(D_j)$ such that $v_1v_2v_3\mid y_m$, $w_1w_2w_3\mid y_j$, $\iota(v_\nu)=\iota(w_\nu)=g$, $v_\nu w_\nu\in\mathcal A(\mathcal B)$ and $v_\nu w_\nu\mid_{\mathcal B}y$ for all $\nu\in [1,3]$. Indeed, observe that
\[
|y_i''| = |y_i| -|y_i'| \le |y_i| -2 = |x_i| -2 = |x_i''|-2 = k-1,
\]
\[
|y_j''| = |y_j| = |x_j'|+|x_j''|\geq 2 + |x_j''| = k+s+2,
\]
and set $y_j'' = y_{j,1}\mdots y_{j,\mu}$, where $\mu=|y_j''|$, and, for all $\alpha\in [1,\mu]$, $y_{j,\alpha}\in\mathcal A(D_j)$ and $\iota(y_{j,\alpha}) = g$. For $\alpha \in [1,\mu]$, let $u_{j,\alpha}\in\mathcal A(F)$ be such that $y_{j,\alpha}u_{j,\alpha}\in\mathcal A(B)$ and $y_{j,\alpha}u_{j,\alpha}\mid_{\mathcal B} y$. Since $|x_j'|\geq 1$, it follows that $u_{j,\alpha}\notin\mathcal A(D_j)$ for all $\alpha\in [1,\mu]$. For $\nu\in [0,r]\setminus\lbrace j\rbrace$, we set $N_\nu=|\lbrace\alpha\in [1,\mu]\mid y_{\nu,\alpha}\in\mathcal A(D_\nu\rbrace|$, and we obtain
\[
\mu=\sum_{\substack{\nu =0\\ \nu\neq j}}^r N_\nu = N_m+N_i+ \sum_{\substack{\nu =0\\ \nu\neq i,j,m}}^r N_\nu\leq N_m + |y_i''|+ \sum_{\substack{\nu =0\\ \nu\neq i,j,m}}^r |y_\nu|.
\]
Since $|y_\nu| = |x_\nu| = |x_\nu''|$ for all $\nu\in [0,r]\setminus\lbrace i,j,m\rbrace$ and $|x_m''|\geq 1$, it follows that
\[
k+s+2\leq\mu\leq N_m + k - 1 + \sum_{\substack{\nu =0\\ \nu\neq i,j,m}}^r |x_\nu''|\leq N_m + k - 1 +  \sum_{\substack{\nu =0\\ \nu\neq i,j}}^r |x_\nu''| - |x_m''| = N_m + k +s -1,
\]
and therefore $N_m\geq 3$, which implies the existence of $v_1,\,v_2,\,v_3$ and $w_1,\,w_2,\,w_3$ as asserted. In particular, it follows that $|x_m| = |y_m|\geq |y_m''|\geq 3$ and $|x_j| = |y_j|\geq |y_j''|\geq 3$. Let $u_1'\in\mathcal A(D_j)$ be such that $u_1u_{k+1} = u_1'w_1$. Then  $\iota(u_1')=g$ and $a_1u_1'\in\mathcal A(\mathcal B)$.\\
\indent\textbf{Case 2.1b$\boldsymbol{'}$.} $s\geq 1$.\\
We assume first that $|x_m'|\geq 1$. Let $u'\in\mathcal A(D_m)$ be such that $u'\mid x_m'$. Then there exists some $v\in\mathcal A(D_m)$ such that $uu' = v_1v$. Hence $\iota(v)=\mathbf 0$, and $x = (a_1u_1)(a_{k+1}u)(e_1u_{k+1}) u'x^*$, where $x^* \in\mathsf Z(\mathcal B)$ and $|x^*|_{\mathcal B}\geq 1$. Since $a_1u_1'\in\mathcal A(\mathcal B)$ and $a_{k+1}e_1\in\mathcal A(\mathcal B)$, we conclude that $x,\,(a_1u_1')(a_{k+1}e_1)(v_1w_1)vx^*,\,y$ is a monotone $\mathcal R$-chain concatenating $x$ and $y$.\\
Assume now that $|x_m'| =0$. Then $|x_m''| = |x_m|\geq 3$, and (after renumbering if necessary) we may assume that $e_1\in\mathcal A(D_m)$. Let $v\in\mathcal A(D_m)$ be such that $ue_1=v_1v$. Then $\iota(v) = g$, $a_{k+1}v\in\mathcal A(\mathcal B)$ and $x = (a_1u_1)(a_{k+1}u)(e_1u_{k+1})x^*$, where $x^*\in\mathsf Z(\mathcal B)$ and $|x^*|_{\mathcal B}\geq 1$. Hence $x,\,(a_1u_1')(a_{k+1}v)(v_1w_1)x^*,\,y$ is a monotone $\mathcal R$-chain concatenating $x$ and $y$.\\
\indent\textbf{Case 2.1b$\boldsymbol{''}$.} $s=0$.\\
We assert that $k\geq 2$. Indeed, assuming to the contrary that $k = 1$, then $x_i = x_i'' = a_1a_2$, $x_j'' = u_1$, $u_m'' = u$, $3\leq |x_j| = 1 + |x_j'|$, $3\leq |x_m| = 1 + |x_m'|$, hence $|x_j'|\geq 2$, $|x_m'|\geq 2$, and therefore $|y_j'|=|y_m'| =0$. Hence
\[
\sum_{\nu =1}^r |y_\nu'| = |y_i'|\leq |y_i| = 2\quad\mbox{and}\quad\sum_{\nu =1}^r |x_\nu'| = |x_j'| + |x_m'|\geq 4,
\]
a contradiction to $|x|_{\mathcal B}=|y|_{\mathcal B}$.\\
As $k\geq 2$, it follows that $u_2\in\mathcal A(D_j)$, hence $u_2v_1\in\mathcal A(\mathcal B)$, and we choose $b_2\in\mathcal A(D_i)$ such that $a_1a_2 = bb_2$, whence $\iota(b_2)=\mathbf 0$. Since $3\leq |x_m| = 1 + |x_m'|$, we get $|x_m'|\geq 2$, and there exist $v_1',\,v_2'\in\mathcal A(D_m)$ such that $v_1'v_2'\mid x_m'$. Let $v\in\mathcal A(D_m)$ be such that $v_1'v_2' = v_1v$. Then $\iota(v) = g$ and $u_1v\in\mathcal A(\mathcal B)$.\\
Assume first that $k\geq 2$, and set $x =(a_1u_1)(a_2u_2)v_1'v_2'x^*$, where $x^*\in\mathcal A(\mathcal B)$ and $|x^*|_{\mathcal B}\geq 1$. Then $u_2v_1\in\mathcal A(\mathcal B)$, and therefore $x,\,bb_2(u_1v)(u_2v_1)x^*,\,y$ is a monotone $\mathcal R$-chain concatenating $x$ and $y$.\\
\indent\textbf{Case 2.1c.} $t=0$.\\
Observe that $|x_i''| = k\geq2$ and \ $x= (a_1u_1)\mdots (a_ku_k)(e_1u_{k+1})\mdots (e_su_{k+s})\,\tilde x$. We may assume that there is no $v \in\mathcal A(\mathcal B)$ such that $|v| =2$ and $v \mid \tilde x$. Indeed, if $v \in\mathcal A(B)$ is such that $|v| =2$ and $v \mid \tilde x$. Then $v = v'v''$, where $v',\,v'' \in\mathcal A(F) \setminus (\mathcal A(D_i) \cup\mathcal A(D_j))$, \ $\iota (v') = \iota(v'')=g$, and $a_2v',\, u_2v'' \in\mathcal A(\mathcal B)$. We set $x = (a_1u_1)(a_2u_2)(v'v'')x^*$, where $x^* \in \mathsf Z(\mathcal B)$, and $x' = (a_1u_1)(a_2v')(u_2v'')x^*$. Then it follows that $x' \in \mathsf Z(\mathcal B)$, \ $(x,x') \in \,\sim_{\mathcal B}$ and $x \eq x'$. Hence $(x',y) \in \,\sim_\mathcal B$, \ $x' \eq y$ by \,Case 2.1b, and therefore $x \eq y$.\\
Next we prove that there is some $n \in [1,r] \setminus \{j\}$ such that $|x_n'|\geq1$. Assume the contrary. Then $x = (a_1u_1)\mdots (a_ku_k)(e_1u_{k+1 })\mdots (e_su_{k+s})x_j'$, \ $x_i = x_i'' = a_1\mdots a_k$, \ $x_j'' = u_1\mdots u_{k+s}$, and
\[
e_1\mdots e_s = \prod_{\substack{\nu =0\\ \nu \neq i,j}}^r x_\nu\,.
\]
Moreover, we obtain \,$|y_i''| = |y_i| - |y_i'| \le |x_i| -2 = |x_i''| - 2 = k-2$\, and \,$|y_j''| = |y_j| = |x_j'|+|x_j''|\geq2+k+s$. We set $y_j'' = y_{j,1}\mdots y_{j,\mu}$, where $\mu = |y_j''|$, and, for all $\alpha \in [1,\mu]$, \ $y_{j,\alpha} \in\mathcal A(D_j)$ and $\iota(y_{j,\alpha}) = g$. For $\alpha \in [1,\mu]$, let \ $u_{j,\alpha} \in\mathcal A(F)$ be such that $y_{j,\alpha}u_{j,\alpha} \in\mathcal A(B)$ and $y_{j,\alpha}u_{j,\alpha} \mid_{\mathcal B} y$. Since $|x_j'|\geq1$, it follows that $u_{j,\alpha} \notin\mathcal A(D_j)$ for all $\alpha \in [1,\mu]$. For $\nu \in [0,r] \setminus \{j\}$, we set $N_\nu = |\{ \alpha \in [1,\mu] \mid y_{\nu,\alpha} \in\mathcal A(D_\nu \}|$, and we obtain
\[
2+k+s \ \le \ \mu \ = \ \sum_{\substack{\nu =0\\ \nu \neq j}}^r N_\nu \ \le \ \sum_{\substack{\nu =0\\ \nu \neq j}}^r |y_\nu''| \ \le \ |y_i''| + \sum_{\substack{\nu =0\\ \nu \neq i,j}}^r |y_\nu| \ = \ |y_i''| + \sum_{\substack{\nu =0\\ \nu \neq i,j}}^r |x_\nu| \ \le \ k-2+s\,,
\]
a contradiction.\\
Thus now let $n \in [1,r] \setminus \{j\}$ be such that $|x_n'|\geq1$. Then $|x_n'|\geq2$, \ $|y_n'|=0$ and $|y_n''|\geq2$. Let $v_1,\,v_2,\,w_1 \in\mathcal A(D_n)$ be such that $v_1v_2 \mid x_n'$, \ $w_1 \mid y_n''$, and choose some $x_2 \in\mathcal A(D_n)$ such that $v_1v_2 = w_1w_2$. Then $x = (a_1u_1)(a_2u_2)v_1v_2 x^*$, where $x^* \in\mathcal A(\mathcal B)$ and $|x^*|\geq1$. Let $b_2 \in\mathcal A(D_i)$ be such that $a_1a_2 = bb_2$, whence $\iota (b_2)=\boldsymbol 0$. Then $x,\, bb_2(u_1w_1)(u_2w_2)x^*,\,y$ is a monotone $\mathcal R$-chain concatenating $x$ and $y$.\\
\indent\textbf{Case 2.2.} $|y|_{\mathcal B}\geq|x|_{\mathcal B}+1$, and we are in the following special situation.
\begin{itemize}
 \item[$\mathbf{S1}$] There exist $a_1,\,a_2\in\mathcal A(D_i)$ and $u_1,\,u_2\in\mathcal A(F)\setminus\mathcal A(D_i)$ such that $a_1u_1,\,a_2u_2,\,u_1u_2\in\mathcal A(\mathcal B)$ and $(a_1u_1)(a_2u_2)\mid_{\mathcal B} x$.
\end{itemize}
We set $x = (a_1u_1)(a_2u_2)x^*$, where $x^*\in\mathcal A(\mathcal B)$ and $|x|_{\mathcal B}\geq 1$, and we let $b'\in\mathcal A(D_i)$ be such that $a_1a_2 = bb'$, whence $\iota(b')=0$. Then $x,\,bb'(u_1u_2)x^*,\,y$ is a monotone $\mathcal R$-chain concatenating $x$ and $y$.\\
\indent\textbf{Case 2.3.} $|y|_{\mathcal B}=|x|_{\mathcal B}+1$, and we are not in the special situation $\mathbf{S1}$.\\
We set $x_i'' = a_1\mdots a_k$, where $a_1,\ldots,a_k\in\mathcal A(D_i)$ and $k \ge 2$. For $\nu \in [1,k]$, let $u_\nu\in\mathcal A(F)$ be such that $a_\nu u_\nu\in\mathcal A(\mathcal B)$ and $(a_1u_1)\mdots (a_ku_k)\mid_{\mathcal B} x$. Since $|y_i'| \ge 1$ and we are not in the special situation $\mathbf{S1}$, there exists some $j \in [1,r] \setminus \{i\}$ such that $u_1\mdots u_k\mid x_j''$. Suppose that $x_j'' = u_1\mdots u_{k+s}$, where $s \in \N_0$, and let $c_1,\ldots, c_s\in\mathcal A(F) \setminus (\mathcal A(D_i)\cup\mathcal A(D_j))$ be such that \ $x = (a_1u_1)\mdots (a_ku_k)(c_1u_{k+1})\mdots (c_su_{k+s})\tilde x$ for some $\tilde x \in \mathsf Z(\mathcal B)$.\\
We may assume that there is no $v\in\mathcal A(\mathcal B)$ such that $|v| =2$ and $v\mid\tilde x$. Indeed, suppose that $v\in\mathcal A(B)$ is such that $|v| =2$ and $v\mid\tilde x$. Then $v = v'v''$, where $v',\,v''\in\mathcal A(F)\setminus (\mathcal A(D_i)\cup\mathcal A(D_j))$, $\iota (v') = \iota(v'')=g$, and $a_2v',\,u_2v''\in\mathcal A(\mathcal B)$. We set $x = (a_1u_1)(a_2u_2)(v'v'')x^*$, where $x^*\in\mathsf Z(\mathcal B)$, and $x' = (a_1u_1)(a_2v')(u_2v'')x^*$. Then it follows that $x'\in\mathsf Z(\mathcal B)$, $(x,x')\in\sim_{\mathcal B}$ and $x \eq x'$. Hence $(x',y) \in\sim_\mathcal B$, and, by Case 2.2, there is a monotone $\mathcal R$-chain concatenating $x'$ and $y$, and therefore there is a monotone $\mathcal R$-chain concatenating $x$ and $y$.\\
Hence $x$ is of the form
\[
x = (a_1u_1)\mdots (a_ku_k)(c_1u_{k+1})\mdots (c_su_{k+s})x_1'\mdots x_r',
\]
and we assert that there exists some $m\in [1,r]\setminus\lbrace j\rbrace$ such that $|x_m'|\geq 2$. Indeed, if we assume to the contrary that $|x_m'|=0$ for all $m\in [1,r]\setminus\lbrace j\rbrace$, then we obtain $|x|=2(k+s)+|x_j'|$, and since $|y|_{\mathcal B}=|x|_{\mathcal B}+1$, it follows that
\[
 \sum_{\nu=0}^r|y_\nu'|=\sum_{\nu=0}^r|x_\nu'|+2=|x_j'|+2.
\]
If $|y_j'|\geq 1$, then we find $|x_j'|=0$ and $|y_j'|\geq 2$, and therefore
\[
 4\leq |y_j'|+|y_i'|\leq\sum_{\nu=0}^r|y_\nu'|=2,
\]
a contradiction. Hence it follows that $|y_j'|=0$, and then $|y_j''|=|y_j|=|x_j|=k+s+|x_j'|$. Now we find
\[
 \sum_{\substack{\nu=0\\ \nu\neq j}}^r|y_\nu''|\leq |y|-|y_j''|-|y_i'|\leq |x|-(k+s+|x_j'|)-2=k+s-2\leq |y_j''|-2.
\]
We set $y_j''=y_{j_1}\mdots y_{j,\mu}$, where $\mu=|y_j''|$ and, for all $\alpha\in [1,\mu]$, $y_{j,\alpha}\in\mathcal A(D_j)$ and $\iota(y_{j,\alpha})=g$. For $\alpha\in [1,\mu]$, let $u_{j,\alpha}\in\mathcal A(F)$ be such that $y_{j,\alpha}u_{j,\alpha}\in\mathcal A(\mathcal B)$ and $y_{j,\alpha}u_{j,\alpha}\mid_{\mathcal B}y$. For $\nu\in [0,r]$, we set $N_\nu=\#\lbrace\alpha\in [1,\mu]\mid y_{\nu,\alpha}\in\mathcal A(D_\nu)\rbrace$, and we obtain
\[
 0\leq\sum_{\substack{\nu=0\\ \nu\neq j}}^r|y_\nu''|\leq |y_j''|-2=\sum_{\nu=0}^r N_\nu-2,
\]
and therefore $N_j\geq 2$. Hence, there exist $w_1,\,w_2\in\mathcal A(D_j)$ such that $\iota(w_1)=\iota(w_2)=g$ and $w_1w_2\in\mathcal A(\mathcal B)$. On the other hand, $u_1u_2\notin\mathcal A(\mathcal B)$, since we are not in the special situation $\mathbf{S1}$, and therefore $u_1u_2=u_1'u_2'$, where $u_1',\,u_2'\in\mathcal A(D_j)$ and $\iota(u_1')=\iota(u_2')=\mathbf 0$. Hence the existence of $w_1w_2\in\mathcal A(\mathcal B)$ contradicts $\mathbf{A1}$.\\
Let now $m\in [1,r]\setminus\lbrace j\rbrace$ be such that $|x_m'|\geq 2$ and let $b'\in\mathcal A(D_i)$ be such that $a_1a_2=bb'$. By Reduction 1, we obtain $|y_m'|=0$, hence $|y_m''|\geq 2$, there exist $v',\,v''\in\mathcal A(D_m)$ such that $v'v''\mid x_m'$, and there exists some $u'\in\mathcal A(D_m)$ such that $u'\mid y_m''$. Let $u''\in\mathcal A(D_m)$ be such that $v'v''=u'u''$, whence $\iota(u'')=g$, and set $x=(a_1u_1)(a_2u_2)v'v''x^*$, where $x^*\in\mathsf Z(\mathcal B)$. If $|x|_{\mathcal B}=4$, then $x=(a_1u_1)(a_2u_2)v'v''$ and thus $y=y_i'y_j'y_m''$, where $|y_i'|=|y_j'|=|y_m''|=2$, and thus there is a pure atom in $m$ dividing $y$. Since $v',\,v''\in\mathcal A(D_m)$ and $\iota(v')=\iota(v'')=\mathbf 0$, this contradicts $\mathbf{A1}$. Now we may assume $|x|_{\mathcal B}\geq 5$. Then we have $|x^*|_{\mathcal B}\geq 1$ and it follows that $u_1u', u_2u''\in\mathcal A(\mathcal B)$, and $x,\,bb'(u_1u')(u_2u'')x^*,\,y$ is a monotone $\mathcal R$-chain concatenating $x$ and $y$.\\
\indent\textbf{Case 2.4.} $|y|_{\mathcal B}\geq|x|_{\mathcal B}+2$, and we are not in the special situation $\mathbf{S1}$.\\
Let $a_1,\,a_2\in\mathcal A(D_i)$ be such that $a_1a_2\mid x_i''$. Since $|y_i'|>0$, there are $u_1,\,u_2\in\mathcal A(F)\setminus\mathcal A(D_i)$ such that $a_1u_1,\,a_2u_2\in\mathcal A(\mathcal B)$ and $(a_1u_1)(a_2u_2)\mid_{\mathcal B} x$.
We set $x = (a_1u_1)(a_2u_2)x^*$, where $x^*\in\mathcal A(\mathcal B)$ and $|x^*|\geq 1$, and $u_1 u_2 =v_1 v_2$ for some $v_1,\,v_2\in\mathcal A(D_i)$ such that $\iota(v_1)=\iota(v_2)=\mathbf 0$. Again we set $a_1a_2 = bb'$, where $b'\in\mathcal A(D_i)$ and $\iota(b')=\mathbf 0$, and then $x,\, bb'v_1v_2x^*,\,y$ is a monotone $\mathcal R$-chain concatenating $x$ and $y$, since $|y|_{\mathcal B}\geq |x|_{\mathcal B}+2=|x^*|+4$.
\smallskip

\noindent\textbf{Reduction 2.} By Case 2, we may now assume that $|y_i'|=0$ for all $i\in [1,r]$, and since $|x|_{\mathcal B}\leq |y|_{\mathcal B}$, this implies that $|x_i'|=0$. Therefore $|x_i''|\geq 2$ and $|y_i''|\geq 2$ for all $i\in [1,r]$. Since $x_0''=x_0=y_0=y_0''$, we have $x_i=x_i''$ for all $i\in [0,r]$.
\smallskip

\noindent\textbf{Case 3.} $x_i=x_i''$, $y_i=y_i''$, and $|x_i|=|y_i|\geq 2$ for all $i\in [0,r]$.\\
\indent{\textbf{Case 3a.}} There is some $i\in [0,r]$ such that
\[
 \sum_{\substack{\nu=0\\ \nu\neq i}}^r|x_\nu|<|x_i|
\quad\Bigg[
\mbox{and thus also }
 \sum_{\substack{\nu=0\\ \nu\neq i}}^r|y_\nu|<|y_i|
\Bigg].
\]
There exist $a_1,\,a_2,\,b_1,\,b_2\in\mathcal A(D_i)$ such that $a_1a_2\in\mathcal A(\mathcal B)$, $b_1b_2\in\mathcal A(\mathcal B)$, $a_1a_2\mid_{\mathcal B}x$, and $b_1b_2\mid_{\mathcal B}y$. Let $b\in\mathcal A(D_i)$ be such that $a_1a_2=b_1b$. Since $5\leq |x|_{\mathcal B}\leq 2|x_i''|=2|x_i|$, there exists some $a_w\in\mathcal A(D_i)$ such that $a_1a_2a_3\mid x_i$. Let $c\in\mathcal A(F)$ be such that $a_3c\in\mathcal A(\mathcal B)$ and $a_3c\mid_{\mathcal B}x$, and let $b_3\in\mathcal A(D_i)$ be such that $ba_3=b_2b_3$. If $x=(b_1b)(a_3c)x^*$, where $x^*\in\mathcal A(\mathcal B)$ and $|x^*|_{\mathcal B}\geq 1$, then $x,\,(b_1b_2)(b_3c)x^*,\,y$ is a monotone $\mathcal R$-chain concatenating $x$ and $y$.\\
\indent{\textbf{Case 3b.}} For all $i\in [0,r]$, we have
\[
 \sum_{\substack{\nu=0\\ \nu\neq i}}^r|x_\nu|\geq|x_i|
\quad\Bigg[
\mbox{and thus also }
 \sum_{\substack{\nu=0\\ \nu\neq i}}^r|y_\nu|\geq|y_i|
\Bigg].
\]
We shall prove the following reduction step.
\begin{itemize}
\item[$\mathbf{R1}$] We may assume that, for each $i\in [0,r]$, there is no pure atom in $i$ dividing either $x$ or $y$ in $\mathcal B$.
\end{itemize}
\begin{proof}[Proof of $\mathbf{R1}$]
Let $\tilde x\in\mathsf Z(\mathcal B)$ be such that $(x,\tilde x)\in\sim_{\mathcal B}$, $x\eq\tilde x$, and the number of pure atoms dividing $\tilde x$ is minimal. Assume there is at least one pure atom in $i\in [0,r]$ dividing $\tilde x$, say $a_1a_2\in\mathcal A(\mathcal B)$ with $a_1,\,a_2\in\mathcal A(D_i)$ and $a_1a_2\mid_{\mathcal B}\tilde x$. Now we find
\[
\sum_{\substack{\nu=0\\ \nu\neq i}}^r|\tilde x_\nu|\geq|\tilde x_i|-2,
\]
and thus there are $c_1,\,c_2\in\mathcal A(F)\setminus\mathcal A(D_i)$ with $c_1c_2\in\mathcal A(\mathcal B)$ and $c_1c_2\mid_{\mathcal B}\tilde x$. If $\tilde x=(a_1a_2)(c_1c_2)x^*$, where $x^*\in\mathcal A(\mathcal B)$ and $|x^*|_{\mathcal B}\geq 1$, then we set $x'=(a_1c_1)(a_2c_2)x^*$. Now we find $(\tilde x,x')\in\sim_{\mathcal B}$ and $\tilde x\eq x'$, and thus $x\eq x'$. Since there is one pure atom less dividing $x'$ than $\tilde x$, this is a contradiction.\\
The same argument applies to $y$. Therefore there exist $\tilde x,\,\tilde y\in\mathsf Z(\mathcal B)$ both not divisible by any pure atom such that $(x,\tilde x),\,(y,\tilde y)\in\sim_{\mathcal B}$, $x\eq\tilde x$, and $y\eq\tilde y$. Hence it follows that $(\tilde x,\tilde y)\in\sim_{\mathcal B}$ and if $\tilde x\eq\tilde y$, then $x\eq y$.
\end{proof}
Next we prove the following reduction step.
\begin{itemize}
 \item[$\mathbf{R2}$] We may assume that, for each $i\in [0,r]$, $x_i=y_i$.
\end{itemize}
\begin{proof}[Proof of $\mathbf{R2}$]
Trivially, we have $x_0=y_0$. Now let $i\in [1,r]$. We assert that there is some $\tilde x\in\mathsf Z(\mathcal B)$ such that $(x,\tilde x)\in\sim_{\mathcal B}$, $x\eq\tilde x$, and $z=\gcd(\tilde x_i,y_i)$ (in $F$) is maximal. Now assume that $\tilde x_i=z\tilde z$ and $y_i=z\tilde z'$, where $\tilde z,\,\tilde z'\in\mathsf Z(D_i)$ and $|\tilde z|=|\tilde z'|\geq 1$. If $|\tilde z|=|\tilde z'|=1$, then there are some $v,\,v'\in\mathcal A(D_i)$ such that $\tilde z=v$ and $\tilde z'=v'$. Now we find
\[
 \pi_i(z)v=\pi_i(z\tilde z)=\pi_i(\tilde x_i)=\pi_i(y_i)=\pi_i(z\tilde z')=\pi_i(z)v',
\]
and thus $v=v'$. But then $\gcd(\tilde x_i,y_i)=vz$, a contradiction. If $|\tilde z|=|\tilde z'|\geq 2$, then there are $a_1,\,a_2,\,b\in\mathcal A(D_i)$ with $a_1a_2\mid\tilde x_i$ and $b\mid y_i$. By $\mathbf{R1}$, there are $c_1,\,c_2\in\mathcal A(F)\setminus\mathcal A(D_i)$ such that $a_1c_1,\,a_2c_2\in\mathcal A(\mathcal B)$ and $a_1c_1,\,a_2c_2\mid_{\mathcal B}x$. There is some $b'\in\mathcal A(D_i)$ such that $a_1a_2=b'b$. If $\tilde x=(a_1c_1)(a_2c_2)x^*$, where $x^*\in\mathsf Z(\mathcal B)$ and $|x^*|_{\mathcal B}\geq 1$, then we set $\bar x=(bc_1)(b'c_2)x^*$ and find $(\tilde x,\bar x)\in\sim_{\mathcal B}$ and $\tilde x\eq\bar x$, and thus $x\eq\bar x$. Since $bz=b\gcd(\tilde x_i,y_i)\mid\gcd(\bar x_i,y_i)$, this is a contradiction.
\end{proof}
Now we fix---again arbitrarily---some $i\in [0,r]$ and choose $a\in\mathcal A(D_i)$ such that $a\mid x_i''$. Then $a\mid y_i''$, too. By $\mathbf{R1}$, there are $c,\,d\in\mathcal A(F)\setminus\mathcal A(D_i)$ such that $ac\mid_{\mathcal B}x$ and $ad\mid_{\mathcal B}y$. Again by $\mathbf{R1}$, there are $e,\,f\in\mathcal A(F)$ such that $de\mid_{\mathcal B}x$ and $cf\mid_{\mathcal B}y$.\\
Then $x$ and $y$ are of the following forms
\[
x=(ac)(de)x^*
\quad\mbox{and}\quad
y=(ad)(cf)y^*,
\]
where $x^*,\,y^*\in\mathsf Z(\mathcal B)$ and $|x^*|_{\mathcal B}=|y^*|_{\mathcal B}\geq 1$.\\
\indent\textbf{Case 3.b$\mathbf{'}$.} $ce\in\mathcal A(\mathcal B)$.\\
Then $x,\,(ad)(ce)x^*,\,y$ is a monotone $\mathcal R$-chain concatenating $x$ and $y$.\\
\indent\textbf{Case 3.b$\mathbf{''}$} $df\in\mathcal A(\mathcal B)$.\\
Then $x,\,(ac)(df)y^*,\,y$ is a monotone $\mathcal R$-chain concatenating $x$ and $y$.\\
\indent\textbf{Case 3.b$\mathbf{'''}$} We are neither in Case 3.b$\mathbf{'}$ nor in Case 3.b$\mathbf{''}$, and thus there are $j_1,\,j_2\in [0,r]\setminus\lbrace i\rbrace$ with $j_1\neq j_2$ such that $c,\,e\in\mathcal A(D_{j_1})$ and $d,\,f\in\mathcal A(D_{j_2})$. Then $ae,\,af,\,cd\in\mathcal A(\mathcal B)$ and hence
$x,\,(ae)(cd)x^*,\,(af)(cd)y^*,\,y$ is a monotone $\mathcal R$-chain concatenating $x$ and $y$.
\smallskip

Now it remains to prove the special case, where $|x|_{\mathcal B}=3$. By the length formulas from the beginning of the proof, we find that $|y|_{\mathcal B}\in\lbrace 5,6\rbrace$. If $|y|_{\mathcal B}=5$, then the length formulas imply that that there is some $i\in [1,r]$ such that $|x_i'|=1$ and $|y_i'|\geq 1$, and thus we are in the situation of Case 1.2. When we inspect the monotone $\mathcal R$-chain constructed there, we find that the same monotone $\mathcal R$-chain concatenating $x$ and $y$ exists in our situation. If $|y|_{\mathcal B}=6$, then we find that $|x_i'|=0$ and $|y_i''|=0$ for all $i\in [1,r]$. Since $6=|y|_\mathcal B\geq |x|_\mathcal B+2=5$, we are either in Case 2.2 or in Case 2.4. Again we inspect the monotone $\mathcal R$-chains constructed there and we find that the same monotone $\mathcal R$-chains concatenating $x$ and $y$ exist in our special situation.\\
Now it remains to show part 2. By \cite[Lemma 3.4]{phil11a}, we have
\begin{align*}
\cmon(\mathcal B)\leq\sup\lbrace &|y|\mid(x,y)\in\mathcal A(\sim_{\mathcal B,\mathrm{mon}}),\,\mbox{there is no monotone $\mathcal R$-chain from $x$}\\ &\mbox{to $y$},\,\mbox{and either }|x|=|y|\mbox{ or }|x|,|y|\in\mathsf L(\pi_{\mathcal B}(x))\mbox{ are adjacent}\rbrace.
\end{align*}
By part \ref{3.17.1}, there is a monotone $\mathcal R$-chain concatenating $x$ and $y$ for all $(x,y)\in\sim_\mathcal B$ with $|y|_{\mathcal B}\geq 5$ and $|y|_{\mathcal B}\geq |x|_{\mathcal B}$. Thus it suffices to consider relations $(x,y)\in\sim_\mathcal B$ with $|x|_\mathcal B\leq |y|_\mathcal B\leq 4$. By definition, we have
$\lbrace (x,y)\in\mathcal A(\sim_\mathcal B)\mid |x|_\mathcal B\leq |y|_\mathcal B\rbrace\subset\mathcal A(\sim_{\mathcal B,\mathrm{mon}})$. Since the shortest possible atom $(x,y)\in\mathcal A(\sim_{\mathcal B})\setminus\mathcal A(\sim_{\mathcal B,\mathrm{mon}})$ satisfies $|x|_{\mathcal B}>|y|_{\mathcal B}\geq 2$, we find $|xy|_{\mathcal B}\geq 5$. Hence, we may restrict to elements of $\mathcal A(\sim_{\mathcal B})$, and the assertion follows.
\end{proof}

Using \autoref{3.15}.\ref{3.15.4}, \autoref{3.16}, and \autoref{3.17} above, we can now calculate the catenary degree and the minimum distance (when $|G|=2$), and in a slightly more special but still interesting situation, we can compute the elasticity, the monotone catenary degree and the tame degree.

\begin{proposition}
\label{3.18}
Let $D$ be a monoid, $P\subset D$ a set of prime elements, $r\in\N_0$, $s\in\N_0$, $r+s\geq 1$, and let $D_i\subset[p_i]\times\wmal{D_i}=\widehat{D_i}$ be reduced half-factorial but not factorial monoids of type $(1,k_i)$ for $i\in[1,r+s]$ with $k_1=\ldots=k_r=1$ and $k_{r+1}=\ldots=k_s=2$ such that $D=\mathcal F(P)\times D_1\times\ldots\times D_{r+s}$.
Let $H\subset D$ be a saturated submonoid, let $G=\mathsf q(D/H)$ be its class group with $|G|=2$, say $G=\lbrace\mathbf 0,g\rbrace$, suppose each class in $G$ contains some $p\in P$, and define a homomorphism $\iota:D_1\times\ldots\times D_{r+s}\rightarrow G$ by $\iota(t)=[t]_{D/H}$.
Furthermore, set $I=\lbrace i\in[1,r+s]\mid(\mathcal U_1(D_i)_{\mathbf 0})^2\cap(\mathcal U_1(D_i)_g)^2\neq\emptyset\rbrace$ and $J=\lbrace i\in[r+1,r+s]\mid\mathsf c(D_i)=3\rbrace$.\\
Then
\begin{enumerate}
 \item \label{3.18.1} If $I=J=\emptyset$, then $H$ is half-factorial and $\mathsf c(H)=2$.
 \item \label{3.18.2} If $I=\emptyset$ and $J\neq\emptyset$, then $\mathsf c(H)\in\lbrace 2,3\rbrace$, and $\triangle(H)\subset\lbrace 1\rbrace$.
 \item \label{3.18.3} If $\#I=1$, then $\rho(H)\geq\frac{3}{2}$, $\mathsf c(H)=3$, and $\triangle(H)=\lbrace1\rbrace$.
 \item \label{3.18.4} If $\#I\geq 2$, then $\rho(H)=2$, $\mathsf c(H)=4$, and $\triangle(H)=\lbrace 1,2\rbrace$.
 \item \label{3.18.5} If $s=0$, then $\cmon(H)=\mathsf c(H)$. Additionally, if $\#I=1$, then $\rho(H)=\frac32$.
 \item \label{3.18.6} If $s=0$ and $\iota(p_i)=\mathbf 0$ for all $i\in [1,r]$, then $H$ is half-factorial if and only if $\mathsf t(H)=2$.
\end{enumerate}
In particular, $\min\triangle(H)\leq 1$ always holds.
\end{proposition}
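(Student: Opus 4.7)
The plan is to pass through the $T$-block monoid $\mathcal{B}=\mathcal{B}(G,T,\iota)$ with $T=D_1\times\cdots\times D_{r+s}$. By \autoref{3.5}, the transfer homomorphism $\beta:H\to\mathcal{B}$ reduces each invariant in the statement to the corresponding invariant of $\mathcal{B}$ (possibly up to the slack $\mathsf{D}(G)+1=3$ in the tame degree), so the entire proposition becomes a combinatorial computation inside $\mathcal{B}$, whose atoms are explicitly enumerated in \autoref{3.16}. One should picture atoms of $\mathcal{B}$ as coming in four flavours: the zero-sum atom $g^2$; short atoms $p_i\varepsilon$ or $p_i\varepsilon g$; long diagonal atoms $p_i^2\varepsilon$; and cross atoms $p_ip_j\varepsilon_i\varepsilon_j$.

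The conceptual heart of the argument is to recognize what $I$ and $J$ measure. If $i\in I$ with witness $\epsilon=\eta_1\eta_2=\zeta_1\zeta_2$ where $\iota(\eta_k)=\mathbf{0}$ and $\iota(\zeta_k)=g$, then (treating $\iota(p_i)=\mathbf{0}$; the other sign is symmetric) the element $p_i^2\epsilon g^2$ has the two factorizations $(p_i\eta_1)(p_i\eta_2)(g^2)$ of length $3$ and $(p_i\zeta_1 g)(p_i\zeta_2 g)$ of length $2$, witnessing $1\in\triangle(\mathcal{B})$, $\rho(\mathcal{B})\geq 3/2$, and $\mathsf{c}(\mathcal{B})\geq 3$. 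If $\#I\geq 2$, two such conflicts produce, for $B=p_i^2 p_j^2\epsilon_i\epsilon_j$, the factorizations $(p_i\eta_{i,1})(p_i\eta_{i,2})(p_j\eta_{j,1})(p_j\eta_{j,2})$ of length $4$ against $(p_ip_j\zeta_{i,1}\zeta_{j,1})(p_ip_j\zeta_{i,2}\zeta_{j,2})$ of length $2$, forcing $\rho\geq 2$, $\mathsf{c}\geq 4$, and $2\in\triangle$. An index $i\in J$ contributes an intrinsic length-changing relation in $D_i$ of catenary degree $3$ by \autoref{3.9}. These are the \emph{only} obstructions to half-factoriality.

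I would then dispatch the six parts as follows. For \ref{3.18.1}, with $I=J=\emptyset$, inspect each pair of atoms and verify every nontrivial relation preserves length, which yields half-factoriality; non-factoriality together with \autoref{3.15}.\ref{3.15.4} and explicit $2$-chains gives $\mathsf{c}(H)=2$. For \ref{3.18.2}, the length-changing relations live inside a single $D_i$ with $k_i=2$, so \autoref{3.9}.\ref{3.9.2} bounds $\mathsf{c}(H)\leq 3$ and confines $\triangle(H)\subset\{1\}$. For \ref{3.18.3} and \ref{3.18.4}, the lower bounds come from the explicit relations above while the upper bounds $\mathsf{c}(H)\leq 4$ and $\rho(H)\leq 2$ are \autoref{3.15}.\ref{3.15.4} and \autoref{3.6}.\ref{3.6.2}; the inclusion $\triangle(H)\subset[1,2]$ follows from the standard bound $\triangle\subset[\min\triangle,\mathsf{c}-2]$ combined with the previous parts. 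For \ref{3.18.5}, with $s=0$ \autoref{3.17}.\ref{3.17.2} bounds $\cmon(\mathcal{B})$ by the length of atoms of $\sim_\mathcal{B}$ of length at most $4$, and a finite case analysis matches this bound against $\mathsf{c}(\mathcal{B})$; the refinement $\rho(H)=3/2$ when $\#I=1$ follows by checking that the length-$2$-vs-length-$3$ gap from the single conflict is the extremal source of elasticity. For \ref{3.18.6}, the implication $\mathsf{t}(H)=2\Rightarrow$ half-factorial uses $\mathsf{c}(H)\leq\mathsf{t}(H)=2$ together with the contrapositives of \ref{3.18.3} and \ref{3.18.4} to force $I=\emptyset$; the converse, under $\iota(p_i)=\mathbf{0}$ for all $i$, is an explicit verification on the simplified atom list.

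The main technical obstacle will be tightening the rough bound $\mathsf{c}(H)\leq 4$ from \autoref{3.15}.\ref{3.15.4} to the exact values $2$, $3$, or $4$ dictated by $\#I$ and $J$: this requires, case by case, producing explicit bounded-distance chains between arbitrary factorizations of a given element, relying on the atom list of \autoref{3.16} and the replacement principle of \autoref{3.9}.\ref{3.9.2}. A secondary difficulty is upgrading the $\cmon$ bound to equality in \ref{3.18.5}, which needs a careful check that no atom $(x,y)\in\mathcal{A}(\sim_\mathcal{B})$ with $|x|_\mathcal{B}\leq|y|_\mathcal{B}\leq 4$ exceeds the catenary degree already witnessed.
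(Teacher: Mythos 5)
Your strategy is the paper's own: transfer everything to the block monoid $\mathcal B$ via \autoref{3.5}, use \autoref{3.15}.\ref{3.15.4} for the a priori bound $\mathsf c(\mathcal B)\leq 4$ so that (by the monoid-of-relations criterion from \cite{phil10}) only atoms of $\sim_{\mathcal B}$ of type $(k,l)$ with $2\leq k\leq l\leq 4$ matter, enumerate these from the atom list of \autoref{3.16}, and read off each invariant from which relation types occur as a function of $I$ and $J$. Your witnesses for $i\in I$ and for $\#I\geq 2$ are exactly the paper's relations \eqref{7} and \eqref{9}, and your treatment of parts 1--4 and of the $\cmon$ claim in part 5 via \autoref{3.17}.\ref{3.17.2} matches the paper.

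There is, however, one genuine gap, in part \ref{3.18.6}. The tame degree is the one invariant that does \emph{not} transfer exactly: \autoref{3.5} gives only $\mathsf t(H)\leq\mathsf t(\mathcal B)+\mathsf D(G)+1$, so an ``explicit verification on the simplified atom list'' of $\mathcal B$ yields $\mathsf t(\mathcal B)=2$ and hence merely $\mathsf t(H)\leq 5$. This slack is not a removable formality: \autoref{3.20} exhibits saturated $H\subset\mathcal F(P)\times T$ with $\mathsf t(\mathcal B)=2$ but $\mathsf t(H)\geq 3$, because distinct primes of $P$ in the same class are identified by the transfer. The converse direction of part \ref{3.18.6} therefore needs a direct argument in $H$ itself, which the paper supplies as a separate lemma (\autoref{3.19}): for each $z\in\mathsf Z_H(h)$ and each atom $a$ of $H$ dividing $h$ one constructs, case by case according to the shape of $a$, a factorization $z'\in\mathsf Z_H(h)\cap a\mathsf Z(H)$ with $\mathsf d(z,z')\leq 2$, using that the elements of $P$ are prime in $D$. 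Secondarily, your justification of $\rho(H)=\frac32$ when $s=0$ and $\#I=1$ is under-specified: one must rule out that the single type-$(2,3)$ conflict compounds along a sequence of relations to push the elasticity above $\frac32$. The paper does this by running a monotone $3$-chain from the short factorization $x$ to the long one $y$ (available by \autoref{3.17}.\ref{3.17.1}) and observing that every length-increasing step is an atom of character \eqref{7} or \eqref{8}, each of which consumes two atoms of a special kind that no length-preserving relation regenerates; hence there are at most $\frac12|x|_{\mathcal B}$ such steps and $|y|_{\mathcal B}\leq\frac32|x|_{\mathcal B}$. Your plan would need to be fleshed out along these lines to close both points.
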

\begin{proof}
We set $\mathcal B=\lbrace S\in\mathcal B(G,D_1\times\ldots\times D_{r+s}\mid\mathbf 0\nmid S\rbrace$. By \autoref{3.5}, $H$ and $D$ are reduced BF-monoids, and $H\subset D$ is a faithfully saturated submonoid. By \autoref{3.5}.\ref{3.5.4}, we obtain $\triangle(H)=\triangle(\mathcal B)$, $\rho(H)=\rho(\mathcal B)$, $\mathsf c(H)=\mathsf c(\mathcal B)$, and $\cmon(H)=\cmon(\mathcal B)$. \autoref{3.15}.\ref{3.15.1} implies $\mathsf c(D)\leq 3$, and, by \autoref{3.15}.\ref{3.15.4}, we obtain $\mathsf c(\mathcal B)=\mathsf c(H)\leq 4$.\\
By \cite[Proposition 14.1]{phil10}, we obtain $\mathsf c(\mathcal B)\leq\sup\lbrace |y|_{\mathcal B}\mid(x,y)\in\mathcal A(\sim_{\mathcal B})$, and since $\mathsf c(\mathcal B)\leq 4$, it follows that
\[
\mathsf c(\mathcal B)\leq\max\lbrace |y|_{\mathcal B}\mid (x,y)\in\mathcal A(\sim_\mathcal B),\,|x|_{\mathcal B}\leq|y|_\mathcal B\leq 4\rbrace;
\]
indeed we can replace the supremum with a maximum since we have a bounded set of integers on the right hand side.

If $(x,y)\in\mathcal A(\sim_{\mathcal B})$, then $(x,y)=(u_1\mdots u_k,v_1\mdots v_l)$, where $k=|x|_{\mathcal B}$, $l=|y|_{\mathcal B}$, and $u_i,\,v_j\in\mathcal A(\mathcal B)$ for all $i\in [1,k]$ and $j\in [1,l]$. In this case, we call the atom $(x,y)$ of type $(k,l)$ and describe it by the defining relation $u_1\mdots u_k=v_1\mdots v_l$ in $\mathcal B$. Now the equation from above reads as follows:
\[
\mathsf c(\mathcal B)=\max\lbrace |x|_{\mathcal B}\mid (x,y)\in\mathcal A(\sim_{\mathcal B})\mbox{ is of type $(k,l)$, where }2\leq k\leq l\leq 4\rbrace.
\]
Hence we proceed with a list of defining relations for all atoms of type $(k,l)$, where $2\leq k\leq l\leq 4$. An atom will be called of character $\mathcal C\in [1,15]$ if it is defined by the relation $(3.1.\mathcal C)$ in the list below.\\
Let $i,\,j\in [1,r+s]$, $i\neq j$. Then
\begin{equation}
\label{1}
g^2(p_ip_j\eps_i\eps_j)=(p_i\eps_ig)(p_j\eps_jg)
\end{equation}
describes an atom of type $(2,2)$ if and only if $\eps_i\in\mathcal U_1(D_i)$, $\eps_j\in\mathcal U_1(D_j)$, and $\iota(p_i\eps_i)=\iota(p_j\eps_j)=g$;
\begin{equation}
\label{2}
(p_ip_j\eps_i^{(1)}\eps_j^{(1)})(p_ip_j\eps_i^{(2)}\eps_j^{(2)})=(p_i^2\eps_i^{(1)}\eps_i^{(2)})(p_j^2\eps_j^{(1)}\eps_j^{(2)})
\end{equation}
describes an atom of type $(2,2)$ if and only if $\iota(p_i\eps_i^{(1)})=\iota(p_i\eps_i^{(2)})=\iota(p_j\eps_j^{(1)})=\iota(p_j\eps_j^{(2)})=g$, $\eps_i^{(1)}\eps_i^{(2)}\notin\mathcal U_1(D_i)^2_{\iota(p_i)}$, and $\eps_j^{(1)}\eps_j^{(2)}\notin\mathcal U_1(D_j)^2_{\iota(p_j)}$;
\begin{equation}
\label{3}
g^2(p_i^2\eps_1\eps_2)=(p_i\eps_1g)(p_i\eps_2g)
\end{equation}
describes an atom of type $(2,2)$ if and only if either $\eps_1,\,\eps_2\in\mathcal U_1(D_i)_{\mathbf 0}$, $\eps_1\eps_2\notin(\mathcal U_1(D_i)_g)^2$, and $\iota(p_i)=g$ or $\eps_1,\,\eps_2\in\mathcal U_1(D_i)_g$, $\eps_1\eps_2\notin(\mathcal U_1(D_i)_{\mathbf 0})^2$, and $\iota(p_i)=\mathbf 0$;
\begin{equation}
\label{4}
(p_i\eps_1)(p_i\eps_2)=(p_i\eta_1)(p_i\eta_2)
\end{equation}
describes an atom of type $(2,2)$ if and only if $\eps_1,\,\eps_2,\,\eta_1,\,\eta_2\in\mathcal U_1(D_i)$, $\iota(p_i)=\iota(\eps_1)=\iota(\eps_2)=\iota(\eta_1)=\iota(\eta_2)$, and $\eps_1\eps_2=\eta_1\eta_2$;
\begin{equation}
\label{5}
(p_i\eps_1g)(p_i\eps_2g)=(p_i\eta_1g)(p_i\eta_2g)
\end{equation}
describes an atom of type $(2,2)$ if and only if $\eps_1,\,\eps_2,\,\eta_1,\,\eta_2\in\mathcal U_1(D_i)$, $\iota(p_i\eps_1)=\iota(p_i\eps_2)=\iota(p_i\eta_1)=\iota(p_i\eta_2)=g$, and $\eps_1\eps_2=\eta_1\eta_2$;
\begin{equation}
\label{6}
(p_i\eps_1)(p_i\eps_2g)=(p_i\eta_1)(p_i\eta_2g)
\end{equation}
describes an atom of type $(2,2)$ if and only if $\eps_1,\,\eps_2,\,\eta_1,\,\eta_2\in\mathcal U_1(D_i)$, $\iota(p_i)=\iota(\eps_1)=\iota(\eta_1)$, $\iota(p_i\eps_2)=\iota(p_i\eta_2)=g$, and $\eps_1\eps_2=\eta_1\eta_2$; and
\begin{equation}
\label{7}
(p_i\eps_1g)(p_i\eps_2g)=(p_i\eta_1)(p_i\eta_2)g^2\,,
\end{equation}
describes an atom of type $(2,3)$ if and only if $\eps_1,\,\eps_2,\,\eta_1,\,\eta_2\in\mathcal U_1(D_i)$, $\eps_1\eps_2=\eta_1\eta_2$, $\iota(p_i\eps_1)=\iota(p_i\eps_2)=g$, and $\iota(p_i\eta_1)=\iota(p_i\eta_2)=\mathbf 0$. If these conditions are fulfilled, then $\eps_1\eps_2\in\mathcal U_1(D_i)_{\mathbf 0}^2\cap\mathcal U_1(D_i)_g^2$ and therefore $i\in I$. Conversely, if $i\in I$, then $\mathcal U_1(D_1)^2_{\mathbf 0}\cap\mathcal U_1(D_i)_g^2\neq\emptyset$. If $\iota(p_i)=g$, let $\eps_1,\,\eps_2\in\mathcal U_1(D_i)_{\mathbf 0}$ and $\eta_1,\,\eta_2\in\mathcal U_1(D_i)_g$ be such that $\eps_1\eps_2=\eta_1\eta_2$. If $\iota(p_i)=\mathbf 0$, let $\eps_1,\,\eps_2\in\mathcal U_1(D_i)_g$ and $\eta_1,\,\eta_2\in\mathcal U_1(D_i)_{\mathbf 0}$ be such that $\eps_1\eps_2=\eta_1\eta_2$. In any case, \eqref{7} holds.
\\
Now let $i\in I$, $j\in[1,r+s]$, and $i\neq j$. Then
\begin{equation}
\label{8}
(p_ip_j\eps_i^{(1)}\eps_j^{(1)})(p_ip_j\eps_i^{(2)}\eps_j^{(2)})=(p_i\eta_i^{(1)})(p_i\eta_i^{(2)})(p_j^2\eps_j^{(1)}\eps_j^{(2)})
\end{equation}
describes an atom of type $(2,3)$ if and only if $\eps_i^{(1)},\eps_i^{(2)},\eta_i^{(1)},\eta_i^{(2)}\in\mathcal U_1(D_i)$, $\eps_j^{(1)},\,\eps_j^{(2)}\in\mathcal U_1(D_j)$, $\eps_i^{(1)}\eps_i^{(2)}=\eta_i^{(1)}\eta_i^{(2)}$, $\iota(p_i\eps_i^{(1)})=\iota(p_i\eps_i^{(2)})=\iota(p_j\eps_j^{(1)})=\iota(p_j\eps_j^{(2)})=g$, $\iota(p_i\eta_i^{(1)})=\iota(p_i\eta_i^{(2)})=\mathbf 0$, and $\eps_j^{(1)}\eps_j^{(2)}\notin\mathcal U_1(D_j)_{\iota(p_j)}^2$. If these conditions are fulfilled, then $\eps_i^{(1)}\eps_i^{(2)}\in\mathcal U_1(D_i)_{\mathbf 0}^2\cap\mathcal U_1(D_i)_g^2$ and therefore $i\in I$. However, if $i\in I$, then a relation \eqref{8} need not hold, since we cannot guarantee that there exist $\eps_j^{(1)},\,\eps_j^{(2)}\in\mathcal U_1(D_j)$ such that $\eps_j^{(1)}\eps_j^{(2)}\notin\mathcal U_1(D_j)_{\iota(p_j)}^2$.
\\
Now let $i,\,j\in I$ and $i\neq j$. Then
\begin{equation}
\label{9}
(p_ip_j\eps_i^{(1)}\eps_j^{(1)})(p_ip_j\eps_i^{(2)}\eps_j^{(2)})=(p_i\eta_i^{(1)})(p_i\eta_i^{(2)})(p_j\eta_j^{(1)})(p_j\eta_j^{(2)})
\end{equation}
describes an atom of type $(2,4)$ if and only if $\eps_i^{(1)},\,\eps_i^{(2)},\,\eta_i^{(1)},\,\eta_i^{(2)}\in\mathcal U_1(D_i)$, $\eps_j^{(1)},\,\eps_j^{(2)},$ $\eta_j^{(1)},$ $\eta_j^{(2)}\in\mathcal U_1(D_j)$, $\eps_i^{(1)}\eps_i^{(2)}=\eta_i^{(1)}\eta_i^{(2)}$, $\eps_j^{(1)}\eps_j^{(2)}=\eta_j^{(1)}\eta_j^{(2)}$, $\iota(p_i\eps_i^{(1)})=\iota(p_i\eps_i^{(2)})=\iota(p_j\eps_j^{(1)})=\iota(p_j\eps_j^{(2)})=g$, and $\iota(p_i\eta_i^{(1)})=\iota(p_i\eta_i^{(2)})=\iota(p_j\eps_j^{(1)})=\iota(p_j\eta_j^{(2)})=\mathbf 0$. If these conditions are fulfilled, then $\eps_i^{(1)}\eps_i^{(2)}\in\mathcal U_1(D_i)_{\mathbf 0}^2\cap\mathcal U_1(D_i)_g^2$ and $\eps_j^{(1)}\eps_j^{(2)}\in\mathcal U_1(D_j)_{\mathbf 0}^2\cap\mathcal U_1(D_j)_g^2$, and therefore $i,\,j\in I$. Conversely, if $i,\,j\in I$, then a relation \eqref{9} holds (see the arguments for \eqref{7}).
Let $i\in J$, $\eps_1,\,\eps_2,\,\eps_3,\,\eta_1,\,\eta_2,\,\eta_3\in\mathcal U_1(D_i)$, and $\mathsf c_{D_i}((p_i\eps_1)(p_i\eps_2)(p_i\eps_3),(p_i\eta_1)(p_i\eta_2)(p_i\eta_3))=3$. Then
\begin{equation}
\label{10}
 (p_i\eps_1)(p_i\eps_2)(p_i\eps_3)=(p_i\eta_1)(p_i\eta_2)(p_i\eta_3)
\end{equation}
describes an atom of type $(3,3)$ if and only if $\iota(p_i)=\iota(\eps_1)=\iota(\eps_2)=\iota(\eps_3)=\iota(\eta_1)=\iota(\eta_2)=\iota(\eta_3)$;
\begin{equation}
\label{11}
 (p_i\eps_1)(p_i\eps_2)(p_i\eps_3g)=(p_i\eta_1)(p_i\eta_2)(p_i\eta_3g)
\end{equation}
describes an atom of type $(3,3)$ if and only if $\iota(p_i)=\iota(\eps_1)=\iota(\eps_2)=\iota(\eta_1)=\iota(\eta_2)$ and $\iota(p_i\eps_3)=\iota(p_i\eta_3)=g$;
\begin{equation}
\label{12}
 (p_i^2\eps_1\eps_2)(p_i\eps_3)=(p_i\eta_1)(p_i\eta_2)(p_i\eta_3)
\end{equation}
describes an atom of type $(2,3)$ if and only if $\eps_1\eps_2\notin(\mathcal U_1(D_i)_{\mathbf 0})^2\cap(\mathcal U_1(D_i)_g)^2$, $\iota(p_i\eps_1)=\iota(p_i\eps_2)=g$, and $\iota(p_i)=\iota(\eps_1)=\iota(\eta_1)=\iota(\eta_2)=\iota(\eta_3)$;
\begin{equation}
\label{13}
 (p_i^2\eps_1\eps_2)(p_i\eps_3g)=(p_i\eta_1)(p_i\eta_2)(p_i\eta_3g)
\end{equation}
describes an atom of type $(2,3)$ if and only if $\eps_1\eps_2\notin(\mathcal U_1(D_i)_{\mathbf 0})^2\cap(\mathcal U_1(D_i)_g)^2$, $\iota(p_i\eps_1)=\iota(p_i\eps_2)=\iota(p_i\eps_3)=\iota(p_i\eta_3)=g$, and $\iota(p_i)=\iota(\eta_1)=\iota(\eta_2)$;
\begin{equation}
\label{14}
 (p_i^2\eps_1\eps_2)(p_i\eps_3)=(p_i^2\eta_1\eta_2)(p_i\eta_3)\,,
\end{equation}
describes an atom of type $(2,2)$ if and only if $\eps_1\eps_2,\,\eta_1\eta_2\notin(\mathcal U_1(D_i)_{\mathbf 0})^2\cap(\mathcal U_1(D_i)_g)^2$, $\iota(p_i\eps_1)=\iota(p_i\eps_2)=\iota(p_i\eta_1)=\iota(p_i\eta_2)=g$, and $\iota(p_i)=\iota(\eps_3)=\iota(\eta_3)$; and
\begin{equation}
\label{15}
 (p_i^2\eps_1\eps_2)(p_i\eps_3g)=(p_i^2\eta_1\eta_2)(p_i\eta_3g)
\end{equation}
describes an atom of type $(2,2)$ if and only if $\eps_1\eps_2,\,\eta_1\eta_2\notin(\mathcal U_1(D_i)_{\mathbf 0})^2\cap(\mathcal U_1(D_i)_g)^2$, and $\iota(p_i\eps_1)=\iota(p_i\eps_2)=\iota(p_i\eps_3)=\iota(p_i\eta_1)=\iota(p_i\eta_2)=\iota(p_i\eta_3)=g$.\\
Now we can do the actual proof.
\begin{enumerate}
\item If $I=J=\emptyset$, then only atoms of characters $[1,6]$ exist, and they are all of type $(2,2)$. Hence, we obtain $\mathsf c(H)=\mathsf c(\mathcal B)=2$, and thus $H$ is half-factorial.
\item If $I=\emptyset$ and $J\neq\emptyset$, then there are atoms of characters $[1,6]\cup [10,15]$, and they are of types $(2,2)$, $(2,3)$, and $(3,3)$. Hence, it follows that $\mathsf c(H)\in\lbrace 2,3\rbrace$ and $\triangle(H)\subset\lbrace 1\rbrace$.
\item If $\#I=1$, then atoms of characters $[1,7]$ exist, and atoms of characters $\lbrace 8\rbrace\cup [10,15]$ might exist. The atoms of characters $[1,7]$ are of types $(2,2)$ and $(2,3)$, and the atoms of characters $\lbrace 8\rbrace\cup [10,15]$ are of types $(2,3)$, $(3,3)$, and $(2,2)$. Thus we have $\rho(H)\geq\frac32$ and $\mathsf c(H)=3$, and therefore $\triangle(H)=\lbrace 1\rbrace$ by \cite[Theorem 1.6.3]{MR2194494}.
\item If $\#I\geq 2$, then atoms of characters $[1,7]\cup\lbrace 9\rbrace$ exist and possibly also atoms of characters $\lbrace 8\rbrace\cup [10,15]$ exist, and they are of types $(2,2)$, $(2,3)$, and $(2,4)$. Thus we find $\mathsf c(H)=4$, $\lbrace 1,2\rbrace\subset\triangle(H)$, and $\rho(H)\geq 2$. Since $\rho(H)\leq 2$ by \autoref{3.15}.\ref{3.15.4}, we obtain the equality $\rho(H)=2$ and, by \cite[Theorem 1.6.3]{MR2194494}, we find $\triangle(H)=\lbrace 1,2\rbrace$.
\item Let $s=0$. If $I=\emptyset$, then $H$ is half-factorial by part \ref{3.18.1}, and thus $\cmon(H)=\mathsf c(H)$ by \cite[Lemma 4.4.1]{phil11a}.

If $\#I=1$, then atoms of characters $[1,7]$ exist, and atoms of character $8$ might exist. The atoms of characters $[1,7]$ are of types $(2,2)$ and $(2,3)$, and the atoms of character $8$ are also of type $(2,3)$. By \autoref{3.17}.\ref{3.17.2}, we have $\cmon(H)=\cmon(\mathcal B)\leq 3$. By part \ref{3.18.3}, we find $3=\mathsf c(H)\leq\cmon(H)$, and thus $\cmon(H)=3$.\\
It remains to show that $\rho(H)=\rho(\mathcal B)=\frac32$. By part \ref{3.18.3}, we have $\rho(H)\geq\frac32$. Thus it suffices to show that $\rho(H)\leq\frac32$. Now let $(x,y)\in\sim_\mathcal B$ with $|y|_{\mathcal B}\geq|x|_{\mathcal B}$. Then there is a monotone $3$-chain concatenating $x$ and $y$, say $x=z_0,\,z_1,\ldots,z_n=y$ with $z_1,\ldots,z_n\in\mathsf Z(\pi_\mathcal B(x))$ and $n\in\N$. Whenever $|z_{i-1}|_{\mathcal B}<|z_i|_{\mathcal B}$ for some $i\in [1,n]$, then $\mathsf d(z_{i-1},z_i)=3$ and there is an atom $(z_{i-1}',z_i')\in\mathcal A(\sim_H)$ of character $7$ or $8$ such that $z_{i-1}=d_iz_{i-1}'$ and $z_i=d_iz_i'$, where $d_i=\gcd(z_{i-1},z_i)$. Since atoms of both characters replace two very special atoms in $\mathcal A(\mathcal B)$ (on the left side) by three different atoms (on the right side) and there is no atom of character $x\in [1,6]$, which generates the first special atoms, there are at most $\frac12|x|_{\mathcal B}$ such steps, and thus $|y|_{\mathcal B}\leq\frac32|x|_{\mathcal B}$, which proves $\rho(H)\leq\frac32$.

If $\#I\geq 2$, then atoms of characters $[1,7]\cup\lbrace 9\rbrace$ exist, and possibly also atoms of character $8$ exist. The atoms of characters $x\in [1,7]\cup\lbrace 9\rbrace$ are of types $(2,2)$, $(2,3)$, and $(2,4)$, and the atoms of character $8$ are of type $(2,3)$. By \autoref{3.17}.\ref{3.17.2}, we have $\cmon(H)=\cmon(\mathcal B)\leq 4$ and, by part \ref{3.18.4}, we obtain $4=\mathsf c(H)\leq\cmon(H)$, and thus $\cmon(H)=4$.
\end{enumerate}
\smallskip

\noindent
In order to finish the proof, we need an additional Lemma.

\begin{lemma}
\label{3.19}
Let $D$ be a monoid, $P\subset D$ be a set of prime elements, $r\in\N$, and let $D_i\subset\widehat{D_i}=[p_i]\times\wmal{D_i}$ be reduced half-factorial monoids of type $(1,1)$ for all $i\in[1,r]$ such that $D=\mathcal F(P)\times D_1\times\ldots\times D_r$. Let $H\subset D$ be a saturated submonoid, let $G=\mathsf q(D/H)$ be its class group with $|G|=2$, say $G=\lbrace\mathbf 0,g\rbrace$, suppose each class in $G$ contains some $p\in P$, and define a homomorphism $\iota:D_1\times\ldots\times D_r\rightarrow G$ by $\iota(t)=[t]_{D/H}$. Furthermore, let $\mathcal B(G,D_1\times\ldots\times D_r,\iota)$ be the $(D_1\times\ldots\times D_r)$-block monoid defined by $\iota$ over $G$ and suppose $\mathcal B$ is half-factorial but not factorial.\\
Then $\mathsf t(H)=\mathsf t(\mathcal B)=2$.
\end{lemma}
\begin{proof}
Throughout the proof, we write $\mathcal B=\lbrace S\in\mathcal B(G,D_1\times\ldots\times D_{r+s},\iota)\mid\mathbf 0\nmid S\rbrace$ as in \autoref{3.5}.\ref{3.5.4}.
By \autoref{3.18}.\ref{3.18.1}-\ref{3.18.4}, we find that $\lbrace i\in[1,r]\mid(\mathcal U_1(D_i))_{\mathbf 0}\cap(\mathcal U_1(D_i))_g\neq\emptyset\rbrace=\emptyset$, and thus $\iota(\wmal{D_i})=\lbrace\mathbf 0\rbrace$ for all $i\in[1,r]$. Now let $h\in H$, $z\in\mathsf Z(h)$, and $a\in\mathcal A(H)$ be such that $a\mid h$. Then we prove that $\mathsf d(z,z')\leq 2$ for some $z'\in\mathsf Z(h)\cap a\mathsf Z(H)$. We may assume that $a\nmid z$. We find that $z$ is of the following form:
\[
z=q_1\mdots q_k(q_1^{(1)}q_1^{(2)})\mdots(q_l^{(1)}q_l^{(2)})t_1\mdots t_m,
\]
where $q_1,\ldots,q_k,\,q_1^{(1)},\,q_1^{(2)},\ldots,q_l^{(1)},\,q_l^{(2)}\in P$, $[q_1]_{D/H}=\ldots=[q_k]_{D/H}=\mathbf 0$, $[q_1^{(1)}]_{D/H}=[q_1^{(2)}]_{D/H}=\ldots=[q_l^{(1)}]_{D/H}=[q_l^{(2)}]_{D/H}=g$, and $t_1,\ldots,t_m\in\mathcal A(D_1\times\ldots\times D_r)$. Now we have the following three possibilities for $a$:
\begin{align*}
a &= \bar q & \mbox{ with } \bar q\in P\mbox{ and }[\bar q]_{D/H}=\mathbf 0,\mbox{ or} \\
a &= \bar q^{(1)}\bar q^{(2)} & \mbox{ with } \bar q^{(1)}\mbox{ and }\bar q^{(2)}\in P,\,[\bar q^{(1)}]_{D/H}=[\bar q^{(2)}]_{D/H}=g, \mbox{ or}\\
a &= u & \mbox{ with }u\in\mathcal A(D_i)\mbox{ for some } i\in[1,r].
\end{align*}
We proceed case by case. Let $a=\bar q$, where $\bar q\in P$ and $[\bar q]_{D/H}=\mathbf 0$. Since $q_1,\ldots,q_k,$ $q_1^{(1)},$ $q_1^{(2)},\ldots,q_l^{(1)},$ $q_l^{(2)}\in P$ are prime in $D$ and since $[\bar q]_{D/H}=\mathbf 0$, we find $\bar q\in\lbrace q_1,\ldots,q_k\rbrace$. Thus $a=\bar q\mid z$, a contradiction. Let $a=\bar q^{(1)}\bar q^{(2)}$, where $\bar q^{(1)},\,\bar q^{(2)}\in P$ and $[\bar q^{(1)}]_{D/H}=[\bar q^{(2)}]_{D/H}=g$. By the same arguments as before, we find $\bar q^{(1)},\,\bar q^{(2)}\in\lbrace q_1^{(1)},q_1^{(2)},\ldots,q_l^{(1)},q_l^{(2)}\rbrace$. Since $a\nmid z$, there is no $i\in [1,l]$ such that without loss of generality $\bar q^{(j)}=\bar q_i^{(j)}$ for $j=1,2$. Thus there are $i,\,j\in [1,l]$ with $i\neq j$ such that without loss of generality $\bar q^{(1)}=q_i^{(1)}$ and $\bar q^{(2)}=q_j^{(2)}$. Now we find the factorization $z'\in\mathsf Z(h)$,
\[
z' = q_1\mdots q_k
(q_i^{(1)}q_j^{(2)})(q_j^{(1)}q_i^{(2)})\prod_{s=1\\s\neq i,j}^{s=l}(q_s^{(1)}q_s^{(2)})
t_1\mdots t_m,
\]
such that $\mathsf d(z,z')=2$ and $a\mid z'$. Lastly, we consider the case $a=u$ with $u\in\mathcal A(D_i)$ for some $i\in [1,r]$. Then there are $u_1,\ldots,u_{\bar m}\in\mathcal A(D_1\times\ldots\times D_r)$ such that
\[
t_1\mdots t_m=uu_1\mdots u_{\bar m}\quad\mbox{and }\mathsf d_{D_1\times\ldots\times D_r}(t_1\mdots t_m,uu_1\mdots u_{\bar m})\leq 2.
\]
Now we find a factorization $z'\in\mathsf Z(h)$ by setting
\[
z'=q_1\mdots q_k(q_1^{(1)}q_1^{(2)})\mdots(q_l^{(1)}q_l^{(2)})uu_1\mdots u_{\bar m},
\]
and $\mathsf d(z,z')\leq 2$ follows.
\end{proof}

\begin{enumerate}
\item[6.]
Let $s=0$ and $\iota(p_i)=\mathbf 0$ for all $i\in [1,r]$. If $H$ is not half-factorial, then $\mathsf c(H)\geq 3$ and therefore $\mathsf t(H)\geq 3$. Otherwise, if $H$ is half-factorial, then $\mathsf c(H)=\mathsf c(\mathcal B)=2$, and therefore $I=\emptyset$ by points~\ref{3.18.1}-\ref{3.18.4}. Thus $\iota(u)=\iota(p_i)=\mathbf 0$ for all $u\in\mathcal A(D_i)$ and $i\in[1,r]$, and any $a\in\mathcal A(\mathcal B)$ is either of the form $a=g^2$ or $a=u$ with $u\in\mathcal A(D_i)$ for some $i\in[1,r]$. Since, by \autoref{3.9}.\ref{3.9.2}, $\mathsf t(D_i)=2$ for all $i\in[1,r]$, we have $\mathsf t(\mathcal B)=2$. Now the assertion follows by \autoref{3.19}.
\qedhere
\end{enumerate}
\end{proof}

The following example shows that the very special structure of $D$ in the hypothesis of \autoref{3.19}---in terms of \autoref{3.20}, the structure $T$---is definitely necessary for the assertion of \autoref{3.19} to hold.

\begin{example}
\label{3.20}
Let $P$ be a set of prime elements and let $T$ be an atomic monoid such that $D=\mathcal F(P)\times T$. Let $H\subset D$ be a saturated submonoid with class group $D/H=C_2$ such that each class in $C_2$ contains some $p\in P$. Let $\iota:T\rightarrow C_2$, $t\mapsto [t]_{D/H}$ be a homomorphism and $\mathcal B(C_2,T,\iota)$ the $T$-block monoid over $C_2$ defined by $\iota$. Furthermore let $\mathsf t(\mathcal B(C_2,T,\iota))=2$.\\
This situation does not imply $\mathsf t(H)=2$.
\end{example}
\begin{proof}
We write $C_2=\lbrace\mathbf 0,g\rbrace$ and we set $\mathcal B=\mathcal B(C_2,T,\iota)$ and denote by $\beta:H\rightarrow\mathcal B$ the block homomorphism of $H$ and by $\bar\beta:\mathsf Z(H)\rightarrow\mathsf Z(\mathcal B)$ the canonical extension of the block homomorphism.\\
By definition, it is sufficient to prove $\mathsf t(a,v)\geq 3$ for some $a\in H$ and some $v\in\mathcal A(H)$. Let $a\in H$ and $v\in\mathcal A(H)$.\\
We have the following four types of atoms of $H$ which are not prime:
\begin{align*}
& v=p_1p_2 & \mbox{with }p_1,\,p_2\in P\mbox{ and }[p_1]_{D/H}=[p_2]_{D/H}=g \\
& v=pt & \mbox{with }p\in P,\,t\in T\mbox{ and }[p]_{D/H}=[t]_{D/H}=g \\
& v=t_1t_2 & \mbox{with }t_1,\,t_2\in\mathcal A(T)\mbox{ and }[t_1]_{D/H}=[t_2]_{D/H}=g \\
& v=t & \mbox{with }t\in\mathcal A(T)\mbox{ and }[t]_{D/H}=g
\end{align*}
Let $z\in\mathsf Z_H(a)$. Without loss of generality, we may assume that no prime element divides $a$. Then $z$ is of the following form:
\[
z = (p_1p_2)\mdots (p_{l-1}p_l)(p_{l+1}s_1)\mdots (p_{l+m}s_m)(t_1t_2)\mdots (t_{n-1}t_n)u_1\mdots u_o.
\]
Let $v=q_1q_2$ be of the first type. Since all $p\in P$ are prime in $D$, we find $i,\,j\in[1,l+m]$ such that $p_i=q_1$ and $p_j=q_2$. Assume $i=l+1$ and $j=l+2$. Then we find
\[
z'=(p_{l+1}p_{l+2})(p_1s_1)(p_2s_2)(p_1p_2)^{-1}(p_{l+1}s_1)^{-1}(p_{l+2}s_2)^{-1}z.
\]
Thus $\mathsf d(z,z')=3$. If we apply $\bar\beta$ to $z'$, we find
\[
\bar\beta(z')=g^2(gs_1)(gs_2)(g^2)^{-1}(gs_1)^{-1}(gs_2)^{-1}\bar\beta(z)=\bar\beta(z),
\]
and thus $\mathsf d(\bar\beta(z),\bar\beta(z'))=0$.
\end{proof}

\begin{corollary}
\label{3.21}
Let $D$ be an atomic monoid, $P\subset D$ a set of prime elements, $r\in\N$, and let $D_i\subset[p_i]\times\wmal{D_i}=\widehat{D_i}$ be reduced half-factorial monoids of type $(1,1)$ for all $i\in[1,r]$ such that $D=\mathcal F(P)\times D_1\times\ldots\times D_r$. Let $H\subset D$ be a saturated atomic submonoid, $G=\mathsf q(D/H)$ its class group, and suppose each class in $G$ contains some $p\in P$.\\
Then the following are equivalent:
\begin{itemize}
 \item $\cmon(H)\leq 2$.
 \item $\mathsf c(H)\leq 2$.
 \item $H$ is half-factorial.
\end{itemize}
If, additionally, $[p_i]_{D/H}=\mathbf 0_{D/H}$ for all $i\in [1,r]$---in particular, this is true if $|G|=1$---then the following is also equivalent:
\begin{itemize}
 \item $\mathsf t(H)\leq 2$.
\end{itemize}
\end{corollary}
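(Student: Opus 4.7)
The plan is to assemble \autoref{3.6}, \autoref{3.15}, and \autoref{3.18}, splitting on the order of the class group $G$. Before the case split I would perform a small reduction: any \emph{factorial} $D_i$, being reduced and of type $(1,1)$, must equal $[p_i]$ with $p_i$ already prime in $D$; absorbing such $p_i$ into $P$ and deleting the corresponding factor preserves all hypotheses (in particular, each class in $G$ still contains a prime of $P$, since we only enlarge $P$), and lets us assume every remaining $D_i$ is half-factorial but \emph{not} factorial. In the degenerate case where this strips away all of the $D_i$, so that $D = \mathcal F(P)$ is free, the statement reduces to the classical Carlitz-type dichotomy, which follows from \autoref{3.6}.\ref{3.6.1} for $|G| \ge 3$ and direct inspection for $|G| \le 2$.

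With that reduction in place I would distinguish three cases. For $|G| = 1$ we have $H = D$, and the ``in particular'' clause of \autoref{3.15}.\ref{3.15.1} (using $k_1 = \ldots = k_r = 1$) gives $D$ half-factorial with $\mathsf c(D) = \mathsf t(D) = 2$, whence $\cmon(D) = \mathsf c(D) = 2$ by the already-cited \cite[Lemma 4.4.1]{phil11a}; so all four candidate conditions hold simultaneously. For $|G| = 2$, \autoref{3.18} applies with $s = 0$: parts \ref{3.18.1}, \ref{3.18.3}, and \ref{3.18.4} place $\mathsf c(H) \in \lbrace 2, 3, 4 \rbrace$ according to $\#I$, so $\mathsf c(H) \le 2 \Leftrightarrow I = \emptyset \Leftrightarrow H$ half-factorial; part \ref{3.18.5} then propagates this equivalence to $\cmon(H) \le 2$, and part \ref{3.18.6} gives the $\mathsf t(H) \le 2$ equivalence under the additional hypothesis $\iota(p_i) = \mathbf 0$. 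For $|G| \ge 3$, \autoref{3.15}.\ref{3.15.3} forces $\mathsf c(H) \ge 3$ (hence $\cmon(H), \mathsf t(H) \ge \mathsf c(H) \ge 3$) and \autoref{3.6}.\ref{3.6.1} gives $\rho(H) > 1$, so $H$ is not half-factorial and all four conditions fail.

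Combining the three cases yields the stated three-way equivalence, and under the extra hypothesis $[p_i]_{D/H} = \mathbf 0$ (vacuous when $|G| = 1$, and operative only when $|G| = 2$, where \autoref{3.18}.\ref{3.18.6} does the work) also the fourth. I expect no substantial obstacle: the delicate combinatorics is already done inside \autoref{3.17}, \autoref{3.18}, and \autoref{3.19}, and the corollary is a packaging of their conclusions for $s = 0$ with the trivial boundaries $|G| = 1$ and $|G| \ge 3$. The one item requiring care is the factorial-absorption step at the outset, and there it suffices to note that enlarging $P$ never destroys the hypothesis that every class of $G$ is met by a prime of $P$.
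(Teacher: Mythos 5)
Your proposal is correct and follows essentially the same route as the paper: the paper's proof is exactly the case split $|G|\geq 3$ via \autoref{3.15}.\ref{3.15.3}, $|G|=2$ via \autoref{3.18}, and $|G|=1$ via \autoref{3.15}.\ref{3.15.1}--\ref{3.15.2}. Your preliminary absorption of factorial factors $D_i=[p_i]$ into $P$ is a small but legitimate extra step (the corollary's hypotheses do not say ``not factorial,'' while \autoref{3.15} and \autoref{3.18} require it), which the paper passes over tacitly.
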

\begin{proof}
By \autoref{3.15}.\ref{3.15.3}, $|G|\geq 3$ implies $\mathsf c(H)\geq 3$ and thus that $H$ is never half-factorial. Thus we have $|G|\leq 2$. If $|G|=2$, then the assertion follows by \autoref{3.18}. If $|G|=1$, the assertion follows by \autoref{3.15}.\ref{3.15.2} and \autoref{3.15}.\ref{3.15.1}.
\end{proof}

\begin{lemma}
\label{3.22}
Let $\mathcal O$ be a locally half-factorial order in an algebraic number field.\\
Then there is a monoid $D$, a set of prime elements $P\subset D$, $r\in\N$, and reduced half-factorial but not factorial monoids $D_i\subset[p_i]\times\wmal{D_i}=\widehat{D_i}$ of type $(1,k_i)$ with $k_i\in\lbrace 1,2\rbrace$ for all $i\in[1,r]$ such that $D=\mathcal F(P)\times D_1\times\ldots\times D_r$, $\mathcal I^*(\mathcal O)\cong D$, $\punkt{\mathcal O}_{\red}\subset D$ is a saturated submonoid, $\pic(\mathcal O)=\mathsf q(D/\punkt{\mathcal O}_{\red})$ is its class group, and each class contains some $p\in P$.\\
If, additionally, all localizations of $\mathcal O$ are finitely primary monoids of exponent $1$, then $k_i=1$ for all $i\in[1,r]$.
\end{lemma}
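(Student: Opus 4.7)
The plan is to use the decomposition $\mathcal I^*(\mathcal O)\cong\coprod_{\mathfrak p\in\mathfrak X(\mathcal O)}(\punkt{\mathcal O_{\mathfrak p}})_{\red}$ recalled in \autoref{def:loc-half}, and to split $\mathfrak X(\mathcal O)$ according to the conductor $\mathfrak f=(\mathcal O:\overline{\mathcal O})$. Set $\mathcal P^{*}=\{\mathfrak p\in\mathfrak X(\mathcal O)\mid\mathfrak p\supset\mathfrak f\}=\{\mathfrak p_1,\ldots,\mathfrak p_r\}$, a finite (and, for a non-maximal order, non-empty) set, and put $D_i=(\punkt{\mathcal O_{\mathfrak p_i}})_{\red}$. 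For $\mathfrak p\in\mathfrak X(\mathcal O)\setminus\mathcal P^{*}$ the conductor is a unit in $\mathcal O_{\mathfrak p}$, hence $\mathcal O_{\mathfrak p}=\overline{\mathcal O}_{\mathfrak p}$ is a DVR and $(\punkt{\mathcal O_{\mathfrak p}})_{\red}$ is free on one generator; collect these generators into $P$. Then $D=\mathcal F(P)\times D_1\times\ldots\times D_r$ is isomorphic to $\mathcal I^{*}(\mathcal O)$ by the cited decomposition.

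Next I would pin down the local structure. Since the integral closure of $\mathcal O_{\mathfrak p_i}$ is a finite $\mathcal O_{\mathfrak p_i}$-module, $(\punkt{\mathcal O_{\mathfrak p_i}})_{\red}$ is finitely primary of rank equal to the number of maximal ideals of $\overline{\mathcal O}_{\mathfrak p_i}$. Local half-factoriality combined with \autoref{3.9} forces this rank to be $1$, so $\overline{\mathcal O}_{\mathfrak p_i}$ is a DVR and $D_i$ is of type $(1,k_i)$ for some $k_i\in\N$; non-factoriality of $D_i$ follows from $\mathcal O_{\mathfrak p_i}\subsetneq\overline{\mathcal O}_{\mathfrak p_i}$, which is a consequence of $\mathfrak p_i\supset\mathfrak f$.

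The hard part is bounding $k_i$ by $2$. For this I would use the half-factoriality criterion $(\mathcal U_1(D_i))^{l}=\mathcal U_l(D_i)$ for all $l\in\N$ from \autoref{3.9}. Translating to $\mathcal O$, the set $\mathcal U_l(D_i)$ records the residue classes in $\overline{\mathcal O}_{\mathfrak p_i}^{\times}/\mathcal O_{\mathfrak p_i}^{\times}$ of units $u$ with $\pi^{l}u\in\mathcal O_{\mathfrak p_i}$, where $\pi$ denotes a uniformizer of $\overline{\mathcal O}_{\mathfrak p_i}$. Finiteness of this quotient (residue fields are finite) together with the multiplicative inclusion $\mathcal U_i(D_i)\mathcal U_j(D_i)\subset\mathcal U_{i+j}(D_i)$ allows one to show that the least $k$ with $(\mathcal U_1(D_i))^{k}=\wmal{D_i}$ cannot exceed $2$. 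This argument uses structural results on half-factorial local orders in algebraic number fields from \cite[Chapter~3]{MR2194494}, ultimately going back to \cite{MR781157}. This is where the number-theoretic input really enters, and it is the main obstacle.

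The remaining conclusions are formal. The embedding $\punkt{\mathcal O}_{\red}\hookrightarrow D$ given by $a\mal{\mathcal O}\mapsto (a\mathcal O_{\mathfrak p})_{\mathfrak p}$ is saturated because divisibility of principal invertible ideals in $\mathcal I^{*}(\mathcal O)$ coincides with divisibility of elements in $\mathcal O$ up to units, and $\mathsf q(D/\punkt{\mathcal O}_{\red})=\pic(\mathcal O)$ is built into the definition of the Picard group. That every class in $\pic(\mathcal O)$ is represented by some $p\in P$ is the classical fact that each invertible ideal class of an order contains an invertible prime ideal coprime to the conductor. Finally, if every localization of $\mathcal O$ is finitely primary of exponent $1$, then each $D_i$ has exponent $k_i=1$ by construction, yielding the additional claim.
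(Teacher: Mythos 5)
Your overall route is the same as the paper's: decompose $\mathcal I^*(\mathcal O)\cong\prod_{\mathfrak p\in\mathfrak X(\mathcal O)}(\punkt{\mathcal O_{\mathfrak p}})_{\red}$ via \cite[Theorem 3.7.1]{MR2194494}, split off the primes not containing the conductor as the free part $\mathcal F(P)$, use \autoref{3.9}.\ref{3.9.1} to force rank $1$ and type $(1,k_i)$ at the conductor primes, get non-factoriality from $\mathcal O_{\mathfrak p_i}\subsetneq\overline{\mathcal O}_{\mathfrak p_i}$, and quote the density of primes in ideal classes for the last condition. All of that matches the paper's argument.

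The one place where your proposal has a genuine gap is exactly the step you flag as ``the main obstacle'': the bound $k_i\leq 2$. The mechanism you offer --- finiteness of $\wmal{D_i}$ modulo $\mal{D_i}$ together with $\mathcal U_i(D_i)\mathcal U_j(D_i)\subset\mathcal U_{i+j}(D_i)$ --- does not by itself bound the exponent; finiteness gives no control on how many factors of $\mathcal U_1$ are needed to exhaust $\wmal{D_i}$, and there is no a priori reason the filtration stabilizes after two steps. What is actually needed is Kainrath's characterization of half-factorial local orders, which the paper invokes as \cite[Corollary 3.5]{MR2140704} and restates in \autoref{4.1}: half-factoriality of $\mathcal O_{\mathfrak p}$ is \emph{equivalent} to $(U_{1,p}(\mathcal O_{\mathfrak p}))^2=\mal{\overline{\mathcal O_{\mathfrak p}}}$, a nontrivial statement about products of elements of a $k$-subspace covering the finite residue field $\overline k$. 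Once you have that, the bound is immediate: $(\mathcal U_1(D_i))^2\subset\mathcal U_2(D_i)$ forces $\mathcal U_2(D_i)=\wmal{D_i}$, hence exponent at most $2$. So you should replace the appeal to ``structural results \ldots\ ultimately going back to \cite{MR781157}'' by the precise citation of Kainrath's result (or reproduce its finite-field counting argument); as written, the step is not established. A minor further point: for $r\in\N$ (i.e.\ $r\geq 1$) you do need $\mathcal O$ non-principal so that $\mathcal P^*\neq\emptyset$, which you correctly note parenthetically and which is implicit in the paper's intended use of the lemma.
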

\begin{proof}
Let $\mathcal O$ be an order in an algebraic number field and suppose $\mathcal I^*(\mathcal O)$ is half-factorial. We set $\overline{\mathcal O}$ for the integral closure of $\mathcal O$ and set $\mathfrak f=(\mathcal O:\overline{\mathcal O})$, $\mathcal P=\lbrace p\in\mathfrak X(\mathcal O)\mid\mathfrak p\not\supset\mathfrak f\rbrace$, $\mathcal P^*=\lbrace\mathfrak p\in\mathfrak X(\mathcal O)\mid\mathfrak p\supset\mathfrak f\rbrace$, and
\[
T=\prod_{\mathfrak p\in\mathcal P^*}(\punkt{\mathcal O_{\mathfrak p}})_{\mathrm{red}}.
\]
By \cite[Theorem 3.7.1]{MR2194494}, we find that $\mathcal P^*$ is finite, $\punkt{\mathcal O}_{red}\subset\mathcal F(\mathcal P)\times T$ is a saturated and cofinal submonoid, $\pic(\mathcal O)=\mathcal C_{\mathsf v}(\mathcal O)=(\mathcal F(\mathcal P)\times T)/\punkt{\mathcal O}_{red}$, and, for all $\mathfrak p\in\mathfrak X(\mathcal O)$, $\punkt{\mathcal O_{\mathfrak p}}$ is a finitely primary monoid of rank $s_{\mathfrak p}$, where $s_{\mathfrak p}$ is the number of prime ideals $\overline{\mathfrak p}\in\mathfrak X(\overline{\mathcal O})$ such that $\overline{\mathfrak p}\cap\mathcal O=\mathfrak p$. For $\mathfrak p\in\mathcal P^*$, the local domain $\mathcal O_{\mathfrak p}$ is not integrally closed, hence not factorial, and therefore the monoid $(\punkt{\mathcal O_{\mathfrak p}})_{\red}$ is not factorial either. Since $\mathcal I^*(\mathcal O)\cong\prod_{\mathfrak p\in\mathfrak X(\mathcal O)}(\punkt{\mathcal O_{\mathfrak p}})_{\mathrm{red}}$ by \cite[Theorem 3.7.1]{MR2194494}, we find, for all $\mathfrak p\in\mathfrak X(\mathcal O)$, that $\mathcal O_{\mathfrak p}$ is half-factorial, and thus, by the additional statement in \autoref{3.9}.\ref{3.9.1}, $\punkt{\mathcal O_{\mathfrak p}}$ is a half-factorial monoid of type $(1,k_{\mathfrak p})$, where $k_{\mathfrak p}$ is the rank of $\punkt{\mathcal O_{\mathfrak p}}$. By \cite[Corollary 3.5]{MR2140704}, we find $k_{\mathfrak p}\leq 2$. Now we set $D_i=(\punkt{\mathcal O_{\mathfrak p}})_{\mathrm{red}}$ for some $\mathfrak p\in\mathcal P^*$ such that $T=D_1\times\ldots\times D_r$ and we set $P=\mathcal P$. By \cite[Corollary 2.11.16]{MR2194494}, every class contains infinitely many primes $p\in P$. Since, by the above, $k_i$ is the exponent of $D_i$ for all $i\in[1,r]$, the additional statement is obvious.
\end{proof}

\medskip
\subsection*{The final proof of the main theorem}

\begin{proof}[Final proof of \autoref{main}]
By \autoref{3.22}, there is a monoid $D$, a set of prime elements $P\subset D$, $r\in\N$, and reduced half-factorial but not factorial monoids $D_i\subset[p_i]\times\wmal{D_i}=\widehat{D_i}$ of type $(1,k_i)$ with $k_i\in\lbrace 1,2\rbrace$ for all $i\in[1,r]$ such that $D=\mathcal F(P)\times D_1\times\ldots\times D_r$, $\mathcal I^*(\mathcal O)\cong D$, $\punkt{\mathcal O}_{\red}\subset D$ is a saturated submonoid, $\pic(\mathcal O)=\mathsf q(D/\punkt{\mathcal O}_{\red})$ is its class group, and each class contains some $p\in P$.
\begin{enumerate}
 \item If $|\pic(\mathcal O)|=1$, then the assertion follows by \autoref{3.15}.\ref{3.15.2}.
 \item If $|\pic(\mathcal O)|\geq 3$, then the assertion follows by \autoref{3.15}.\ref{3.15.3}.
 \item If $|\pic(\mathcal O)|=2$, then $\rho(\mathcal O)\leq 2$ and $2\leq\mathsf c(\mathcal O)\leq 4$ follow by \autoref{3.15}.\ref{3.15.4}. If, additionally, all localizations of $\mathcal O$ are finitely primary monoids of exponent $1$, then, by \autoref{3.22}, we have $k_i=1$ for all $i\in[1,r]$. If $k=0$, then we are in the situation of \autoref{3.18}.\ref{3.18.1}, and thus $\mathcal O$ is half-factorial, $\mathsf c(\mathcal O)=2$, and $\triangle(\mathcal O)=\emptyset$. If $k\geq 2$, then we are in the situation of \autoref{3.18}.\ref{3.18.4}, and thus $\rho(\mathcal O)=2$, $\mathsf c(\mathcal O)=4$, and $\triangle(\mathcal O)=\lbrace 1,2\rbrace$. If $k=1$, then we are in the situation of \autoref{3.18}.\ref{3.18.3}, and thus $\rho(\mathcal O)\geq\frac32$, $\mathsf c(\mathcal O)=3$, and $\triangle(\mathcal O)=\lbrace 1\rbrace$. Since $k_i=1$ for all $i\in [1,r]$, we may use \autoref{3.18}.\ref{3.18.5}. Thus we find $\rho(\mathcal O)=\frac32$ if $k=1$ and $\cmon(\mathcal O)=\mathsf c(\mathcal O)$ in all cases. Putting all this together, we obtain the formulas in the assertion. The equivalence of the four statements follows by \autoref{3.21}.
\end{enumerate}
In particular, in all situations, we find $\min\triangle(\mathcal O)\leq 1$.
\end{proof}

\bigskip
\section{Consequences and refinements of the main theorem}
\bigskip

In the case of quadratic and cubic number fields, we can do even better.
First, we recall and reformulate a definition and the key result from \cite{MR2140704}.\\
Let $\mathcal O$ be an order in an algebraic number field and $\mathfrak p\in\mathfrak X(\mathcal O)$. Then we call $\mathcal O_{\mathfrak p}$ a local order. Now let $\mathcal O_{\mathfrak p}$ be a local order such that its integral closure $\overline{\mathcal O_{\mathfrak p}}$ is local too. Now we fix the following notations. We denote by $\mathfrak m$ respectively $\overline{\mathfrak m}$ the maximal ideal of $\mathcal O_{\mathfrak p}$ respectively $\overline{\mathcal O_{\mathfrak p}}$, by $k=\mathcal O_{\mathfrak p}/\mathfrak m$ and $\overline{k}=\overline{\mathcal O_{\mathfrak p}}/\overline{\mathfrak m}$ the residue class fields, and by $\pi:\overline{\mathcal O_{\mathfrak p}}\rightarrow\overline{k}$ the canonical homomorphism. For a prime $p\in\overline{\mathcal O_{\mathfrak p}}$ and $i\in\N$, we set
\[
U_{i,p}(\mathcal O_{\mathfrak p})=\lbrace \eps\in\mal{\overline{\mathcal O_{\mathfrak p}}}\mid\eps p^i\in\mathcal O_{\mathfrak p}\rbrace
\quad\mbox{and}\quad
V_{i,p}(\mathcal O_{\mathfrak p})=\pi(U_{i,p}(\mathcal O_{\mathfrak p}))\cup\lbrace 0\rbrace,
\]
as in \cite{MR2140704}. Then $V_{i,p}(\mathcal O_{\mathfrak p})$ is a $k$-subspace of $\overline{k}$ by \cite{MR2140704}.

\begin{lemma}[{\cite[Theorem 3.3]{MR2140704}}]
\label{4.1}
Using the above notations, the following are equivalent:
\begin{enumerate}
 \item\label{4.1.1} $\mathcal O_{\mathfrak p}$ is half-factorial.
 \item\label{4.1.2} $(U_{1,p}(\mathcal O_{\mathfrak p}))^2=\mal{\overline{\mathcal O_{\mathfrak p}}}$.
 \item\label{4.1.3} $\lbrace xy\mid (x,y)\in V_{1,p}(\mathcal O_{\mathfrak p})\times V_{1,p}(\mathcal O_{\mathfrak p})\rbrace=\overline{k}$.
\end{enumerate}
\end{lemma}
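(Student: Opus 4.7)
The plan is to translate each of the three conditions into the language of the finitely primary monoid $H=(\punkt{\mathcal O_{\mathfrak p}})_{\mathrm{red}}$ and then invoke \autoref{3.9}.\ref{3.9.1}, using the conductor to control the passage between units of $\overline{\mathcal O_{\mathfrak p}}$ and residues in $\overline k$.

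Setup: since $\mathcal O_{\mathfrak p}$ and $\overline{\mathcal O_{\mathfrak p}}$ are both local, $\overline{\mathcal O_{\mathfrak p}}$ is a discrete valuation ring with uniformizer $p$, and $H$ is a reduced finitely primary monoid of rank $1$ with $\widehat H=[p]\times\mal{\overline{\mathcal O_{\mathfrak p}}}$. Under this identification $\mathcal U_i(H)=U_{i,p}(\mathcal O_{\mathfrak p})$ for every $i\in\N_0$, and the exponent $k$ of $H$ is the least integer with $\overline{\mathfrak m}^k\subset\mathcal O_{\mathfrak p}$; in the setting of orders in number fields that is relevant here (as in \autoref{3.22}), $k\in\lbrace 1,2\rbrace$.

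For (\ref{4.1.1})$\Leftrightarrow$(\ref{4.1.2}), I would apply \autoref{3.9}.\ref{3.9.1} to $H$: half-factoriality is equivalent to $(\mathcal U_1(H))^l=\mathcal U_l(H)$ for all $l\in\N$. Since $\mathcal U_l(H)=\wmal H$ for every $l\geq k$ and the sequence $(\mathcal U_1)^l$ is increasing and bounded above by $\wmal H$, the full family of conditions collapses to the single equation $(\mathcal U_1)^k=\wmal H$, which for $k\leq 2$ reads exactly $(U_{1,p})^2=\mal{\overline{\mathcal O_{\mathfrak p}}}$. Conversely, condition (\ref{4.1.2}) implies $(\mathcal U_1)^l=\wmal H=\mathcal U_l$ for every $l\geq 2$, and trivially for $l=1$, so half-factoriality follows again from \autoref{3.9}.\ref{3.9.1}.

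For (\ref{4.1.2})$\Leftrightarrow$(\ref{4.1.3}), unravel the definitions: $V_{1,p}=\pi(U_{1,p})\cup\lbrace 0\rbrace$, so
\[
\lbrace xy\mid(x,y)\in V_{1,p}\times V_{1,p}\rbrace=\pi\bigl((U_{1,p})^2\bigr)\cup\lbrace 0\rbrace,
\]
while $\overline k=\pi(\mal{\overline{\mathcal O_{\mathfrak p}}})\cup\lbrace 0\rbrace$. Hence (\ref{4.1.3}) says precisely $\pi((U_{1,p})^2)=\mal{\overline k}$. The direction (\ref{4.1.2})$\Rightarrow$(\ref{4.1.3}) is immediate. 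For the converse, take $u\in\mal{\overline{\mathcal O_{\mathfrak p}}}$; by (\ref{4.1.3}) there exist $\eps_1,\eps_2\in U_{1,p}$ with $\pi(\eps_1\eps_2)=\pi(u)$, whence $u=\eps_1\eps_2 w$ with $w\in 1+\overline{\mathfrak m}$. It remains to show $1+\overline{\mathfrak m}\subset(U_{1,p})^2$; this follows from the conductor inclusion $\overline{\mathfrak m}^k\subset\mathcal O_{\mathfrak p}$ (so that $p\cdot(1+\overline{\mathfrak m})\subset\mathcal O_{\mathfrak p}$ when $k\leq 2$, placing $1+\overline{\mathfrak m}$ already in $U_{1,p}$), combined once more with the inclusion $(U_{1,p})^2\supset U_{1,p}\cdot U_{1,p}$. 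The main obstacle is exactly this last lift from (\ref{4.1.3}) back to (\ref{4.1.2}): a residue-field equality does not pull back to a unit equality by itself, and it is the bound on the exponent—i.e., the conductor condition on $\overline{\mathfrak m}$—that is doing the genuine work to force $1+\overline{\mathfrak m}$ into $(U_{1,p})^2$.
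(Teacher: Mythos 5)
The paper does not actually prove this lemma: it is quoted directly from Kainrath \cite[Theorem 3.3]{MR2140704}, so your argument has to stand on its own, and it does not. The translation into the reduced finitely primary monoid $H=(\punkt{\mathcal O_{\mathfrak p}})_{\red}$ and the appeal to \autoref{3.9}.\ref{3.9.1} are a sensible start, and your direction \ref{4.1.2}$\Rightarrow$\ref{4.1.1} is correct and needs no hypothesis on the exponent: $(\mathcal U_1(H))^2=\wmal H$ forces $\mathcal U_l(H)=\wmal H$ for all $l\geq 2$, hence exponent at most $2$ and then half-factoriality; likewise \ref{4.1.2}$\Rightarrow$\ref{4.1.3} is immediate. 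But the two remaining directions are where the theorem actually lives, and both of your arguments for them rest on the blanket assertion in your setup that the exponent satisfies $k\in\lbrace 1,2\rbrace$. That assertion is unjustified and circular: for an arbitrary local order the exponent is unbounded (take the localization of $\Z+p^{n}\Z[i]$ at the prime above $p$ for large $n$), and the bound $k\leq 2$ invoked in \autoref{3.22} is Kainrath's Corollary 3.5, which holds only for \emph{half-factorial} local orders and is itself a consequence of the very equivalence you are trying to prove.

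Concretely, for \ref{4.1.1}$\Rightarrow$\ref{4.1.2}, \autoref{3.9}.\ref{3.9.1} only yields $(\mathcal U_1(H))^2=\mathcal U_2(H)$; the missing step $\mathcal U_2(H)=\wmal H$ is exactly the assertion that a half-factorial local order has exponent at most $2$, and this is \emph{not} a formal consequence of half-factoriality of the abstract monoid: taking $\wmal H=\langle e\rangle$ cyclic of order $n\geq 4$ and $\mathcal U_l(H)=\lbrace 1,e,\ldots,e^l\rbrace$ gives a half-factorial reduced finitely primary monoid of rank $1$ and exponent $n-1$ with $(\mathcal U_1(H))^2\neq\wmal H$ (compare the example following \autoref{3.9}). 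Excluding such monoids for $(\punkt{\mathcal O_{\mathfrak p}})_{\red}$ is precisely the arithmetic content of Kainrath's theorem (the $k$-vector space structure of $V_{1,p}(\mathcal O_{\mathfrak p})$ inside $\overline k$ and a counting/lifting argument), none of which appears in your proof. For \ref{4.1.3}$\Rightarrow$\ref{4.1.2}, the step ``$p\cdot(1+\overline{\mathfrak m})\subset\mathcal O_{\mathfrak p}$'' both presupposes $p\in\mathcal O_{\mathfrak p}$ (not given: $p$ is a uniformizer of $\overline{\mathcal O_{\mathfrak p}}$, not necessarily an element of the order) and again uses $k\leq 2$, which is unavailable at that point. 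The passage from the residue-field identity back to an identity of unit sets, i.e. $U_{i,p}(\mathcal O_{\mathfrak p})=\pi^{-1}(V_{i,p}(\mathcal O_{\mathfrak p})\setminus\lbrace 0\rbrace)$, is the technical heart of \cite[Lemma 3.2]{MR2140704}; you correctly identify it as the main obstacle, but you do not overcome it.
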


\begin{lemma}
\label{4.2}
Let $\mathcal O$ be an order in an algebraic number field and $\mathfrak p\in\mathfrak X(\mathcal O)$ such that $\mathcal O_{\mathfrak p}$ is half-factorial.
\begin{enumerate}
 \item\label{4.2.1} $\overline{\mathcal O_{\mathfrak p}}$ is local and every atom of $\mathcal O_{\mathfrak p}$ is a prime of $\overline{\mathcal O_{\mathfrak p}}$.
 \item\label{4.2.2} Let $\mathfrak m$ respectively $\overline{\mathfrak m}$ be the maximal ideals of $\mathcal O_{\mathfrak p}$ respectively $\overline{\mathcal O_{\mathfrak p}}$ and let $k=\mathcal O_{\mathfrak p}/\mathfrak m$ and $\overline{k}=\overline{\mathcal O_{\mathfrak p}}/\overline{\mathfrak m}$ be the residue class fields.\\
 If $\dim_k\overline{k}\leq 3$, then $\punkt{\mathcal O_{\mathfrak p}}\subset\punkt{\overline{\mathcal O_{\mathfrak p}}}$ is a finitely primary monoid of exponent $1$.\\
 In particular, if $\mathcal O$ is an order in a quadratic or cubic number field, then $\punkt{\mathcal O_{\mathfrak p}}\subset\punkt{\overline{\mathcal O_{\mathfrak p}}}$ is a finitely primary monoid of exponent $1$.
\end{enumerate}
\end{lemma}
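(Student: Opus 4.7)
The plan is to exploit Lemma~\ref{4.1} to rephrase half-factoriality of $\mathcal O_{\mathfrak p}$ as a linear-algebraic condition on the $k$-subspace $V:=V_{1,p}(\mathcal O_{\mathfrak p})\subset\overline k$, and then combine this with the well-known bound ``exponent $\le 2$'' from \cite[Corollary 3.5]{MR2140704}. Part~(1) I would settle from the general structure theory of orders; part~(2) splits into two steps: a linear-algebraic reduction showing $V=\overline k$ when $\dim_k\overline k\le 3$, and a bookkeeping argument using $\overline{\mathfrak m}^2\subset\mathcal O_{\mathfrak p}$ to upgrade $V=\overline k$ to the exponent-$1$ condition $U_{1,p}=\mal{\overline{\mathcal O_{\mathfrak p}}}$.

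For part~(1), I would invoke \cite[Theorem 3.7.1]{MR2194494}: $\punkt{\mathcal O_{\mathfrak p}}$ is a reduced finitely primary monoid whose rank equals the number of primes of $\overline{\mathcal O_{\mathfrak p}}$ lying over $\mathfrak p$. Half-factoriality, via \autoref{3.9}.\ref{3.9.1}, forces this rank to be~$1$, so $\overline{\mathcal O_{\mathfrak p}}$ has a unique maximal ideal, i.e.\ is local. The additional statement of \autoref{3.9}.\ref{3.9.1} further describes the atoms of $\punkt{\mathcal O_{\mathfrak p}}$ as products $p\eps$ with $p$ a uniformizer of the DVR $\overline{\mathcal O_{\mathfrak p}}$ and $\eps\in\mathcal U_1(\punkt{\mathcal O_{\mathfrak p}})\subset\mal{\overline{\mathcal O_{\mathfrak p}}}$, and each such element, being a uniformizer times a unit, is itself a prime of $\overline{\mathcal O_{\mathfrak p}}$.

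For part~(2), by the remark preceding \autoref{4.1}, $V$ is a $k$-subspace of $\overline k$, and by \autoref{4.1} it satisfies $V\cdot V=\overline k$ as sets. The first reduction is: $\dim_k\overline k\le 3$ implies $V=\overline k$. Any $V$ with $\dim V\le 1$ forces $V\cdot V$ into a $k$-subspace of dimension $\le 1$; this handles the cases $\dim_k\overline k\in\{1,2\}$ at once, and in dimension $3$ it leaves $\dim V\in\{2,3\}$. To exclude $\dim V=2$, write $V=kv_1+kv_2$; then the condition that every element of $\overline k$ has the shape $(av_1+bv_2)(cv_1+dv_2)$ with $a,b,c,d\in k$ translates, after expressing $\overline k$ in the putative $k$-basis $\{v_1^2,v_1v_2,v_2^2\}$, into the statement that every binary quadratic form $\alpha X^2+\beta XY+\gamma Y^2\in k[X,Y]$ splits as a product of two $k$-linear forms. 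Since the residue field $k$ is finite, hence not algebraically closed, it carries an irreducible monic quadratic $T^2+\beta T+\gamma\in k[T]$; its homogenisation $X^2+\beta XY+\gamma Y^2$ fails to split, a contradiction. Thus $V=\overline k$.

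It remains to upgrade $V=\overline k$ to $U_{1,p}=\mal{\overline{\mathcal O_{\mathfrak p}}}$, which is the very definition of exponent~$1$. Given $u\in\mal{\overline{\mathcal O_{\mathfrak p}}}$, using $V_{1,p}=\overline k$ I choose $\eps\in U_{1,p}$ with $\pi(\eps)=\pi(u)$, so $u=\eps(1+m)$ for some $m\in\overline{\mathfrak m}$; then $pu=p\eps+p\eps m$, where $p\eps\in\mathcal O_{\mathfrak p}$ by definition of $U_{1,p}$ and $p\eps m\in p\overline{\mathfrak m}=\overline{\mathfrak m}^2\subset\mathcal O_{\mathfrak p}$ by \cite[Corollary 3.5]{MR2140704}, which caps the exponent at $2$ (equivalently, $\overline{\mathfrak m}^2\subset\mathcal O_{\mathfrak p}$). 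Hence $u\in U_{1,p}$ and the exponent equals~$1$. The ``in particular'' clause for orders in quadratic or cubic number fields is immediate since the residue degree $\dim_k\overline k$ at any non-zero prime of $\mathcal O$ is bounded by the degree of the number field. The main obstacle is the quadratic-form step in the $\dim_k\overline k=3$, $\dim V=2$ subcase; everything else is direct manipulation.
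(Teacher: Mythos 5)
Your argument is correct and follows the same skeleton as the paper's: part (1) from \autoref{3.9}.\ref{3.9.1} (half-factoriality forces rank $1$, hence $\overline{\mathcal O_{\mathfrak p}}$ local, and the atoms $p\eps$ are uniformizer times unit), and part (2) by translating half-factoriality via \autoref{4.1} into $V\cdot V=\overline k$ for the $k$-subspace $V=V_{1,p}(\mathcal O_{\mathfrak p})$, ruling out $\dim_kV<\dim_k\overline k$, and then converting $V=\overline k$ into $U_{1,p}=\mal{\overline{\mathcal O_{\mathfrak p}}}$, i.e.\ exponent $1$. The difference is that you make self-contained the two sub-steps which the paper outsources to \cite{MR2140704}: for the critical case $\dim_k\overline k=3$, $\dim_kV=2$, the paper quotes the counting bound $\#(V*V)<q^3$ of [Lemma 2.5] there, whereas you observe that surjectivity of $(a,b,c,d)\mapsto(av_1+bv_2)(cv_1+dv_2)$ would force every binary quadratic form over the finite field $k$ to split into linear factors, contradicted by homogenising an irreducible monic quadratic (note, as you implicitly do, that $v_1^2,v_1v_2,v_2^2$ span $\overline k$ precisely because $V\cdot V=\overline k$, so they form a basis); and for the passage from $V_{1,p}=\overline k$ to $U_{1,p}=\mal{\overline{\mathcal O_{\mathfrak p}}}$ the paper quotes [Lemma 3.2] there, whereas you bootstrap from the exponent-$\le 2$ bound $\overline{\mathfrak m}^2\subset\mathcal O_{\mathfrak p}$ of [Corollary 3.5] (already used in \autoref{3.22}) by writing $pu=p\eps+p\eps m$ with $p\eps\in\mathcal O_{\mathfrak p}$ and $p\eps m\in\overline{\mathfrak m}^2$. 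Both replacements are sound; what you gain is independence from the internal lemmas of \cite{MR2140704} (retaining only its Corollary 3.5, or any other proof that $\overline{\mathfrak m}^2\subset\mathcal O_{\mathfrak p}$), at the cost of a slightly longer argument. The only cosmetic imprecision is the phrase that $\dim_kV\le 1$ ``handles'' the case $\dim_k\overline k=1$: there it yields no contradiction but rather directly gives $V=\overline k$ since $V\neq\{0\}$; this is harmless.
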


Whenever $\mathcal O_{\mathfrak p}$ is a Cohen-Kaplansky domain, i.e., whenever it has up to units only finitely many atoms, the result from \autoref{4.2}.\ref{4.2.1} can be found in \cite[Theorem 6.3]{MR1161563}.

\begin{proof}[Proof of \autoref{4.2}]
\mbox{}
\begin{enumerate}
 \item The assertion follows by \autoref{3.9}.\ref{3.9.1}.
 \item By part~\ref{4.2.1}, $\overline{\mathcal O_{\mathfrak p}}$ is local too. Thus $\mathfrak m$ respectively $\overline{\mathfrak m}$ is well-defined and $k$ respectively $\overline{k}$ is a field. Since $\overline{\mathcal O_{\mathfrak p}}$ has up to units only one prime element by part~\ref{4.2.1}, we write $V_1(\mathcal O_{\mathfrak p})$ instead of $V_{1,p}(\mathcal O_{\mathfrak p})$ and $U_1(\mathcal O_{\mathfrak p})$ instead of $U_{1,p}(\mathcal O_{\mathfrak p})$. Furthermore, we find $\mathcal U_1(\punkt{\mathcal O_{\mathfrak p}})=U_1(\mathcal O_{\mathfrak p})$. For short, we write $m=\dim_k\overline{k}$, $n=\dim_kV_1(\mathcal O_{\mathfrak p})$, and $q=\#k$. Now we distinguish three cases by $m$.\\
\textbf{Case 1} $m=1$.
Here $k=\overline{k}$ and therefore $V_1(\mathcal O_{\mathfrak p})=\overline{k}$. Thus $U_1(\mathcal O_{\mathfrak p})=\mal{\overline{\mathcal O_{\mathfrak p}}}$ by \cite[Lemma 3.2]{MR2140704}, and therefore $\punkt{\mathcal O_{\mathfrak p}}\subset\overline{\punkt{\mathcal O_{\mathfrak p}}}$ is of exponent $1$ by the additional statement of \autoref{3.9}.\ref{3.9.1}.\\
\textbf{Case 2} $m=2$.
If $n=1$, then $V_1(\mathcal O_{\mathfrak p})=k$, and therefore $V_1(\mathcal O_{\mathfrak p})*V_1(\mathcal O_{\mathfrak p})=k\neq\overline{k}$, a contradiction to \autoref{4.1}.\ref{4.1.3}. If $n=2$, then $V_1(\mathcal O_{\mathfrak p})=\overline{k}$, and the assertion follows as in Case 1.\\
\textbf{Case 3} $m=3$.
If $n=1$, then we find the same contradiction as in Case 2 when $n=1$ there. If $n=2$, then $\#(V_1(\mathcal O_{\mathfrak p})*V_1(\mathcal O_{\mathfrak p}))<q^3=\#\overline{k}$ by \cite[Lemma 2.5]{MR2140704}. This is again a contradiction to \autoref{4.1}.\ref{4.1.3}. If $n=3$, then $V_1(\mathcal O_{\mathfrak p})=\overline{k}$, and the assertion follows as in Case 1.\\
Let $K$ be the algebraic number field containing $\mathcal O$. Then we find $m\leq[K:\Q]$ and the assertion follows.
\qedhere
\end{enumerate}
\end{proof}

Now we can prove a slightly refined version of \autoref{main} for orders in quadratic and cubic number fields.

\begin{corollary}
 \label{4.3}
Let $\mathcal O$ be a non-principal, locally half-factorial order in a quadratic or cubic number field and set $\mathcal P^*=\lbrace\mathfrak p\in\mathfrak X(\mathcal O)\mid\mathfrak p\supset(\mathcal O:\overline{\mathcal O})\rbrace$.\\
\begin{enumerate}
 \item\label{4.3.1} If $|\pic(\mathcal O)|=1$, then $\mathcal O$ is half-factorial.
 \item\label{4.3.2} If $|\pic(\mathcal O)|\geq 3$, then $(\mathsf D(\pic(\mathcal O)))^2\geq\mathsf c(\mathcal O)\geq 3$, and $\min\triangle(\mathcal O)=1$.
 \item\label{4.3.3} If $|\pic(\mathcal O)|=2$, then, setting $k=\#\lbrace\mathfrak p\in\mathcal P^*\mid [\mal{\overline{\mathcal O}_{\mathfrak p}}/\mal{\mathcal O_{\mathfrak p}}]_{\pic(\mathcal O)}=\pic(\mathcal O)\rbrace$, it follows that
\begin{itemize}
 \item $\cmon(H)=\mathsf c(\mathcal O)=2+\min\lbrace 2,k\rbrace\in\lbrace 2,3,4\rbrace$;
 \item $\rho(\mathcal O)=\frac{1}{2}\mathsf c(\mathcal O)\in\lbrace 1,\frac{3}{2},2\rbrace$; 
 \item $\triangle(\mathcal O)=[1,\mathsf c(\mathcal O)-2]\subset[1,2]$.
\end{itemize}
\end{enumerate}
In particular, $\min\triangle(\mathcal O)\leq 1$ always holds, and the following are equivalent:
\begin{itemize}
 \item $\cmon(\mathcal O)=2$.
 \item $\mathsf c(\mathcal O)=2$.
 \item $\mathcal O$ is half-factorial.
\end{itemize}
If, additionally, $[\mathfrak p]=\mathbf 0_{\pic(\mathcal O)}$ for all $\mathfrak p\in\mathcal P^*$---this is always true if $|\pic(\mathcal O)|=1$ or if $\mathcal O$ is an order in a quadratic number field---then the following is also equivalent:
\begin{itemize}
 \item $\mathsf t(\mathcal O)=2$.
\end{itemize}
\end{corollary}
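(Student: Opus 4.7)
The plan is to derive \autoref{4.3} directly from \autoref{main} by showing that in degree at most $3$ the refined hypothesis of part~\ref{main3}---that all localizations be finitely primary of exponent~$1$---is automatic.

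First I would invoke \autoref{3.22} to obtain the standard decomposition $D = \mathcal{F}(P) \times D_1 \times \cdots \times D_r$ with $\punkt{\mathcal{O}}_{\red} \hookrightarrow D$ a saturated submonoid, $\mathcal{I}^*(\mathcal{O}) \cong D$, $\pic(\mathcal{O}) = \mathsf{q}(D/\punkt{\mathcal{O}}_{\red})$, and each $D_i \cong (\punkt{\mathcal{O}_{\mathfrak{p}_i}})_{\red}$ a reduced half-factorial monoid of type $(1, k_i)$ with $k_i \in \{1, 2\}$, where $\mathfrak{p}_i$ ranges over $\mathcal{P}^*$. The decisive step is to force every $k_i = 1$: for each $\mathfrak{p} \in \mathcal{P}^*$ the residue-field dimension satisfies $\dim_k \overline{k} \leq [K:\mathbb{Q}] \leq 3$, so \autoref{4.2}.\ref{4.2.2} applies and $\punkt{\mathcal{O}_\mathfrak{p}}$ is finitely primary of exponent~$1$.

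With this in hand, parts~\ref{4.3.1} and~\ref{4.3.2} are transcriptions of \autoref{main}.\ref{main1} and \autoref{main}.\ref{main2}, and the explicit formulas in part~\ref{4.3.3} for $\cmon$, $\mathsf{c}$, $\rho$, and $\triangle$, together with the three-fold equivalence of $\cmon(\mathcal{O})=2$, $\mathsf{c}(\mathcal{O})=2$, and half-factoriality, transfer directly from the refined part of \autoref{main}.\ref{main3}, whose exponent-$1$ hypothesis is now unconditional. The parenthetical unconditional claim $\min\triangle(\mathcal{O}) \leq 1$ then follows from the corresponding final sentence of \autoref{main}.

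The delicate residual point is the equivalence with $\mathsf{t}(\mathcal{O})=2$, which requires verifying that the extra hypothesis $[\mathfrak{p}] = \mathbf{0}_{\pic(\mathcal{O})}$ for all $\mathfrak{p} \in \mathcal{P}^*$ holds automatically for orders in quadratic number fields (and trivially when $|\pic(\mathcal{O})|=1$). In the block-monoid formalism, this means the class in $G = \pic(\mathcal{O})$ of the uniformizer of $\overline{\mathcal{O}_\mathfrak{p}}$ must vanish. The main obstacle is verifying this directly in the quadratic setting; I would carry it out by exploiting the classical structure of quadratic orders $\mathcal{O} = \mathbb{Z} + f\overline{\mathcal{O}}$: any rational prime $p$ dividing the conductor $f$ is itself a principal element of $\mathcal{O}$, and the ray-class-group description of $\pic(\mathcal{O})$ modulo $f$ forces the class of the local uniformizer at each $\mathfrak{p} \in \mathcal{P}^*$ above $p$ to coincide with a divisor of $[p\mathcal{O}] = \mathbf{0}$. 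In the exponent-$1$ setting of part~\ref{4.3.3}, a short case analysis based on the (in)ertia/ramification behavior of $p$ in $\overline{\mathcal{O}}$ then forces $[\mathfrak{p}] = \mathbf{0}$ outright. Once this identification is pinned down, the fourth equivalence follows from \autoref{main}.\ref{main3}.
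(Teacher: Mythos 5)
Your proposal is correct and takes essentially the same route as the paper: both proofs hinge on \autoref{4.2}.\ref{4.2.2} (via $\dim_k\overline k\le[K:\Q]\le 3$) to make the exponent-$1$ hypothesis of \autoref{main}.\ref{main3} automatic, after which parts \ref{4.3.1}--\ref{4.3.3} and the remaining assertions are read off from the main theorem. The only marginal differences are that the paper explicitly disposes of the cases $|\pic(\mathcal O)|\ge 3$ (none of the equivalent conditions holds) and $|\pic(\mathcal O)|=1$ (where it invokes \autoref{3.9}.\ref{3.9.2} to get $\mathsf c(\mathcal O)\le 2$) in the final chain of equivalences, which your write-up leaves implicit, while you devote more care than the paper does to justifying the parenthetical claim that $[\mathfrak p]=\mathbf 0_{\pic(\mathcal O)}$ holds automatically for quadratic orders.
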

\begin{proof}
Part \ref{4.3.1} respectively part~\ref{4.3.2} follows immediately from \autoref{main}.\ref{main1} respectively \autoref{main}.\ref{main2}. By \autoref{4.2}.\ref{4.2.2}, all localizations $\mathcal O_{\mathfrak p}$ for $\mathfrak p\in\mathfrak X(\mathcal O)$ are finitely primary monoids of exponent (at most) $1$. Thus part~\ref{4.3.3} follows by the additional statement of \autoref{main}.\ref{main3}.\\
Now we prove the additional statement. First note $\min\triangle(\mathcal O)\leq 1$ follows by the additional statement of \autoref{main}. If $|\pic(\mathcal O)|\geq 3$, then none of the equivalent conditions holds by part 2. If $|\pic(\mathcal O)|=2$, then the four equivalent conditions are shown in the additional statement of \autoref{main}.\ref{main3}. If $|\pic(\mathcal O)|=1$, then $\mathcal O\cong\mathcal I^*(\mathcal O)$, and therefore $\mathcal O$ is half-factorial. By \autoref{3.22} and \autoref{4.2}.\ref{4.2.2}, there is a monoid $D$, a set of prime elements $P\subset D$, $r\in\N$, and reduced half-factorial but not factorial monoids $D_i\subset[p_i]\times\wmal{D_i}=\widehat{D_i}$ of type $(1,k_i)$ with $k_i\in\lbrace 1,2\rbrace$ for all $i\in[1,r]$ such that $D=\mathcal F(P)\times D_1\times\ldots\times D_r$ and $\mathcal I^*(\mathcal O)\cong D$. Now the other equivalent conditions follow by \autoref{3.9}.\ref{3.9.2}.
\end{proof}

If we compare the equivalent conditions in \autoref{4.3} for non-principal, locally half-factorial orders in quadratic or cubic number fields with the ones given in \cite[Theorem 1.7.3.6]{MR2194494}---see below for principal orders in algebraic number fields---we see that at least these special non-principal orders behave nearly the same as the principal ones.
\begin{theorem}[cf. \protect{\cite[Theorem 1.7.3.6]{MR2194494}}]
\label{4.4}
Let $\mathcal O$ be a principal order in a quadratic or cubic number field.\\
Then the following are equivalent.
\begin{enumerate}
 \item $\mathcal O$ is half-factorial.
 \item $|\pic(\mathcal O)|\leq 2$.
 \item $\mathsf t(\mathcal O)\leq 2$.
 \item $\mathsf c(\mathcal O)\leq 2$.
\end{enumerate}
\end{theorem}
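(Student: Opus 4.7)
The plan is to close the cycle $(1) \Rightarrow (2) \Rightarrow (3) \Rightarrow (4) \Rightarrow (1)$ by transferring everything to the block monoid of $G = \pic(\mathcal O)$. Since $\mathcal O$ is a principal order, it is a Dedekind domain, so $\mathcal I^*(\mathcal O) = \mathcal F(P)$ for $P = \mathfrak X(\mathcal O)$, and $\punkt{\mathcal O}_{\mathrm{red}} \hookrightarrow \mathcal F(P)$ is a saturated submonoid with class group $G$. By \cite[Corollary~2.11.16]{MR2194494} each class contains a prime divisor, so we are in the setting of \autoref{3.5} with $T = \{1\}$, and the associated $T$-block monoid is the ordinary block monoid $\mathcal B(G)$.

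First I would exploit the dichotomy $|G| \le 2$ versus $|G| \ge 3$. If $|G| \ge 3$, then \autoref{3.6}.\ref{3.6.1} forces $\mathsf c(\mathcal O) \ge 3$ and $\min\triangle(\mathcal O) = 1$, so neither (1) nor (4) holds. Conversely, if $|G| \le 2$, the atoms of $\mathcal B(G)$ are $\mathbf 0$ and, when $|G|=2$, also $gg$, so $\mathcal B(G)$ is free abelian and in particular factorial. Together with $\mathsf c(\mathcal O, \beta) \le 2$ from \autoref{3.5}.\ref{3.5.1}, the transfer estimates \autoref{3.5}.\ref{3.5.2}--\ref{3.5.3} give $\rho(\mathcal O) = 1$ and $\mathsf c(\mathcal O) \le 2$. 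This settles $(1) \Leftrightarrow (2)$ and $(4) \Leftrightarrow (2)$. The implication $(3) \Rightarrow (4)$ then follows from the general bound $\mathsf c(H) \le \mathsf t(H)$, valid in every atomic monoid (verified by a straightforward induction on the length of the target factorization using the tame property).

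The remaining implication $(2) \Rightarrow (3)$ needs a sharper argument, since the transfer bound in \autoref{3.5}.\ref{3.5.2} only yields $\mathsf t(\mathcal O) \le \mathsf t(\mathcal B(G)) + \mathsf D(G) + 1 = 3$ when $|G|=2$. I would argue directly. For $|G|=1$, $\mathcal O$ is factorial and $\mathsf t(\mathcal O) = 0$. For $|G|=2$, each atom of $\mathcal O$ is, up to units, either a principal prime or an element $u$ with $(u) = \mathfrak p_1 \mathfrak p_2$ for non-principal primes $\mathfrak p_1, \mathfrak p_2$. Given $a \in \mathcal O$, a factorization $z = u_1 \cdots u_k$ of $a$, and an atom $u \mid a$, the principal-prime case yields $u_i = u$ (up to units) for some $i$ by primality. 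In the non-principal case, either both $\mathfrak p_1, \mathfrak p_2$ divide the same $(u_i)$---forcing $u_i = u$---or there are distinct indices $j, \ell$ with $(u_j) = \mathfrak p_1 \mathfrak p_\alpha$ and $(u_\ell) = \mathfrak p_2 \mathfrak p_\beta$ for non-principal $\mathfrak p_\alpha, \mathfrak p_\beta$, and then $u_j u_\ell = u \cdot u'$ with $(u') = \mathfrak p_\alpha \mathfrak p_\beta$ again an atom; the replacement $z \mapsto z (u_j u_\ell)^{-1} u u'$ has distance exactly $2$. Hence $\mathsf t(\mathcal O) \le 2$.

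The main obstacle is exactly this tame-degree computation: sharpening $\mathsf t(\mathcal O) \le 3$ to $\mathsf t(\mathcal O) \le 2$ requires exploiting the very rigid structure of atoms when $|G|=2$, where every non-prime atom has length two in $\mathcal B(G)$ and can be manufactured by a single swap of non-principal prime content between two factors. All other implications reduce to routine applications of the transfer machinery in \autoref{3.5}--\autoref{3.6}. I note that the quadratic/cubic restriction is inessential for this particular theorem: it is included only to parallel \autoref{4.3}, while the same proof works for a principal order in any algebraic number field.
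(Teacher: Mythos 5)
Your argument is correct, but there is nothing in the paper to compare it against: \autoref{4.4} is not proved here at all --- it is quoted (cf.\ \cite[Theorem~1.7.3.6]{MR2194494}) solely so that the reader can set it beside \autoref{4.3}. What you have written is a sound reconstruction of the standard proof for principal orders, i.e.\ for Krull monoids in which every class contains a prime divisor. The reduction to $\mathcal B(G)$ via \autoref{3.5} with $T=\lbrace 1\rbrace$, the dichotomy $|\pic(\mathcal O)|\geq 3$ (handled by \autoref{3.6}.\ref{3.6.1}) versus $|\pic(\mathcal O)|\leq 2$ (where $\mathcal B(G)$ is free, so $\rho(\mathcal O)=1$ and $\mathsf c(\mathcal O)\leq\max\lbrace 0,\mathsf c(\mathcal O,\beta)\rbrace\leq 2$), and the general inequality $\mathsf c(H)\leq\mathsf t(H)$ together close the cycle except for $(2)\Rightarrow(3)$. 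You correctly identify that this last implication is the only step needing real work, since the transfer estimate of \autoref{3.5}.\ref{3.5.2} only yields $\mathsf t(\mathcal O)\leq\mathsf t(\mathcal B(G))+\mathsf D(G)+1=3$ when $|G|=2$; your direct computation --- every non-prime atom generates a product of exactly two non-principal primes, and a single exchange of non-principal prime content between two factors of $z$ manufactures the prescribed atom at distance $2$ --- is valid, provided you note that the case $(u)=\mathfrak p^2$ with the two copies of $\mathfrak p$ spread over distinct factors $(u_j)$ and $(u_\ell)$ is covered by the same exchange (it is, with $\mathfrak p_\alpha$ or $\mathfrak p_\beta$ possibly equal to $\mathfrak p$). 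Your closing observation is also accurate: the quadratic/cubic hypothesis is irrelevant to this statement and appears only to parallel \autoref{4.3}.
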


By \autoref{4.3}.\ref{4.3.3}, we get an additional bound on the elasticity of a non-principal order $\mathcal O$ in a quadratic or cubic number field such that its conductor is an inert prime ideal, say $(\mathcal O:\overline{\mathcal O})=\mathfrak p\in\mathfrak X(\mathcal O)$ and $\mathfrak p\overline{\mathcal O}\in\mathfrak X(\overline{\mathcal O})$; then $\rho(\mathcal O)\leq\frac{3}{2}$. Now we revisit the example from \cite[example at the end of the publication]{MR1378586}: Let $\mathcal O=\Z[3i]$. Then $\overline{\mathcal O}=\Z[i]$, $|\pic(\mathcal O)|=2$, $\mathcal O$ is locally half-factorial, and $(\mathcal O:\overline{\mathcal O})=3\mathcal O\in\mathfrak X(\mathcal O)$ is an inert prime ideal in $\overline{\mathcal O}$. We set $\beta=1+2i$ and $\beta'=1-2i$. Then $3\beta$, $3\beta'$, $3$, and $5$ are irreducible elements of $\mathcal O$ satisfying $(3\beta)(3\beta')=3^2\cdot 5$; thus $\rho(\mathcal O)\geq\frac{3}{2}$. Now we have equality by \autoref{4.3}.\ref{4.3.3}.

\medskip
\subsection{Localizations of half-factorial orders}

\begin{proposition}
\label{4.5}
Let $D$ be a monoid, $P\subset D$ be a set of prime elements, and let $T\subset D$ be a reduced atomic submonoid such that $D=\mathcal F(P)\times T$. Let $D_1\subset T$ be a divisor-closed submonoid and let $D_1\subset[p]\times\wmal{D_1}=\widehat{D_1}$ be a finitely primary monoid of rank $1$ and exponent $k$. Let $H\subset D$ be a saturated half-factorial submonoid, $G=\mathsf q(D/H)$ its class group, and suppose each class in $G$ contains some $p'\in P$.\\
Then $|G|\leq 2$ and $D_1$ is either
\begin{itemize}
 \item half-factorial or
 \item $|G|=2$, say $G=\lbrace\mathbf 0,g\rbrace$, $\mathsf v_p(\mathcal A(D_1))=\lbrace 1,2\rbrace$, $[p]_{D/H}=g$, and $[\eps]_{D/H}=\mathbf 0$ for all $\eps\in\wmal{D_1}$.
\end{itemize}
\end{proposition}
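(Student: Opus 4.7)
The plan is to proceed in three stages: bound $|G|$, handle the case $|G|=1$, and then analyze the case $|G|=2$ where $D_1$ may fail to be half-factorial.

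The hypotheses place us in the setting of \autoref{3.6}, and \autoref{3.6}.\ref{3.6.1} gives $\rho(H) > 1$ whenever $|G| \geq 3$, contradicting half-factoriality of $H$; hence $|G| \leq 2$. If $|G|=1$, then $H=D$, and since $D_1 \subset T$ is divisor-closed and $T$ is a direct factor of $D$, $D_1$ is divisor-closed in $D = H$. Atoms of $D_1$ are then atoms of $H$, every $H$-factorization of an $a \in D_1$ stays inside $D_1$, and half-factoriality descends from $H$ to $D_1$.

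Now assume $|G|=2$, write $G=\{\mathbf{0}, g\}$, and suppose that $D_1$ is not half-factorial. The crux of the argument is a lifting construction that converts half-factoriality of $H$ into a numerical invariant on $D_1$. Fix a prime $p_g \in P$ with $[p_g]_{D/H} = g$. For each atom $u \in \mathcal{A}(D_1)$ set $\widetilde u := u$ if $[u]_{D/H} = \mathbf{0}$, and $\widetilde u := u p_g$ otherwise. Using that $H$ is saturated in $D$, $p_g$ is prime in $D$, and $D_1$ is divisor-closed, a short check shows that every $\widetilde u$ is an atom of $H$; the same kind of divisor argument shows that $p_g^2$ is an atom of $H$. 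Given any two $D_1$-factorizations $u_1 \cdots u_k = v_1 \cdots v_l$ of some $a \in D_1$, let $k_1$ (resp.\ $l_1$) be the number of factors of class $g$; the class of $a$ equals both $k_1 g$ and $l_1 g$, so $k_1 \equiv l_1 \pmod 2$. Lifting each factorization through $\widetilde{\phantom u}$ and padding the shorter lift by copies of the atom $p_g^2$ yields two factorizations of the common element $a p_g^{\max(k_1, l_1)} \in H$ of lengths $l$ and $k + (l_1 - k_1)/2$ (assuming $l_1 \geq k_1$). Half-factoriality of $H$ then forces
\[
2(l - k) = l_1 - k_1,
\]
or equivalently, the weight $w(z) := 2 k_0(z) + k_1(z)$ (with $k_0(z) = |z| - k_1(z)$) is invariant under refactorization in $D_1$: atoms of class $\mathbf{0}$ carry weight $2$ and atoms of class $g$ carry weight $1$, consistently across all factorizations of every element.

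The claimed structure follows by combining this weight identity with the finitely primary structure. By the contrapositive of \autoref{3.9}.\ref{3.9.1}, $\mathsf v_p(\mathcal{A}(D_1)) \neq \{1\}$, and the inclusion $p^k \widehat{D_1} \subset D_1$ (with $k \geq 2$) yields atoms of many $p$-adic valuations. From the weight identity applied to explicit relations, such as comparing a single $p^n$-atom with an $n$-fold product of $p$-atoms, or squaring atoms to obtain length-$2$ relations, I would eliminate atoms of $p$-adic valuation $\geq 3$, force both $1$ and $2$ to appear in $\mathsf v_p(\mathcal{A}(D_1))$, and pin down $[p]_{D/H} = g$ together with $[\varepsilon]_{D/H} = \mathbf{0}$ for every $\varepsilon \in \widehat{D_1}^\times$. (Were $[p]_{D/H} = \mathbf{0}$, all atoms of $D_1$ would carry the class of their unit part, and the weight identity would push $D_1$ into a half-factorial submonoid along the lines of the $|G|=1$ case, contradicting the hypothesis.) The main obstacle is this final case analysis: the freedom in $\widehat{D_1}^\times$ admits many a priori configurations, and each one must be excluded by exhibiting a specific $D_1$-element with two factorizations whose weights disagree.
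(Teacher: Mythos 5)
Your reduction to $|G|\leq 2$ via \autoref{3.6}.\ref{3.6.1} and your treatment of $|G|=1$ match the paper, and your lifting construction for $|G|=2$ is sound: the maps $u\mapsto u$ (class $\mathbf 0$) and $u\mapsto up_g$ (class $g$) do land in $\mathcal A(H)$ by saturation, primality of $p_g$, and divisor-closedness of $D_1$, and $p_g^2\in\mathcal A(H)$, so half-factoriality of $H$ really does force the weight $w(z)=2k_0(z)+k_1(z)$ to be a factorization invariant on $D_1$. This is an honest (and arguably more transparent) repackaging of what the paper does by passing to the $T$-block monoid $\mathcal B$ and exhibiting explicit relations whose two sides have different numbers of atoms; your weight identity is exactly the uniform principle behind each of those counts.

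However, there is a genuine gap: the entire structural conclusion in the case $|G|=2$ --- that $\mathsf v_p(\mathcal A(D_1))=\lbrace 1,2\rbrace$, that $[p]_{D/H}=g$, and that $[\eps]_{D/H}=\mathbf 0$ for all $\eps\in\wmal{D_1}$ --- is asserted as something you ``would'' derive, and you yourself flag the case analysis as the main obstacle. That case analysis is the bulk of the paper's proof. Concretely, with $n=\min\mathsf v_p(\mathcal A(D_1))$ and $m=\max\mathsf v_p(\mathcal A(D_1))$ and atoms $p^n\eps$, $p^m\eta$, one must split on the four possible values of $([p^n\eps],[p^m\eta])$; the cases where both classes agree fall to a single relation $(p^m\eta)^k=(p^n\eps)^k(p^{(m-n)k}\eps^{-k}\eta^k)$, but the mixed case $[p^n\eps]=g$, $[p^m\eta]=\mathbf 0$ requires separate arguments for $2n<m$, $m<2n$, and $m=2n$ (with careful choices of exponents so that the cofactor lies in $D_1$, using $p^k\widehat{D_1}\subset D_1$), and it is only the sub-case $n=1$, $m=2$ that survives. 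Even there, proving $[\eps]_{D/H}=\mathbf 0$ for \emph{all} $\eps\in\wmal{D_1}$ is not automatic: the paper must take a hypothetical $\gamma\in\wmal{D_1}$ of class $g$, factor some $p^{k'}\gamma\in D_1$ into atoms, and trace where the class $g$ lands to produce a weight violation. None of this is present in your write-up, and your parenthetical only disposes of the sub-case $[p]_{D/H}=\mathbf 0$. So the skeleton is right and coincides with the paper's, but the proof is incomplete precisely where the real work lies.
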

\begin{proof}
Define a homomorphism $\iota:T\rightarrow G$ by $\iota(t)=[t]_{D/H}$. Throughout the proof, we write $\mathcal B=\lbrace S\in\mathcal B(G,T,\iota)\mid\mathbf 0\nmid S\rbrace$ as in \autoref{3.5}.\ref{3.5.4}. If $|G|\geq 3$, then it follows by \autoref{3.6}.\ref{3.6.1} that $H$ is not half-factorial. If $|G|=1$, then $H=D$ and, obviously, the first case of the assertion holds. Now, let $|G|=2$, say $G=\lbrace\mathbf 0,g\rbrace$. Since $H$ is half-factorial, $\mathcal B$ is also half-factorial by \autoref{3.5}. By \autoref{3.12}, $\mathsf v_p(\mathcal A(D_1))=\lbrace 1\rbrace$ is equivalent to $D_1$ being half-factorial. We show that either
\begin{itemize}
 \item $\mathsf v_p(\mathcal A(D_1))=\lbrace 1\rbrace$ or
 \item $\mathsf v_p(\mathcal A(D_1))=\lbrace 1,2\rbrace$, $\iota(p)=g$, and $\iota(\wmal{D_1})=\lbrace\mathbf 0\rbrace$.
\end{itemize}
If $\#\mathsf v_p(\mathcal A(D_1))=1$, i.e., $\mathsf v_p(\mathcal A(D_1))=\lbrace n\rbrace$, then we find $n=1$ since $N_{\geq k}\subset n\N_0$.
Suppose we have $\#\mathsf v_p(\mathcal A(D_1))>1$. Then there are $n=\min\mathsf v_p(\mathcal A(D_1))$ and $m=\max\mathsf v_p(\mathcal A(D_1))$ $>n$. Let $\eps,\,\eta\in\wmal{D_1}$ be such that $p^n\eps,$ $p^m\eta\in\mathcal A(D_1)$. Now we distinguish four cases by $\iota(p^n\eps)$ and $\iota(p^m\eta)$.\\
\textbf{Case 1} $\iota(p^n\eps)=\iota(p^m\eta)=\mathbf 0$. Then $p^n\eps,\,p^m\eta\in\mathcal A(\mathcal B)$, and we find
\[
(p^m\eta)^k=(p^n\eps)^k(p^{(m-n)k}\eta^k\eps^{-k}).
\]
There are $k$ atoms on the left side and at least $k+1$ on the right side; clearly a contradiction to $\mathcal B$ being half-factorial.\\
\textbf{Case 2} $\iota(p^n\eps)=\iota(p^m\eta)=g$. Then $p^n\eps g,\,p^m\eta g\in\mathcal A(\mathcal B)$, and we find
\[
(p^m\eta g)^k=(p^n\eps g)^k(p^{(m-n)k}\eta^k\eps^{-k})
\]
There are $k$ atoms on the left side and at least $k+1$ on the right side; clearly a contradiction to $\mathcal B$ being half-factorial.\\
\textbf{Case 3} $\iota(p^n\eps)=\mathbf 0$ and $\iota(p^m\eta)=g$. Then $p^n\eps,\,p^m\eta g\in\mathcal A(\mathcal B)$, and we find
\[
(p^m\eta g)^k=
\begin{cases}
(p^n\eps)^k(g^2)^{\frac{k}{2}}(p^{(m-n)k}\eta^k\eps^{-k}) & k \text{ even} \\
(p^n\eps)^k(g^2)^{\frac{k-1}{2}}(p^{(m-n)k}\eta^k\eps^{-k}g) & k \text{ odd}.
\end{cases}
\]
There are $k$ atoms on the left side and, in both cases, at least $k+1+\frac{k-1}{2}$ on the right side; clearly a contradiction to $\mathcal B$ being half-factorial.\\
\textbf{Case 4} $\iota(p^n\eps)=g$ and $\iota(p^m\eta)=\mathbf 0$. Then $p^n\eps g,\,p^m\eta\in\mathcal A(\mathcal B)$. Now we must again distinguish four cases.\\
\indent\textbf{Case 4.1} $2n<m$ and $k$ even. Here we find
\[
(g^2)^\frac{k}{2}(p^m\eta)^k=(p^n\eps g)^k(p^{(m-n)k}\eps^{-k}\eta^k).
\]
There are $\frac{5}{2}k$ atoms on the left side and at least $k+\left\lceil\frac{(m-n)k}{m}\right\rceil$ atoms on the right side. This is a contradiction to $\mathcal B$ being half-factorial, since $m>2n$ by assumption.\\
\indent\textbf{Case 4.2} $2n<m$ and $k$ odd. Here we find
\[
(g^2)^\frac{k+1}{2}(p^m\eta)^{k+1}=(p^n\eps g)^{k+1}(p^{(m-n)(k+1)}\eps^{-k-1}\eta^{k+1}).
\]
This leads to a contradiction as in the case where $k$ was even.\\
\indent\textbf{Case 4.3} $m<2n$ and $k$ even. We choose $l\in\N$ maximal with $lm\leq(n-1)k$, and we find
\[
(p^n\eps g)^k=(g^2)^{\frac{k}{2}}(p^{nk-lm}\eps^k\eta^{-l})(p^m\eta)^l.
\]
There are $k$ atoms on the left side and at least $\frac{k}{2}+\left\lceil\frac{nk-lm}{m}\right\rceil+l$ on the right. This is a contradiction to $\mathcal B$ being half-factorial, since $m<2n$ by assumption.\\
\indent\textbf{Case 4.4} $m<2n$ and $k$ odd. We choose $l\in\N$ maximal with $lm\leq(n-1)(k+1)$, and we find a contradiction to $\mathcal B$ being half-factorial by looking at
\[
(p^n\eps g)^{k+1}=(g^2)^{\frac{k+1}{2}}(p^{n(k+1)-lm}\eps^{k+1}\eta^{-l})(p^m\eta)^l.
\]
\indent\textbf{Case 4.5} $m=2n$. In this particular case, we must again handle two additional cases.\\
\indent\textbf{Case 4.5.1} $n>1$. Then there is $n'\in(n,2n)$ and $\gamma\in\wmal D_1$ such that $p^{n'}\gamma\in\mathcal A(D_1)$. If $\iota(p^{n'}\gamma)=\mathbf 0$, then the assertion follows with $p^{n'}\gamma$ and $p^m\eta$ as in Case 1. If $\iota(p^{n'}\gamma)=g$, then the assertion follows with $p^n\eps$ and $p^{n'}\gamma$ as in Case 2.\\
\indent\textbf{Case 4.5.2} $n=1$. Then $m=2n=2$. Without loss of generality we may assume that $p\in D_1$. Furthermore, $\iota(p^2\eta)=\mathbf 0$ implies $\iota(\eta)=\mathbf 0$. For the moment, we assume that $\iota(p)=\mathbf 0$ and $\iota(\eps)=g$. Then we are done by Case 1 with $p$ and $p^2\eta$. If now $\iota(p)=g$ and $\iota(\eps)=\mathbf 0$, we show $\iota(\wmal{D_1})=\lbrace\mathbf 0\rbrace$ or $\mathsf v_p(\mathcal A(D_1))=\lbrace 1\rbrace$. If $\iota(\wmal{D_1})=\lbrace\mathbf 0\rbrace$, then the second case in the assertion is fulfilled. Now suppose $\iota(\wmal{D_1})=G$, say there is some $\gamma\in\wmal{D_1}$ with $\iota(\gamma)=g$. Then there is some $k'\in[1,k]$ such that $p^{k'}\gamma\in D_1$. Thus there are $\eps_1,\ldots,\eps_l,\,\eta_1,\ldots,\eta_{l'}\in\wmal{D_1}$ such that $(p\eps_1)\mdots(p\eps_l)(p^2\eta_1)\mdots(p^2\eta_{l'})=p^{k'}\gamma$ is a factorization of $p^{k'}\gamma$ in $D_1$. Thus $\eps_1\mdots\eps_l\eta_1\mdots\eta_{l'}=\gamma$, and therefore either $\iota(\eps_i)=g$ for some $i\in[1,l]$ or $\iota(\eta_j)=g$ for some $j\in[1,l']$. In the first case, we are in the situation of Case 1 with $p\eps_i$ and $p^2\eta$, and in the second case, we are in the situation of Case 2 with $p$ and $p^2\eta_j$.
\end{proof}

\begin{corollary}
\label{4.6}
Let $\mathcal O$ be a half-factorial order in an algebraic number field $K$, $\mathcal O_K$ is integral closure, and let $\mathfrak p\in\mathfrak X(\mathcal O)$ be a prime ideal of $\mathcal O$ such that $\mathfrak p\supset(\mathcal O:\mathcal O_K)$.\\
Then $|\pic(\mathcal O)|\leq 2$ and $\mathcal O_{\mathfrak p}$ is either
\begin{itemize}
 \item half-factorial, and $\punkt{\mathcal O_{\mathfrak p}}\subset\punkt{(\mathcal O_K)_{\mathfrak p}}$ is a half-factorial monoid of type $(1,k)$ with $k\in\lbrace 1,2\rbrace$, or
 \item $\mathfrak p$ ramifies in $\mathcal O_K$ with ramification degree $2$, i.e. there is some $\overline{\mathfrak p}\in(\mathcal O_K)_{\mathfrak p}$ prime such that $\overline{\mathfrak p}^2\sim\mathfrak p$.
\end{itemize}
In particular, if $K$ is a quadratic number field, then $\mathcal O_{\mathfrak p}$ is half-factorial.
\end{corollary}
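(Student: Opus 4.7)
The plan is to apply \autoref{4.5} to the localization at $\mathfrak p$ after a rank-one reduction.

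First I would set up the standard framework as in the proof of \autoref{3.22} but without assuming local half-factoriality. Using \cite[Theorem 3.7.1]{MR2194494} with $\mathcal{P}^* = \{\mathfrak{q} \in \mathfrak{X}(\mathcal{O}) : \mathfrak{q} \supset (\mathcal{O} : \mathcal{O}_K)\}$, $\mathcal{P} = \mathfrak{X}(\mathcal{O}) \setminus \mathcal{P}^*$, $T = \prod_{\mathfrak{q} \in \mathcal{P}^*}(\punkt{\mathcal{O}_{\mathfrak{q}}})_{\red}$, and $D = \mathcal{F}(\mathcal{P}) \times T$, one sees that $H := \punkt{\mathcal{O}}_{\red} \subset D$ is saturated and cofinal with class group $\pic(\mathcal{O})$; each class meets $\mathcal{P}$ by \cite[Corollary 2.11.16]{MR2194494}; and each factor $(\punkt{\mathcal{O}_{\mathfrak{q}}})_{\red}$ is finitely primary of rank $s_{\mathfrak{q}}$ equal to the number of primes of $\mathcal{O}_K$ above $\mathfrak{q}$. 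Since $H$ is half-factorial, \autoref{3.6}.\ref{3.6.1} immediately forces $|\pic(\mathcal{O})| \leq 2$.

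The central technical step---and the main anticipated obstacle---is to show that $D_1 := (\punkt{\mathcal{O}_{\mathfrak{p}}})_{\red}$, a divisor-closed submonoid of $T$ that is finitely primary of rank $s_{\mathfrak{p}}$, actually has $s_{\mathfrak{p}} = 1$. If $s_{\mathfrak{p}} \geq 2$ then $\rho(D_1) = \infty$ by \cite[Theorem 3.1.5.2(b)]{MR2194494} (as used in \autoref{3.9}.\ref{3.9.1}). Propagating this via the $T$-block transfer works because $|\pic(\mathcal{O})| \leq 2$: the atoms of $\mathcal{B}(\pic(\mathcal{O}), T, \iota)$ appearing in any factorization of $a \in D_1 \cap H$ are either atoms of $D_1$ with trivial class or pairs of atoms of $D_1$ with class $g$, so the $\mathcal{B}$-length of $a$ differs from its $D_1$-length by at most a factor of two. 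Selecting $a \in D_1$ with $\iota(a) = \mathbf{0}$ and $D_1$-factorizations of divergent length ratio (obtained from $\rho(D_1) = \infty$, passing to $a^2$ if necessary) therefore yields $\mathcal{B}$-factorizations of divergent ratio, forcing $\rho(H) = \rho(\mathcal{B}(\pic(\mathcal{O}), T, \iota)) = \infty$ by \autoref{3.5}.\ref{3.5.3}, contradicting half-factoriality of $H$. Once $s_{\mathfrak{p}} = 1$ is established, \autoref{4.5} applies to $D_1$: the first alternative combined with \autoref{3.9}.\ref{3.9.1} gives $D_1$ of type $(1,k)$, and \cite[Corollary 3.5]{MR2140704} (cited in \autoref{3.22}) constrains $k \in \{1,2\}$, yielding the first bullet. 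The second alternative $\mathsf{v}_p(\mathcal{A}(D_1)) = \{1,2\}$ furnishes an atom of $\overline{\mathfrak{p}}$-adic valuation $2$ in $\mathcal{O}_{\mathfrak{p}}$; since $s_{\mathfrak{p}} = 1$ makes $\overline{\mathcal{O}_{\mathfrak{p}}} = (\mathcal{O}_K)_{\mathfrak{p}}$ a DVR with unique prime $\overline{\mathfrak{p}}$, this identifies $\mathfrak{p}(\mathcal{O}_K)_{\mathfrak{p}} = \overline{\mathfrak{p}}^2$, giving the second bullet.

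For the additional claim in the quadratic case, I would exclude the second alternative. Its hypothesis provides an atom $p\eps \in \mathcal{O}_{\mathfrak{p}}$ of $\overline{\mathfrak{p}}$-adic valuation one, i.e., a uniformizer $\pi$ of $\overline{\mathcal{O}_{\mathfrak{p}}}$ lying in $\mathcal{O}_{\mathfrak{p}}$. In the ramified quadratic situation one may write $\overline{\mathcal{O}_{\mathfrak{p}}} = \Z_p[\pi]$ with $\pi^2 \in p\Z_p^{\times}$, so $\Z_p \subset \mathcal{O}_{\mathfrak{p}}$ together with $\pi \in \mathcal{O}_{\mathfrak{p}}$ forces $\mathcal{O}_{\mathfrak{p}} \supset \Z_p + \pi\Z_p = \overline{\mathcal{O}_{\mathfrak{p}}}$, contradicting $\mathfrak{p} \supset (\mathcal{O} : \mathcal{O}_K)$---which would force $\mathcal{O}_{\mathfrak{p}} \subsetneq \overline{\mathcal{O}_{\mathfrak{p}}}$. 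Hence only the first alternative occurs and $\mathcal{O}_{\mathfrak{p}}$ is half-factorial.
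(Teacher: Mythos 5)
Your overall architecture coincides with the paper's: decompose via \cite[Theorem 3.7.1]{MR2194494} as in \autoref{3.22}, obtain $|\pic(\mathcal O)|\leq 2$ from \autoref{3.6}.\ref{3.6.1}, and feed the localization into \autoref{4.5}. The two places where you substitute your own argument for a citation are exactly where gaps appear. First, the rank-one reduction. The paper gets it in one line from \cite[Corollary 4.i]{MR1378586}: $\rho(\mathcal O)=1<\infty$ forces $\mathfrak p$ not to split, so $(\mathcal O_K)_{\mathfrak p}$ is a discrete valuation domain and $\punkt{\mathcal O_{\mathfrak p}}$ is finitely primary of rank one. Your replacement hinges on the claim that the $\mathcal B$-length and the $D_1$-length of $a$ differ by at most a factor of two. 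The refinement map (each atom of $\mathcal B(\pic(\mathcal O),T,\iota)$ dividing $a$ is a product of at most two atoms of $D_1$) does give $\sup\mathsf L_{\mathcal B}(a)\geq\tfrac12\sup\mathsf L_{D_1}(a)$ and $\min\mathsf L_{\mathcal B}(a)\geq\tfrac12\min\mathsf L_{D_1}(a)$, but it gives \emph{no upper bound} on $\min\mathsf L_{\mathcal B}(a)$: a product of two class-$g$ atoms of $D_1$ lies in $\mathcal B$, yet nothing in your sketch shows it admits a factorization into few atoms of $\mathcal B$ when $D_1$ has rank $\geq 2$, so a short $D_1$-factorization of $a$ does not visibly yield a short $\mathcal B$-factorization, and a divergent $D_1$-ratio does not transfer. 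The repair is available in the paper: $\rho(H,D)<\infty$ from \autoref{3.13}.\ref{3.13.2} gives $\min\mathsf L_H(a)\leq\rho(H,D)\min\mathsf L_D(a)$, which closes the gap; or one simply cites \cite{MR1378586} as the paper does.

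Second, the quadratic conclusion. The paper cites the proof of \cite[Theorem 3.7.15]{MR2194494} to see that $\mathfrak p$ must be inert, whence $\mathcal O_{\mathfrak p}$ is half-factorial. You instead rule out the \emph{conjunction} of the second alternative of \autoref{4.5} with ramification, by producing a uniformizer $\pi\in\mathcal O_{\mathfrak p}$ and concluding $\mathcal O_{\mathfrak p}\supset\Z_{(p)}+\pi\Z_{(p)}=(\mathcal O_K)_{\mathfrak p}$ (write $\Z_{(p)}$, not $\Z_p$; the Nakayama-type identification is otherwise fine). But to conclude that $\mathcal O_{\mathfrak p}$ is half-factorial you must exclude the second alternative altogether, and your bridge ``an atom of valuation $2$ identifies $\mathfrak p(\mathcal O_K)_{\mathfrak p}=\overline{\mathfrak p}^2$'' does not hold: the second alternative also supplies atoms of valuation $1$, which already lie in $\mathfrak p$, so $\mathfrak p(\mathcal O_K)_{\mathfrak p}=\overline{\mathfrak p}$; moreover an \emph{inert} prime with $\mathfrak p^2\supset(\mathcal O:\mathcal O_K)$ likewise has $\mathsf v_p(\mathcal A(D_1))=\lbrace 1,2\rbrace$ (take $p^2\eta$ with $\eta\notin(U_{1,p}(\mathcal O_{\mathfrak p}))^2$), so your ramified-case contradiction never engages it. Eliminating that case requires the class conditions of \autoref{4.5} together with a global argument, which is what the citation to the proof of \cite[Theorem 3.7.15]{MR2194494} supplies. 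The remaining steps (type $(1,k)$ with $k\in\lbrace 1,2\rbrace$ via \autoref{3.9}.\ref{3.9.1} and \cite[Corollary 3.5]{MR2140704}) match \autoref{3.22} and are correct.
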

\begin{proof}
Let $\mathcal O$ be a half-factorial order in an algebraic number field $K$, let $\mathcal O_K$ be its integral closure, let $\mathcal P=\lbrace\mathfrak p\in\mathfrak X(\mathcal O)\mid\mathfrak p\not\supset(\mathcal O:\mathcal O_K)\rbrace$, and let $\mathcal P^*=\lbrace\mathfrak p\in\mathfrak X(\mathcal O)\mid\mathfrak p\supset(\mathcal O:\mathcal O_K)\rbrace$. By \cite[Theorem 3.7.1]{MR2194494}, we find that
\[
\punkt{\mathcal O_{\red}}\subset\mathcal F(\mathcal P)\times T
\quad\mbox{with }T=\prod_{\mathfrak p\in\mathcal P^*}(\punkt{\mathcal O_{\mathfrak p}})_{\red}
\]
is a saturated cofinal submonoid with class group $\pic(\mathcal O)$. Now, we obtain $|\pic(\mathcal O)|\leq 2$ by \autoref{3.6}.\ref{3.6.1}. Since $\mathcal O$ is half-factorial, i.e., $\rho(\mathcal O)=1<\infty$, we find, by \cite[Corollary 4.i]{MR1378586}, that $\mathfrak p$ does not split in $\mathcal O_K$. Thus $(\mathcal O_K)_{\mathfrak p}$ is a discrete valuation domain, in particular, it is local, and thus $\punkt{\mathcal O_{\mathfrak p}}\subset\punkt{(\mathcal O_K)_{\mathfrak p}}$ is a finitely primary monoid of rank $1$. Since $(\punkt{\mathcal O_{\mathfrak p}})_{\red}\subset T$ is a divisor-closed submonoid, the assertion follows immediately by \autoref{4.5}.\\
If $K$ is a quadratic number field, then $\mathcal O$ being half-factorial implies that $\mathfrak p$ is inert by \cite[First paragraph from the Proof of A2 in the Proof of Theorem 3.7.15]{MR2194494}, and therefore $\mathcal O_{\mathfrak p}$ is half-factorial.
\end{proof}

\medskip
\subsection{Characterization of half-factorial orders in quadratic number fields}

\begin{corollary}
\label{4.7}
Let $\mathcal O$ be a non-principal order in a quadratic number field $K$, let $\mathcal O_K$ be its integral closure, and let $\mathcal P^*=\lbrace\mathfrak p\in\mathfrak X(\mathcal O)\mid\mathfrak p\supset(\mathcal O:\mathcal O_K)\rbrace$.\\
Then the following are equivalent:
\begin{enumerate}
 \item\label{4.7.1} $\mathcal O$ is half-factorial.
 \item\label{4.7.2} $\mathsf c(\mathcal O)=2$.
 \item\label{4.7.3} $|\pic(\mathcal O)|\leq 2$, $\mathcal O$ is locally half-factorial and, for all $\mathfrak p\in\mathcal P^*$, $[\mal{(\mathcal O_K)_{\mathfrak p}}/\mal{\mathcal O_{\mathfrak p}}]_{\pic(\mathcal O)}=[\mathbf 0]_{\pic(\mathcal O)}$.
 \item\label{4.7.4} $|\pic(\mathcal O)|\leq 2$ and, for all $\mathfrak p\in\mathcal P^*$,
\begin{itemize}
 \item $[\mal{(\mathcal O_K)_{\mathfrak p}}/\mal{\mathcal O_{\mathfrak p}}]_{\pic(\mathcal O)}=[\mathbf 0]_{\pic(\mathcal O)}$,
 \item $\mathfrak p$ is inert in $\mathcal O_K$, and
 \item $\mathfrak p^2\not\supset(\mathcal O:\mathcal O_K)$.
\end{itemize}
\end{enumerate}
\end{corollary}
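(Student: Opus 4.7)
The strategy is to reduce the statement to previously established results, exploiting the special structural input \autoref{4.2} provides in the quadratic case. The equivalences among (1), (2), and (3) are essentially \autoref{4.3} restated in the quadratic setting, so the substantive new content lies in the equivalence with (4)---which recovers the classical Halter--Koch criterion---together with the additional $\mathsf t(\mathcal O)=2$ equivalence.

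For (1) $\Rightarrow$ (3) I would invoke \autoref{4.6} to extract $|\pic(\mathcal O)|\le 2$ and local half-factoriality from (1); by \autoref{4.2}.\ref{4.2.2}, each $\punkt{\mathcal O_{\mathfrak p}}\subset\punkt{\overline{\mathcal O_{\mathfrak p}}}$ is then finitely primary of exponent $1$, so the additional statement of \autoref{main}.\ref{main3} applies and the formula $\mathsf c(\mathcal O)=2+\min\{2,k\}$ forces $k=0$, which is precisely the class condition in (3). The reverse implication (3) $\Rightarrow$ (1) reads the same formula the other way: non-principality combined with $|\pic(\mathcal O)|\le 2$ gives $|\pic(\mathcal O)|=2$, local half-factoriality plus \autoref{4.2}.\ref{4.2.2} again supplies exponent $1$, and $k=0$ forces $\mathsf c(\mathcal O)=2$, whence half-factoriality by the same part of the main theorem. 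The equivalence (1) $\Leftrightarrow$ (2) is then part of the same application; (2) $\Rightarrow$ (1) also follows at once from the general principle that the bound $\mathsf c(H)\le 2$ forces half-factoriality in any atomic monoid, since a distance-$2$ step between two factorizations can never change their length.

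For (3) $\Leftrightarrow$ (4) I would translate monoid-theoretic into ideal-theoretic conditions at each $\mathfrak p\in\mathcal P^*$. Half-factoriality of $\mathcal O_{\mathfrak p}$ forces $\overline{\mathcal O_{\mathfrak p}}$ to be local by \autoref{4.2}.\ref{4.2.1} (so $\mathfrak p$ does not split), and the ramified case is excluded exactly as in the last sentence of \autoref{4.6} via \cite[Theorem 3.7.15]{MR2194494}; hence $\mathfrak p$ is inert. \autoref{4.2}.\ref{4.2.2} yields exponent $1$, which---using the identification of this exponent with the $\overline{\mathfrak p}$-adic valuation of the local conductor, together with $\mathfrak p\overline{\mathcal O_{\mathfrak p}}=\overline{\mathfrak p}\,\overline{\mathcal O_{\mathfrak p}}$ in the inert case---translates to $\mathfrak p^2\not\supset(\mathcal O:\mathcal O_K)$. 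The reverse direction runs the dictionary backwards: inertness gives rank $1$, the conductor condition gives exponent $1$, and a rank-$1$ exponent-$1$ finitely primary monoid has $\mathsf v_p(\mathcal A)=\{1\}$ (by the argument used for \ref{3.9.1.3}$\,\Rightarrow\,$\ref{3.9.1.2} in \autoref{3.9}.\ref{3.9.1}), hence is half-factorial. Finally, the $\mathsf t(\mathcal O)=2$ equivalence follows from \autoref{3.18}.\ref{3.18.6} once we observe that in the quadratic case with every $\mathfrak p\in\mathcal P^*$ inert of exponent $1$, the uniformizer of the DVR $\overline{\mathcal O_{\mathfrak p}}$ represents a principal class in $\pic(\mathcal O)$, so the hypothesis $[p_i]_{D/H}=\mathbf 0$ of that proposition is automatic.

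The principal technical obstacle in this plan is the precise dictionary linking the monoid-theoretic exponent of $\punkt{\mathcal O_{\mathfrak p}}$ to the $\mathfrak p$-adic behavior of the conductor $(\mathcal O:\mathcal O_K)$, together with the analogous verification that the uniformizer's class in $\pic(\mathcal O)$ is trivial in the quadratic setting; once these translations are in hand, every other step reduces to invoking results already established in the paper.
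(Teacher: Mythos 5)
Your overall route is the same as the paper's: derive (1)$\Rightarrow$(3) from \autoref{4.6} together with the machinery behind \autoref{4.3} and \autoref{main}.\ref{main3} (via \autoref{4.2}.\ref{4.2.2} and \autoref{3.18}), obtain (1)$\Leftrightarrow$(2) from the same source, and settle (3)$\Leftrightarrow$(4) by the local dictionary at each $\mathfrak p\in\mathcal P^*$. The paper likewise leaves that dictionary at the level of the assertion that $\mathcal O_{\mathfrak p}$ is half-factorial if and only if $\mathfrak p$ is inert and $\mathfrak p^2\not\supset(\mathcal O:\mathcal O_K)$, so you are no worse off than the original there. Note, however, that the statement of \autoref{4.7} contains no tame-degree condition; the $\mathsf t(\mathcal O)=2$ equivalence belongs to \autoref{4.3}, so that portion of your plan is superfluous here.

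There is one genuine error: you claim that ``non-principality combined with $|\pic(\mathcal O)|\le 2$ gives $|\pic(\mathcal O)|=2$.'' This is false. ``Non-principal'' means $\mathcal O\neq\mathcal O_K$, not that the Picard group is nontrivial --- the paper's own \autoref{main}.\ref{main1} and \autoref{4.3}.\ref{4.3.1} are devoted precisely to non-principal orders with $|\pic(\mathcal O)|=1$, and such orders exist (e.g.\ $\Z[\sqrt{-3}]$). Since the formula $\mathsf c(\mathcal O)=2+\min\lbrace 2,k\rbrace$ lives in \autoref{main}.\ref{main3} and presupposes $|\pic(\mathcal O)|=2$, your arguments for (3)$\Rightarrow$(1) and for (1)$\Rightarrow$(2) do not cover the case $|\pic(\mathcal O)|=1$. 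That case must be handled separately, as the paper does in \autoref{4.3}: there $\punkt{\mathcal O}_{\red}\cong\mathcal I^*(\mathcal O)$, so local half-factoriality gives half-factoriality at once, and $\mathsf c(\mathcal O)=2$ then follows from \autoref{3.15}.\ref{3.15.2} together with \autoref{3.9}.\ref{3.9.2}. The patch is routine, but as written the step fails.
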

\begin{proof}
\textbf{\ref{4.7.1} $\Leftrightarrow$ \ref{4.7.2}}
By \autoref{4.6}, we reason $\mathcal I^*(\mathcal O)$ is half-factorial. Thus the assertion is already shown in the additional statement of \autoref{4.3}.\\
\textbf{\ref{4.7.1} $\Rightarrow$ \ref{4.7.3}}
By \autoref{4.6}, $|\pic(\mathcal O)|\leq 2$ and $\mathcal O_\mathfrak p$ is half-factorial for all $\mathfrak p\in\mathcal P^*$. We get $[\mal{(\mathcal O_K)_{\mathfrak p}}/\mal{\mathcal O_{\mathfrak p}}]_{\pic(\mathcal O)}=[\mathbf 0]_{\pic(\mathcal O)}$ by the same construction as in the proof of \autoref{4.3} and \autoref{main} using \autoref{3.18}.\\
\textbf{\ref{4.7.3} $\Rightarrow$ \ref{4.7.1}}
Since, by assumption, $\mathcal O$ is locally half-factorial, this implication follows, directly, by the same construction as in the proof of \autoref{4.3} and \autoref{main} using \autoref{3.18}.\\
\textbf{\ref{4.7.3} $\Leftrightarrow$ \ref{4.7.4}}
Since, for all $\mathfrak p\in\mathcal P^*$, $\mathcal O_{\mathfrak p}$ is half-factorial if and only if $\mathfrak p$ is inert in $\mathcal O_K$ and $\mathfrak p^2\not\supset(\mathcal O:\mathcal O_K)$, the assertion follows.
\end{proof}

\bigskip
\bibliographystyle{plain}

\end{document}